\documentclass[a4paper,10pt]{article}
\usepackage{amsmath,amsfonts,amssymb,amsthm}
\usepackage{graphicx,subfigure,booktabs,multirow}
\usepackage{mathrsfs,enumerate,color,bm}%,mathdots
\usepackage{algorithm,algpseudocode}
\usepackage[textwidth=14cm]{geometry}
\usepackage{color}

\usepackage{hyperref}

\newtheorem{theorem}{Theorem}[section]
\newtheorem{lemma}[theorem]{Lemma}
\newtheorem{proposition}[theorem]{Proposition}

\newtheorem{definition}{Definition}[section]

\newcommand{\R}{\mathbb{R}}

\newcommand{\Qmn}{\mathbb{H}^{n_1\times n_2}}

 % Use Input in the format of Algorithm
 % Use Output in the format of Algorithm

\newcommand{\rank}{\operatorname{rank}}

\newcommand{\re}{\operatorname{Re}}
\newcommand{\diag}{\operatorname{diag}}

\newcommand{\spn}{\operatorname{span}}

\graphicspath{{fig/}}

\title{Low Rank Pure Quaternion Approximation for Pure Quaternion Matrices}
\author{
Guangjing Song\thanks{School of Mathematics and Information Sciences, Weifang University, Weifang 261061, P.R. China. Email: {\tt sgjshu@163.com}}
\and
Weiyang Ding\thanks{
Institute of Science and Technology for Brain-Inspired Intelligence, Fudan University,
Shanghai 200433, P.R. China. Research supported in part by NSFC Grant 11801479 and HKRGC GRF 12301619. Email: {\tt
weiyang.ding@gmail.com}}
\and
Michael K. Ng\thanks{The Corresponding Author. Department of Mathematics, The University of Hong Kong, Pokfulam, Hong Kong. Research supported in part by the HKRGC GRF 
12200317, 12300218, 12300519 and 17201020. E-mail: {\tt mng@maths.hku.hk}}
}

\begin{document}
\maketitle

\begin{abstract}
Quaternion matrices are employed successfully in many color image processing applications.
In particular, a pure 
quaternion matrix can be used to represent red, green and blue channels of color images.
A low-rank approximation for a 
pure quaternion matrix can be obtained by using the quaternion singular value decomposition.
However, this approximation is not optimal in the sense that the resulting low-rank approximation matrix may not be pure quaternion, i.e.,
the low-rank matrix contains real component which is not useful for the representation of a color image.
The main contribution of this paper is to find an optimal rank-$r$ 
pure quaternion matrix approximation for a 
pure quaternion matrix (a color image).
Our idea is to use
a projection on a low-rank quaternion matrix manifold and a projection on a quaternion matrix with zero real component, and develop
an alternating projections algorithm to find such optimal low-rank 
pure quaternion matrix approximation.
The convergence of the projection algorithm
can be established by showing that the low-rank quaternion matrix manifold and the zero real component quaternion matrix manifold
has a non-trivial intersection point.
Numerical examples on synthetic 
pure quaternion matrices and color images
are presented to illustrate the projection algorithm can find optimal low-rank 
pure quaternion approximation
for pure quaternion matrices or color images.
\end{abstract}

\vspace{3mm}
\noindent
Keywords: Color images, pure quaternion matrices, low-rank approximation,  manifolds

\section{Introduction}

The RGB color model is one of the most commonly applied additive color models.
Each pixel using the RGB color model consists of three channels, i.e., red (R), green (G), and blue(B), which can be encoded on the three imaginary parts of a quaternion.
The quaternion representation for color images has been proposed and widely employed in the literature \cite{chang2003quaternion,ell2006hypercomplex,han2013color,le2004singular,pei2001efficient,sangwine1996fourier,subakan2011quaternion}.
It is a main advantage of the quaternion approach that color images can be studied and processed holistically as a vector field \cite{ell2006hypercomplex,sangwine1996fourier,subakan2011quaternion}.
Tools and methods for gray-scale image processing are extended to the color image processing field via the quaternion algebra theory and computation, such as the matrix completion \cite{jia2019robust}, the Fourier transform \cite{pei2001efficient,sangwine1996fourier}, the wavelet transform \cite{fletcher2017development}, the principal component analysis \cite{zeng2016color}, and the dictionary learning algorithms \cite{barthelemy2015color,xu2015vector,yu2013quaternion}.

In this paper, we also employ the quaternion matrix representation for an RGB color image and target at the optimal rank-$r$ approximation that should also represent an RGB color image.
Note that the representation quaternion matrix should be pure quaternion, 
i.e., admitting a zero real part, since we assign the red, green, and blue channels to the three imaginary parts respectively.
Without the pure quaternion restriction, a low-rank approximation of any quaternion matrix can be obtained by the quaternion singular value decomposition (QSVD) \cite{zhang1997quaternions}, i.e., the quaternion counterpart for the Eckart-Young-Mirsky theorem.
Neverthelss, this low-rank approximation may not be optimal in the sense that the approximation matrix is not necessarily a pure quaternion matrix.
That is, the approximation matrix contains real component information which is useless for a color image.
We take a 2-by-2 pure quaternion matrix for illustration. Denote
\begin{eqnarray}\label{ex1}
{\bf A}  = \left(
  \begin{array}{cc}
    1 & 0  \\
    0 & 0  \\
  \end{array}
\right) {\bf i} +\left(
                   \begin{array}{cc}
                     0 & 1  \\
                     0 & 1  \\
                   \end{array}
                 \right){\bf j}+\left(
                                  \begin{array}{cc}
                                    0 & 0  \\
                                    1 & 0  \\
                                  \end{array}
                                \right){\bf k},
\end{eqnarray}
where ${\bf i}$, ${\bf j}$ and ${\bf k}$ are three imaginary units in the quaternion algebra.
 By applying the QSVD on ${\bf A},$  we can calculate an optimal rank-$1$ approximation as
$$
\tilde{{\bf A}} = \left(
              \begin{array}{cc}
                0 & -0.35 \\
                0 & 0.35  \\
              \end{array}
            \right)+\left(
                      \begin{array}{cc}
                        0.85 & 0  \\
                        0.35 & 0  \\
                      \end{array}
                    \right){\bf i}
                    +\left(
                                     \begin{array}{cc}
                                       0 & -0.85 \\
                                       0 & -0.85  \\
                                     \end{array}
                                   \right){\bf j}+\left(
                                                    \begin{array}{cc}
                                                      0.35 & 0  \\
                                                      0.85 & 0  \\
                                                    \end{array}
                                                  \right){\bf k}.
$$
Obviously, the approximation matrix $\tilde{{\bf A}}$ contains real components.
 In this case,  the pure quaternion part of $\tilde{{\bf A}}$
$$
\tilde{{\bf A}}_{p}=\left(
                      \begin{array}{cc}
                        0.85 & 0  \\
                        0.35 & 0  \\
                      \end{array}
                    \right){\bf i}
                    +\left(
                                     \begin{array}{cc}
                                       0 & -0.85 \\
                                       0 & -0.85  \\
                                     \end{array}
                                   \right){\bf j}+\left(
                                                    \begin{array}{cc}
                                                      0.35 & 0  \\
                                                      0.85 & 0  \\
                                                    \end{array}
                                                  \right){\bf k}
$$
is usually chosen as the  
pure quaternion approximation of ${\bf A}$. However,
$\textrm{rank} \left(\tilde{{\bf A}}_{p}\right)=2,$
which is saying that $\tilde{{\bf A}}_{p}$ is not an optimal rank-$1$ 
pure quaternion approximation of ${\bf A}$.

In color image processing, we are interested in finding an optimal fixed-rank 
pure quaternion approximation of a color image.
Mathematically, it can be formulated as the following optimization problem
\begin{equation}\label{pmain}
\begin{array}{cl}
  \min & \| {\bf A}- {\bf X}\|_{\textrm{F}}^{2}, \\
  {\rm s.t.} & \rank({\bf X})=r, \\
             & \re({\bf X})=0,
\end{array}
\end{equation}
where ${\bf A}$ is a given 
pure quaternion matrix and $\re({\bf X})$ denotes the real part of ${\bf X}$.
In the literature, there are several algorithms \cite{jia2013new,jia2017color,le2004singular,li2014fast,li2016real}
for computing the eigenvalues as well as the singular values of quaternion matrices.
To our best knowledge, this is the first attempt to study the low-rank 
pure quaternion approximation problem.

\subsection{The Contribution}
 Alternating projection method finds a point in the intersection of two closed convex sets by iteratively projecting a point first onto one set and then the other.  It has a long history which can be traced back to John Von Neumann \cite{von1950functional}, where the alternating projections between two closed subspaces of a Hilbert space is guaranteed to globally converge to a intersection point of the two subspaces, if they intersect non-trivially.   Alternating projection method  has been used in a wide range of some classical mathematics problems as well as engineering applications because of it is a gradient-free method (without requiring tuning the step size) and usually has fast speed of convergence. See, e.g., convex feasibility problem \cite{bauschke1996projection}, signal processing \cite{combettes1993signal},  finance \cite{higham2002computing}, machine learning \cite{widrow1987adaptive},  an so on (\cite{adamyan1968general,grigoriadis1994application,hamaker1978angles,kayalar1988error,lee1997conformal,levi1983signal,tanabe1971projection,youla1978generalized} and references therein).
% In 1996, Bauschke and Borwein \cite{bauschke1996projection} surveyed  its history and  broad application background. Nowadays, alternating projections method with its extensions (find points in the intersections of several sets) have been widely applied in many classical mathematics problems as well as some application fields, such as signal processing \cite{combettes1993signal}, finance \cite{higham2002computing},  machine learning \cite{widrow1987adaptive}and so on (some other results can be found in  \cite{adamyan1968general,grigoriadis1994application,hamaker1978angles,kayalar1988error,lee1997conformal,levi1983signal,tanabe1971projection,youla1978generalized} and references therein).

By contrast,  alternating projection method on nonconvex sets are remained rather undeveloped. Cadzow \cite{cadzow1988signal} showed the convergence of an alternating projections scheme by the Zangwill's global convergence theorem. Lewis and Malich \cite{lewis2008alternating} further showed that alternating projection method converges locally at a linear rate when two manifolds intersect transversally.
Later, Fredrik and Marcus \cite{andersson2013alternating} generalized  the ``transversally'' intersecting condition as the ``nontangentially'' intersecting condition.

%Alternating projections method  were developed and employed in many mathematical problems, see for instance \cite{}.
%von Neumann \cite{von1950functional} first proved the convergence of the alternating projection method for convex sets.
%Combettes and Trussell \cite{combettes1990method} investigated the case of non-convex sets and proved its convergence.

In this paper, we consider the alternating projection method on two manifolds: one is a fixed-rank quaternion matrix manifold and the other is a zero real component quaternion matrix manifold.
The convergence results for  the alternating projections algorithm on these two manifolds are derived. Furthermore, we propose an initialization strategy to make the alternating projection method more practical and reliable.
Numerical examples on synthetic pure quaternion matrices and color images are presented to illustrate the performances of the proposed algorithm.
%We show that the low-rank quaternion matrix manifold and the zero real component quaternion matrix manifold
%has a non-trivial intersection point. The convergence of the alternating projections method
%can be established and its limit point can be close to an optimal low-rank pure quaternion
%solution in (\ref{pmain}).
%Numerical examples on synthetic pure quaternion matrices and color images
%are presented to illustrate the projection algorithm can find optimal low-rank pure quaternion approximation
%for pure quaternion matrices or color images.

The rest of this paper is organized as follows.
In Section \ref{sec:pre}, we summarize some notations used throughout this paper.
The preliminaries for quaternion matrix and  manifolds theory are presented.
We also study the intersection of the low-rank quaternion matrix manifold and the zero real component quaternion matrix manifold.
In Section \ref{sec:main}, an alternating projections algorithm is given and proved to linearly converge to a good approximation.
We further propose an initialization strategy in Section \ref{sec:initia} to make the alternating projection method
more practical and reliable.
In Section \ref{sec:exper}, we conduct some numerical experiments to demonstrate our theoretical results.
Some concluding remarks are given in Section \ref{sec:con}.

\section{Mathematical Preliminaries}
%of Quaternion and Manifold}
\label{sec:pre}

%{\Huge, you use bold face for quaternion}

\subsection{Quaternion and Quaternion Matrix}

%Quaternion was introduced by Hamilton \cite{Hamilton1866}.
A quaternion number consists of one real part and three imaginary parts given by
$$
{\bf q} = q_r + q_i {\bf i} + q_j {\bf j} + q_k {\bf k}
$$
where $q_r, q_i, q_j, q_k \in \mathbb{R}$ and ${\bf i}, {\bf j}, {\bf k}$ are three imaginary units.
Throughout this paper, any boldface symbol indicates a quaternion number, vector, or matrix.
We use $\mathbb{H}$ to denote the quaternion algebra.
The quaternion ${\bf q}$ is called a pure quaternion if it has a zero real part, i.e., $q_r = 0$.
The conjugate and modulus of ${\bf q}$ are respectively defined by
$$
{\bf q}^* = q_r - q_i {\bf i} - q_j {\bf j} - q_k {\bf k} \quad {\rm and} \quad
| {\bf q} | = \sqrt{ q_r^2 + q_i^2 + q_j^2 + q_k^2}.
$$
Quaternions form a division algebra when equipped with the componentwise addition, the componentwise scalar multiplication over $\mathbb{R}$, and the Hamilton product given by
\begin{itemize}
  \item ${\bf i} \cdot 1 = 1 \cdot {\bf i} = {\bf i}$, ${\bf j} \cdot 1 = 1 \cdot {\bf j} = {\bf j}$, ${\bf k} \cdot 1 = 1 \cdot {\bf k} = {\bf k}$;
  \item ${\bf i}^2 = {\bf j}^2 = {\bf k}^2 = -1$, ${\bf i} {\bf j} = - {\bf j} {\bf i} = {\bf k}$, ${\bf j} {\bf k} = - {\bf k} {\bf j} = {\bf i}$, ${\bf i} {\bf k} = - {\bf k} {\bf i} = {\bf j}$.
\end{itemize}
Note that the quaternion multiplication is noncommutative, i.e., ${\bf p}{\bf q}$ may not equal ${\bf q}{\bf p}$  for all ${\bf p},{\bf q}\in \mathbb{H}$.
%Note that quaternion matrix-matrix multiplications can be defined similar
%to classical matrix-matrix multiplication except the multiplication between two quaternion numbers are employed.

A quaternion matrix is represented by ${\bf A}=A_{0}+A_{1}{\bf i} +A_{2}{\bf j} +A_{3}{\bf k} \in \mathbb{H}^{m\times n}$ and the real part of ${\bf A}$ is denoted by $\re({\bf A})=A_{0}$.
The identity quaternion matrix ${\bf I}$ is the same as the classical identity matrix.
The inverse ${\bf B}$ of a quaternion matrix ${\bf A}$ exists if ${\bf A} {\bf B} = {\bf B} {\bf A} = {\bf I}$.
A quaternion matrix is unitary if ${\bf A}^* {\bf A} = {\bf A} {\bf A}^* = {\bf I}$, where ${\bf A}^*$ is the conjugate transpose of ${\bf A}$.
An alternative approach to handle an $ m\times n $ quaternion matrix is to consider the subset of the ring ${\mathbb R}^{4m\times 4n}$:
\begin{align*}
\Gamma:=\left\{X=
            \left(\begin{array}{cccc}
             A_{0} & -A_{1} & A_{2} & -A_{3} \\
             A_{1} & A_{0} &-A_{3} & A_{2} \\
             -A_{2} & A_{3}& A_{0} & -A_{1} \\
             A_{3} & -A_{2} & A_{1} & A_{0} \\
           \end{array}\right)
\in \mathbb{R}^{4m\times 4n}:\ A_0,A_1,A_2,A_3\in \mathbb{R}^{m\times n}\right\}.
\end{align*}
Inheriting the operations on ${\mathbb R}^{4m\times 4n}$, $\Gamma$ actually forms a subring.
Define a mapping $\phi$ as
\begin{align} \label{realq}
  \phi:~{\bf A}=A_{0}+A_{1}{\bf i} +A_{2}{\bf j} +A_{3}{\bf k}\in \mathbb{H}^{m\times n} \mapsto \hat{\bf A}=\left(
           \begin{array}{cccc}
             A_{0} & -A_{1} & A_{2} & -A_{3} \\
             A_{1} & A_{0} &-A_{3} & A_{2} \\
             -A_{2} & A_{3}& A_{0} & -A_{1} \\
             A_{3} & -A_{2} & A_{1} & A_{0} \\
           \end{array}
         \right)\in \Gamma.
\end{align}
 Then  $\phi$ is a bijection and preserves the operations, which guarantees that the quaternion matrix set $\mathbb{H}^{m\times n}$ and the real matrix set $\Gamma$ are
essentially the same (Remark 2.1 in \cite{zhang1997quaternions}). In addition, $\hat{{\bf A}}$ in \eqref{realq} is called the real representation of ${\bf A}$.

Due to the non-commutative nature of quaternion multiplication, the linear independence of a set of vectors  over $\mathbb{H}^{n}$ can be defined as right  and left linear independence, respectively.
${\bf v}_{1},..,{\bf v}_{n} \in \mathbb{H}^{n}$ are said to be right (left) linear independent if there dose not exist nonzero  quaternions ${\bf a}_{1},...,{\bf a}_{n}$  such that
 \begin{equation*}
 {\bf v}_{1}{\bf a}_{1}+\cdots+{\bf v}_{n}{\bf a}_{n}=0~({\bf a}_{1} {\bf v}_{1}+\cdots+{\bf a}_{n}{\bf v}_{n}=0).
 \end{equation*}
Based on the definition of right independence of quaternion vectors, we can introduce the rank of a quaternion matrix.
\begin{definition}[\cite{zhang1997quaternions}]\label{d:2.1} The maximum
number of right linearly independent columns of a quaternion matrix
${\bf A} \in \Qmn$ is called the rank of ${\bf A}$.
\end{definition}

Nevertheless, the rank could be different from the maximum number of left linearly independent columns or right linearly independent rows since the quaternion multiplication is noncommutative.
 Moreover, the singular value decomposition of a quaternion matrix reveals the rank of the quaternion matrix which can be given as follows.
\begin{theorem} [QSVD \cite{zhang1997quaternions}] \label{t:qsvd}
Let ${\bf A} \in \mathbb{H}^{n_1\times n_2}$ with $\rank({\bf A})=r$. Then there exist two unitary quaternion matrices ${\bf U}
= [ {\bf u}_1, {\bf u}_2, \cdots, {\bf u}_{n_1} ] \in\mathbb{H}^{n_1\times n_1}$
and ${\bf V} = [ {\bf v}_1, {\bf v}_2, \cdots {\bf v}_{n_2} ] \in\mathbb{H}^{n_2\times n_2}$ such that
\begin{equation}\label{e:qsvd}
{\bf A} = {\bf U}  {\bm \Sigma} {\bf V}^*,
\end{equation}
where $ {\bm \Sigma}=\diag(\sigma_1,\cdots,\sigma_r,0,\cdots,0) \in
\mathbb{R}^{n_1\times n_2}$ and  $\sigma_1 \geq \sigma_2 \geq \cdots \geq \sigma_r$ are
positive singular values of ${\bf A}$.
\end{theorem}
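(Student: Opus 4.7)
The plan is to mimic the classical complex SVD proof, but route the key diagonalization step through the quaternionic spectral theorem for Hermitian matrices. First I would consider the Hermitian quaternion matrix $\mathbf{B}=\mathbf{A}^*\mathbf{A}\in\mathbb{H}^{n_2\times n_2}$. Because $\mathbf{x}^*\mathbf{B}\mathbf{x}=\|\mathbf{A}\mathbf{x}\|^2\geq 0$ for every right quaternion vector $\mathbf{x}$, the matrix $\mathbf{B}$ is positive semidefinite. I would invoke (or prove as a preliminary lemma) the Hermitian spectral theorem over $\mathbb{H}$: every Hermitian quaternion matrix is unitarily diagonalizable by a quaternion unitary matrix with real eigenvalues. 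Applied to $\mathbf{B}$ this yields $\mathbf{V}^*\mathbf{B}\mathbf{V}=\operatorname{diag}(\lambda_1,\dots,\lambda_{n_2})$ with $\lambda_1\geq\cdots\geq\lambda_{n_2}\geq 0$, and one sets $\sigma_i:=\sqrt{\lambda_i}$.

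Next I would identify the rank. The quaternion column space of $\mathbf{A}$ and the right null space of $\mathbf{B}$ are orthogonal complements in $\mathbb{H}^{n_2}$ with respect to the quaternion inner product $\langle\mathbf{x},\mathbf{y}\rangle=\mathbf{x}^*\mathbf{y}$, so $\mathbf{B}$ has exactly $r=\operatorname{rank}(\mathbf{A})$ nonzero eigenvalues; equivalently $\lambda_{r+1}=\cdots=\lambda_{n_2}=0$. For $i=1,\dots,r$ I would then set $\mathbf{u}_i:=\mathbf{A}\mathbf{v}_i/\sigma_i$. A direct computation using $\mathbf{A}^*\mathbf{A}\mathbf{v}_j=\lambda_j\mathbf{v}_j$ and the orthonormality of the $\mathbf{v}_j$ shows
\[
\mathbf{u}_i^*\mathbf{u}_j=\frac{1}{\sigma_i\sigma_j}\mathbf{v}_i^*\mathbf{A}^*\mathbf{A}\mathbf{v}_j=\frac{\lambda_j}{\sigma_i\sigma_j}\mathbf{v}_i^*\mathbf{v}_j=\delta_{ij},
\]
so $\{\mathbf{u}_1,\dots,\mathbf{u}_r\}$ is a right-orthonormal system in $\mathbb{H}^{n_1}$. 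One standard Gram--Schmidt-type completion over $\mathbb{H}$ (which is valid because the right inner product has the usual projection properties, despite noncommutativity) extends this system to a full quaternion unitary matrix $\mathbf{U}=[\mathbf{u}_1,\dots,\mathbf{u}_{n_1}]$.

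Finally I would verify the factorization. For $j\leq r$, $\mathbf{A}\mathbf{v}_j=\sigma_j\mathbf{u}_j$ by construction, and for $j>r$ one has $\|\mathbf{A}\mathbf{v}_j\|^2=\mathbf{v}_j^*\mathbf{B}\mathbf{v}_j=\lambda_j=0$, so $\mathbf{A}\mathbf{v}_j=0$. Collecting columns gives $\mathbf{A}\mathbf{V}=\mathbf{U}\mathbf{\Sigma}$, and right-multiplying by $\mathbf{V}^*$ produces $\mathbf{A}=\mathbf{U}\mathbf{\Sigma}\mathbf{V}^*$, as required.

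The main obstacle, and the only step that really uses the quaternion structure nontrivially, is the Hermitian spectral theorem over $\mathbb{H}$, since the naive complex-case argument (characteristic polynomial plus algebraic closure) is unavailable: $\mathbb{H}$ is noncommutative, so one has no genuine determinant or characteristic polynomial. The cleanest workaround is to transport the problem to the real representation $\hat{\mathbf{B}}=\phi(\mathbf{B})\in\mathbb{R}^{4n_2\times 4n_2}$ from \eqref{realq}: $\hat{\mathbf{B}}$ is a real symmetric positive semidefinite matrix, its real spectral decomposition respects the block structure defining $\Gamma$ (each quaternion eigenvalue giving rise to a real eigenvalue of multiplicity four with an invariant four-dimensional eigenspace carrying a quaternion scalar action), and pulling back through $\phi^{-1}$ recovers the Hermitian spectral theorem for $\mathbf{B}$. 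Once that lemma is in hand, the remaining steps are routine and follow the complex SVD proof almost verbatim.
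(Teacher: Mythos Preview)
Your proposal is correct and follows the classical route to the SVD, with the quaternionic spectral theorem for Hermitian matrices doing the heavy lifting and the real representation $\phi$ from \eqref{realq} serving as the bridge to obtain that lemma. The orthonormality computation, the rank identification, and the Gram--Schmidt completion all go through exactly as you describe, because every scalar that needs to be moved past a quaternion vector in those steps is real.

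However, there is nothing to compare against: the paper does not prove Theorem~\ref{t:qsvd}. It is stated as a quotation from Zhang~\cite{zhang1997quaternions} and used as a black box throughout. For context, Zhang's original argument is close in spirit to yours but passes through the \emph{complex} adjoint matrix of a quaternion matrix (an embedding $\mathbb{H}^{n\times n}\hookrightarrow\mathbb{C}^{2n\times 2n}$) rather than the $4n\times 4n$ real representation used in this paper; either representation lets one import the ordinary spectral theorem and then pull the resulting unitary diagonalization back to $\mathbb{H}$. Your choice of the real representation is perfectly natural here since that is the representation the present paper already sets up and uses elsewhere.
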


Compared with the real and complex cases,  the singular values of a quaternion matrix are still real numbers, while the two unitary matrices are quaternion matrices.
Similar to the complex matrix case, an optimal rank-$r$ approximation of ${\bf A}$ can be given as $\sum_{i=1}^{r} \sigma_{i}{\bf u}_{i}{\bf v}_{i}^{*}$.

%For two manifolds $\mathcal{M}_{1}$ and $\mathcal{M}_{2}$, a point $A \in \mathcal{M}_{1}\cap \mathcal{M}_{2}$ is nontangential if the two manifolds have a positive angle in directions perpendicular to  $\mathcal{M}_{1}\cap \mathcal{M}_{2}$.
\subsection{Manifolds}
 A smooth manifold is a pair $(\mathcal{M},\varphi)$, where $\mathcal{M}$ is a topological manifold and $\varphi$ is a smooth structure on $\mathcal{M}.$
 Let $\gamma:\R\rightarrow \mathcal{M},t\rightarrow \gamma(t)$ be a smooth  curve in $\mathcal{M}$ with $\gamma(0)=x,$ and $\Upsilon(x)$ be the set of all real-valued functions $f:\mathcal{M}\rightarrow \R$ which are smooth around $x\in \mathcal{M}.$ Then the mapping
$$v_{x}:\Upsilon(x)\rightarrow \R, f\rightarrow v_{x}f:=\frac{df(\gamma(t))}{dt}\mid_{t=0}$$
is called the tangent vector of $x$ to the curve $\gamma$
 at $t = 0.$ The set of all derivations of $\mathcal{M}$ at $x$ is a vector
space called the tangent space to $\mathcal{M}$ at $x,$ and is denoted by ${\cal T}_{\cal M}(x).$

Some well known matrix manifolds  in the literature are the orthogonal Stiefel manifold,
the Grassmann manifold and the fixed rank matrix manifold which can be constructed from $\R^{m\times n}$ by taking embedded or quotient operations.  In order to better understand manifolds with its related definitions (e.g., charts, atlases and tangent spaces) and some optimization algorithms on matrix manifolds, we refer to \cite{absil2009optimization,lee2013smooth} and
the references therein. Here, we focus on the fixed rank matrix manifold whose dimension and tangent space are given as follows.
\begin{lemma}[Proposition 2.1 in \cite{vandereycken2013low}]\label{lem1} Let ${\cal M}$ be the set of $m \times n$ real matrices with a fixed rank $r$.
Then  $\mathcal{M}$ is an embedded manifold of $\mathbb{R}^{m\times n}$ with  dimension $(m+n-r)r.$  Suppose that the skinny SVD of $X\in \cal M$ is given by $X=U\Sigma V^{T},$ with $U\in \R^{m\times r}$ and $V\in \R^{n\times r}$ being two column unitary marices. Its tangent space
${\cal T}_{\cal M}(X)$ at $X$  is given by
\begin{align*}
{\cal T}_{\cal M}(X)
=\left\{[U,U_{\bot}]\left(
                                \begin{array}{cc}
                                  \mathbb{R}^{r\times r} & \mathbb{R}^{r\times (n-r)} \\
                                  \mathbb{R}^{(m-r)\times r} & 0^{(m-r)\times (n-r)} \\
                                \end{array}
                              \right)[V,V_{\bot}]^{T}\right\},
%=&\{UMV^{T}+U_{p}V^{T}+UV_{p}^{T}:M\in \mathbb{R}^{r\times r},
%U_{p}\in \mathbb{R}^{m\times r},U_{p}^{T}U=0,V_{p}\in \mathbb{R}^{n\times r}, V_p^{T}V=0\}.
\end{align*}
where $U_{\bot}$ and $V_{\bot}$ are the unitary complements of $U$ and $V$, respectively.
\end{lemma}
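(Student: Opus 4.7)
The plan is to establish the manifold structure via a local implicit description using Schur complements, and then identify the tangent space as the kernel of the differential of the corresponding defining map.

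First, I would fix $X_0 \in \mathcal{M}$ with skinny SVD $X_0 = U_0\Sigma_0 V_0^T$ and work in the orthonormal bases $[U_0, U_0^{\bot}]$ and $[V_0, V_0^{\bot}]$. Any matrix $Y$ near $X_0$ can be written in block form as
\begin{equation*}
Y = [U_0, U_0^{\bot}]\begin{pmatrix} A(Y) & B(Y) \\ C(Y) & D(Y) \end{pmatrix}[V_0, V_0^{\bot}]^T,
\end{equation*}
where $A(Y) \in \mathbb{R}^{r\times r}$ depends smoothly on $Y$ and $A(X_0) = \Sigma_0$ is invertible. By continuity, $A(Y)$ remains invertible on an open neighborhood $\mathcal{U}$ of $X_0$. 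On $\mathcal{U}$ the rank of $Y$ equals $r$ plus the rank of the Schur complement $D(Y) - C(Y)A(Y)^{-1}B(Y)$, so $Y \in \mathcal{M}\cap \mathcal{U}$ if and only if
\begin{equation*}
F(Y) := D(Y) - C(Y)A(Y)^{-1}B(Y) = 0.
\end{equation*}

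Next, I would verify that $F : \mathcal{U} \to \mathbb{R}^{(m-r)\times(n-r)}$ is a smooth submersion at $X_0$. Since $C(X_0) = 0$ and $B(X_0) = 0$, a direct differentiation shows $dF_{X_0}(\xi) = dD_{X_0}(\xi)$ in the block coordinates, i.e.\ the differential simply reads off the $(2,2)$ block of $\xi$ written in the $[U_0, U_0^{\bot}]$, $[V_0, V_0^{\bot}]$ basis; this map is evidently surjective. The constant-rank theorem then shows $\mathcal{M}\cap \mathcal{U} = F^{-1}(0)$ is an embedded submanifold of codimension $(m-r)(n-r)$, so globally $\mathcal{M}$ is an embedded submanifold of $\mathbb{R}^{m\times n}$ of dimension $mn - (m-r)(n-r) = (m+n-r)r$.

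For the tangent space, I would compute $\mathcal{T}_{\mathcal{M}}(X_0) = \ker dF_{X_0}$. By the observation above this is precisely the set of $\xi$ whose $(2,2)$ block vanishes, which gives exactly the claimed parametrization. As a sanity check I would verify this matches the image of the differentiated smooth curves $\gamma(t) = U(t)\Sigma(t)V(t)^T$ with $U(t), V(t)$ smoothly varying column-orthonormal factors: writing $\dot U(0) = U_0 \Omega_U + U_0^{\bot} K_U$ with $\Omega_U$ skew-symmetric (similarly for $V$), one obtains
\begin{equation*}
\dot\gamma(0) = U_0(\Omega_U\Sigma_0 + \dot\Sigma(0) - \Sigma_0\Omega_V)V_0^T + U_0^{\bot}K_U\Sigma_0 V_0^T - U_0\Sigma_0 K_V^T (V_0^{\bot})^T,
\end{equation*}
and the three blocks precisely fill $\mathbb{R}^{r\times r}$, $\mathbb{R}^{(m-r)\times r}$ and $\mathbb{R}^{r\times(n-r)}$. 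A dimension count confirms no further constraints, matching $(m+n-r)r$.

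The main bookkeeping obstacle is justifying the Schur-complement rank formula and showing $F$ is a submersion cleanly at $X_0$; both reduce to the observation that $B(X_0), C(X_0)$ vanish, which kills cross terms in $dF_{X_0}$. Everything else is standard application of the implicit function / constant rank theorem from \cite{lee2013smooth}, and in fact the statement is exactly Proposition 2.1 of \cite{vandereycken2013low}, so the proof would ultimately reference that source.
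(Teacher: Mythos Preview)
The paper does not give its own proof of this lemma; it is quoted verbatim as Proposition~2.1 of \cite{vandereycken2013low} and simply cited. Your Schur-complement/submersion argument is correct and is exactly the standard proof found in that reference (and in \cite{lee2013smooth}). Indeed, the paper reproduces essentially the same argument in the Appendix for the quaternion analogue, Lemma~\ref{qm1}: write a rank-$r$ matrix in block form with invertible top-left $r\times r$ block ${\bf A}$, define the local defining map $F = {\bf D} - {\bf C}{\bf A}^{-1}{\bf B}$, and check that $F$ is a submersion by varying the ${\bf D}$-block alone. So your approach is both correct and aligned with how the paper itself handles the analogous statement.
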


In this section, our aim is to show the intersection of the fixed rank quaternion matrix set and the zero real component quaternion matrix set is a manifold, which provides a theoretical guarantee of the local linear convergence for the alternating projection method proposed in the next section.
  The angle between two manifolds plays an important role in checking the alternating projection method can be applied or not, then we need to introduce the following definitions first.
\begin{definition}[Definition 3.1 in \cite{andersson2013alternating}] \label{de1}
Suppose that $\mathcal{M}_{1}$ and $\mathcal{M}_{2}$ are two manifolds, then
given $A\in \mathcal{M}_{1}\cap \mathcal{M}_{2}$, set
\begin{equation*}
F_{j}^{ \xi}=\{B_{j}\in \mathcal{M}_{j}\backslash A, \|B_{j}-A\|\leq \xi ~\text{and}~ B_{j}-A~\bot~ T_{\mathcal{M}_{1}\cap \mathcal{M}_{2}}(A)\},~ j=1,2.
\end{equation*}
If $F_{j}^{ \xi}\neq 0, ~j=1,2,$ for all $\xi >0,$ we define the angle $\alpha(A)$ of $\mathcal{M}_{1}$ and $\mathcal{M}_{2}$ at $A$ as
$$\alpha(A)=cos^{-1}(\sigma(A)),$$
where \begin{equation*}
\sigma(A)=\lim_{\xi\rightarrow 0}\sup_{B_{1}\in F^{\xi}_{1},B_{2}\in F^{\xi}_{2}}\left\{\frac{\left<B_{1}-A,B_{2}-A\right>}{\|B_{1}-A\|\|B_{2}-A\|}\right\}.
\end{equation*}
\end{definition}
\begin{definition}[Definition 3.3 in \cite{andersson2013alternating}]
Suppose that $\mathcal{M}_{1}$ and $\mathcal{M}_{2}$ are two manifolds.
Points $A\in \mathcal{M}_{1}\cap \mathcal{M}_{2}$ where the angle is defined will be called nontrivial intersection points. For such points, we say that $A$ is tangential if $\alpha(A)=0$ and non-tangential if $\alpha(A)>0.$
\end{definition}

For quaternion matrices, we can also show the fixed-rank ones form a manifold. The proof can be found in Appendix.
\begin{lemma}\label{qm1}
Denote $\mathcal{Q}:=\left\{{\bf E}=E_{0}+E_{1}{\bf i} +E_{2}{\bf j} +E_{3}{\bf k}\in {\mathbb H}^{m\times n},~\rank({\bf E})=r\right\}$.
Then  $\mathcal{Q}$ is an  embedded submanifold of $\mathbb{R}^{4m\times 4n}$ with dimension $4(m+n-r)r.$
\end{lemma}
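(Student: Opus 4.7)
The plan is to transfer the problem to $\mathbb{R}^{4m\times 4n}$ through the real representation $\phi$ of \eqref{realq} and then build explicit local charts via a quaternion Schur-complement argument. Since $\phi$ is a ring homomorphism that maps unitary quaternion matrices to real orthogonal matrices, applying $\phi$ to the QSVD ${\bf E}={\bf U}\boldsymbol{\Sigma}{\bf V}^*$ from Theorem \ref{t:qsvd} gives $\phi({\bf E})=\phi({\bf U})\phi(\boldsymbol{\Sigma})\phi({\bf V})^T$, with $\phi({\bf U}),\phi({\bf V})$ orthogonal and $\phi(\boldsymbol{\Sigma})$ block-diagonal in $\mathbb{R}^{4m\times 4n}$ carrying each nonzero $\sigma_i$ exactly four times. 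Hence $\rank_{\mathbb{R}}(\phi({\bf E}))=4\rank_{\mathbb{H}}({\bf E})$, and so $\phi(\mathcal{Q})$ lies inside both the linear subspace $\Gamma$ (of real dimension $4mn$) and the real fixed-rank-$4r$ manifold from Lemma \ref{lem1}.

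To promote this to an embedded submanifold of dimension $4r(m+n-r)$, I would construct a local chart at an arbitrary ${\bf E}_0\in\mathcal{Q}$. Fix a QSVD ${\bf E}_0={\bf U}_0\boldsymbol{\Sigma}_0{\bf V}_0^*$ with $\boldsymbol{\Sigma}_0=\diag(\sigma_1,\ldots,\sigma_r)$, and for ${\bf E}$ in a small neighborhood of ${\bf E}_0$ introduce
\[
{\bf W}={\bf U}_0^*{\bf E}{\bf V}_0=\begin{pmatrix}{\bf W}_{11} & {\bf W}_{12}\\ {\bf W}_{21} & {\bf W}_{22}\end{pmatrix},\qquad {\bf W}_{11}\in\mathbb{H}^{r\times r}.
\]
For ${\bf E}$ close to ${\bf E}_0$, the block ${\bf W}_{11}$ is close to $\boldsymbol{\Sigma}_0$ and hence invertible. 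Since left/right multiplication by unitary quaternion matrices preserves rank, $\rank({\bf E})=\rank({\bf W})$, and the quaternion block factorization reduces the rank condition to the Schur identity ${\bf W}_{22}={\bf W}_{21}{\bf W}_{11}^{-1}{\bf W}_{12}$. Consequently $\mathcal{Q}$ is locally the graph of a smooth map whose free parameters are the triple $({\bf W}_{11},{\bf W}_{12},{\bf W}_{21})$, accounting for $r^2+r(n-r)+(m-r)r=r(m+n-r)$ quaternion entries, i.e. $4r(m+n-r)$ real entries.

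Transporting these charts through $\phi$ realizes $\mathcal{Q}$ as an embedded submanifold of $\Gamma\subset\mathbb{R}^{4m\times 4n}$ of the claimed dimension. The main obstacle is verifying the Schur-complement step over $\mathbb{H}$: since quaternion multiplication is noncommutative and rank is defined via right linear independence (Definition \ref{d:2.1}), one must check the identity
\[
\rank({\bf W})=\rank({\bf W}_{11})+\rank\bigl({\bf W}_{22}-{\bf W}_{21}{\bf W}_{11}^{-1}{\bf W}_{12}\bigr)
\]
carefully. This can be established from the factorization
\[
\begin{pmatrix}{\bf I} & 0\\ -{\bf W}_{21}{\bf W}_{11}^{-1} & {\bf I}\end{pmatrix}{\bf W}\begin{pmatrix}{\bf I} & -{\bf W}_{11}^{-1}{\bf W}_{12}\\ 0 & {\bf I}\end{pmatrix}=\begin{pmatrix}{\bf W}_{11} & 0\\ 0 & {\bf W}_{22}-{\bf W}_{21}{\bf W}_{11}^{-1}{\bf W}_{12}\end{pmatrix},
\]
whose outer factors are invertible over $\mathbb{H}$; however one must preserve the precise order of each quaternion product throughout the verification to avoid sign/ordering errors inherent in the noncommutative setting.
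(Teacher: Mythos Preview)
Your proposal is correct and follows essentially the same Schur-complement strategy as the paper's proof: both localize at a point of $\mathcal{Q}$, arrange for an invertible $r\times r$ quaternion block, and then read off the manifold structure from the condition that the Schur complement vanishes. The only cosmetic differences are that the paper reaches an invertible top-left block by row/column permutations rather than by your QSVD change of coordinates ${\bf W}={\bf U}_0^*{\bf E}{\bf V}_0$, and it phrases the conclusion as ``zero level set of a submersion'' (computing the pushforward along a curve in the ${\bf D}$-block) rather than as ``graph of a smooth map''; these two viewpoints are of course equivalent via the implicit function theorem, and the dimension count $4r(m+n-r)$ is obtained identically.
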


By the real representation of a quaternion matrix given in (\ref{realq}), we can get  $\mathcal{Q}$ is isomorphic to the set of structured real matrices:
\begin{align}\label{v1}
&\mathcal{M}_{r}:=\left\{ X=
           \begin{pmatrix}
             A_{0} & -A_{1} & A_{2} & -A_{3} \\
             A_{1} & A_{0} &-A_{3} & A_{2} \\
             -A_{2} & A_{3}& A_{0} & -A_{1} \\
             A_{3} & -A_{2} & A_{1} & A_{0} \\
           \end{pmatrix}
\in \mathbb{R}^{4m\times 4n}, \ \rank(X)= 4r\right\},
\end{align}
and the set of pure quaternion matrices is isomorphic to the set of structured real matrices:
\begin{align}
\mathcal{M}_{*}:=\left\{X=\left(
           \begin{array}{cccc}
            0 & -A_{1} & A_{2} & -A_{3} \\
             A_{1} & 0  &-A_{3} & A_{2} \\
             -A_{2} & A_{3}&0  & -A_{1} \\
             A_{3} & -A_{2} & A_{1} & 0  \\
           \end{array}
         \right)
\in \mathbb{R}^{4m\times 4n}\right\},\label{v2}
\end{align}
respectively. Obviously, $\mathcal{M}_{*}$ is a linear subspace, thus also a manifold.
After that  the main task of this section can be rewritten as proving
%$\mathcal{M}_{r*}$ as the intersection of $\mathcal{M}_{r}$ and $\mathcal{M}_{*}$:
\begin{align}\label{v3}
\mathcal{M}_{r*}:=\mathcal{M}_{r}\cap \mathcal{M}_{*}=\left\{X=\left(
           \begin{array}{cccc}
            0 & -A_{1} & A_{2} & -A_{3} \\
             A_{1} & 0  &-A_{3} & A_{2} \\
             -A_{2} & A_{3}&0  & -A_{1} \\
             A_{3} & -A_{2} & A_{1} & 0  \\
           \end{array}
         \right)
\in \mathbb{R}^{4m\times 4n}, \ \rank(X)=4r\right\}
\end{align}
 is a manifold. Moreover, $\mathcal{M}_{r}$ and $\mathcal{M}_{*}$  can be seen as the vanishing of different polynomials sets over $\mathbb{R}^{n}$,  which motivated us to apply algebraic  geometry methods to study the above problem. Before moving on, we need to introduce the following definitions and results which are needed in the sequel.
\begin{theorem}[Theorem 2.1 in  \cite{whiteneyvarieties}]\label{whi}
Given a real algebraic variety $\mathcal{V}\in \mathbb{R}^{n}$, we can write $\mathcal{V}=\bigcup_{j=0}^{m} \mathcal{V}_{j}$ where each $\mathcal{V}_{j}$ is either void or a $\mathbb{C}^{(\infty)}$-manifold of
dimension $j$. Moreover, each $\mathcal{V}_{j}$ contains at most a finite number of connected components.
\end{theorem}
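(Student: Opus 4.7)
The plan is to prove the stratification by induction on the dimension $d$ of the real algebraic variety $\mathcal{V}$, using the singular locus as the descent mechanism. The base case $d=0$ is trivial: a zero-dimensional real algebraic variety is a finite set of isolated points, which I would place into $\mathcal{V}_0$, with each point being a $0$-dimensional manifold contributing one connected component. For the inductive step, I would separate $\mathcal{V}$ into a smooth regular part and a singular part of strictly smaller dimension, stratify the regular part directly, and recurse on the singular part.

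Concretely, suppose $\mathcal{V}$ is cut out by polynomials $f_1,\dots,f_s\in\mathbb{R}[x_1,\dots,x_n]$. I would define the singular locus $\mathcal{V}_{\mathrm{sing}}$ via the Jacobian criterion: $p\in\mathcal{V}$ is singular if the Jacobian $(\partial f_i/\partial x_k)(p)$ fails to attain the generic rank of this matrix on the connected component of $\mathcal{V}$ containing $p$. This locus is itself defined by polynomial equations (the $f_i$ together with the vanishing of the relevant maximal minors), so $\mathcal{V}_{\mathrm{sing}}$ is a real algebraic variety of strictly lower dimension. The complement $\mathcal{V}_{\mathrm{reg}}=\mathcal{V}\setminus\mathcal{V}_{\mathrm{sing}}$ consists of smooth points, and by the constant-rank theorem applied locally to $(f_1,\dots,f_s)$, $\mathcal{V}_{\mathrm{reg}}$ is an embedded $C^{\infty}$-submanifold of $\mathbb{R}^n$. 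I would then partition $\mathcal{V}_{\mathrm{reg}}$ by local dimension, letting $\mathcal{V}_j^{\mathrm{reg}}$ be the union of its $j$-dimensional connected components, and combine with the strata $\mathcal{W}_j$ obtained from applying the inductive hypothesis to $\mathcal{V}_{\mathrm{sing}}$, defining $\mathcal{V}_j=\mathcal{V}_j^{\mathrm{reg}}\cup\mathcal{W}_j$.

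For the finiteness of connected components of each $\mathcal{V}_j$, I would appeal to the Tarski--Seidenberg theorem and the theory of semialgebraic sets. Each $\mathcal{V}_j$ is semialgebraic because its construction requires only polynomial equalities and inequalities (to express the rank conditions on the Jacobian via the vanishing of some minors and the non-vanishing of others). Any semialgebraic subset of $\mathbb{R}^n$ admits a cylindrical algebraic decomposition, from which finiteness of the connected components follows.

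The main obstacle will be establishing the dimension drop $\dim\mathcal{V}_{\mathrm{sing}}<\dim\mathcal{V}$, on which the induction stands or falls. This requires a careful choice of what ``dimension'' means for a real algebraic variety (for instance, the largest $j$ such that $\mathcal{V}$ contains an embedded $C^{\infty}$-manifold of dimension $j$, or alternatively via the Krull dimension of the associated real ideal) and a proof that the non-singular locus is dense in each top-dimensional piece. The real setting is more delicate than the complex one because the regular locus can be empty on certain real-irreducible components; handling this typically requires passing to the complexification, or working with a real-irreducible decomposition and treating the ``thin'' components separately by absorbing them into lower-dimensional strata.
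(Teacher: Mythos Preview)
The paper does not give its own proof of this statement: it is quoted verbatim as Theorem~2.1 from Whitney's 1957 paper \emph{Elementary structure of real algebraic varieties}, and is used only as a black box to justify that the main part of a variety is a manifold. So there is no in-paper argument to compare against.

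Your outline is essentially Whitney's own strategy: induct on dimension, strip off the regular locus as a smooth manifold, and recurse on the singular locus, with finiteness of components coming from semialgebraic geometry. Two points are worth flagging. First, defining $\mathcal{V}_{\mathrm{sing}}$ via the Jacobian of a \emph{chosen} generating set $f_1,\dots,f_s$ is not intrinsic; the rank of $(\partial f_i/\partial x_k)$ depends on the generators, not on the variety (e.g.\ $\mathcal{V}=\{x=0\}$ cut out by $f=x^2$). Your ``generic rank on the connected component'' patch mitigates this but does not fully resolve it; the clean formulation uses the ideal $\mathbb{I}_{\mathbb{R}}(\mathcal{V})$ of \emph{all} polynomials vanishing on $\mathcal{V}$, which is exactly the object the paper works with in the surrounding lemmas (e.g.\ the definition of $\mathcal{N}_{\mathcal{V}}(z)$ and Lemma~\ref{q11}). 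Second, you correctly identify the real difficulty: over $\mathbb{R}$ the dimension drop $\dim\mathcal{V}_{\mathrm{sing}}<\dim\mathcal{V}$ is not automatic from the complex argument, and Whitney handles it by a careful analysis that in modern language amounts to passing through the Zariski closure and controlling the real points. Since the paper only \emph{uses} the result, your sketch is more than adequate for the purpose here; a full proof would need to close exactly the gap you already named.
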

 Theorem \ref{whi} shows us that the main part of a variety is a manifold.  For a given real algebraic variety $\mathcal{V}\in \mathbb{R}^{n}$, if we identity $\mathbb{R}^{n}$ as a subset of $\mathbb{C}^{n}$ and denote $ \mathbb{I}_{\mathbb{R}}(\mathcal{V}) $ as the set of real polynomials that vanish on $\mathcal{V}$, then $\mathcal{V}$ has a related complex variety given by its Zariski closure
\begin{equation*}\label{eqn1}
\mathcal{V}_{Zar} = \{ z \in \mathbb{C}^{n} : p(z)=0, \ \forall p \in \mathbb{I}_{\mathbb{R}}(\mathcal{V}) \},
\end{equation*}
which is defined as the subset in $\mathbb{C}^{n}$ of common zeros to all polynomials that vanish on $\mathcal{V}$.
 A given real algebraic variety $\mathcal{V}$ is called irreducible if there does not exist any non-trivial decompositions of the form $\mathcal{V}=\mathcal{V}_{1}\bigcup \mathcal{V}_{2}$, where $\mathcal{V}_{1}$ and $\mathcal{V}_{2}$ are real algebraic varieties. %, see \cite{andersson2013alternating}.
%Let $\mathbb{I}_{\mathbb{R}}(\mathcal{Z})$ denote the set of real polynomials that vanish on $\mathcal{Z}$.
A point $z \in \mathcal{V}$ is non-singular if it is non-singular in the sense of algebraic geometry as an element of $\mathcal{V}_{Zar}.$ 
Denote $\nabla$ as the gradient operator and set ${\cal N}_{\cal V}(z) = \{ \nabla p(z) : p \in \mathbb{I}_{\mathbb{R}}(\mathcal{V}) \}$.
The set of non-singular points in ${\cal V}$ is denoted as ${\cal V}^{ns}$.
The following results provide some criteria to check whether a point is a non-singular point or not.

\begin{lemma}[Proposition 6.3 in \cite{andersson2013alternating}]\label{q11}
 Let $\mathcal{V}\in \mathbb{R}^{n}$ be a irreducible real algebraic variety of dimension $m$.
 Then $\dim {\cal N}_{\mathcal{V}}(z)\leq n-m$ for all $z\in \mathcal{V}$ and $z$ is
 non-singular if and only if $\dim {\cal N}_{\mathcal{V}}(z)=n-m$.
\end{lemma}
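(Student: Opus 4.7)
The plan is to reduce the claim to the classical Jacobian criterion for smoothness on an irreducible variety. First I would invoke the Hilbert basis theorem to fix a finite generating set $\mathbb{I}_{\mathbb{R}}(\mathcal{V})=(p_1,\ldots,p_k)$. For any $p=\sum_i a_i p_i$ in this ideal and any $z\in\mathcal{V}$, the product rule together with $p_i(z)=0$ yields $\nabla p(z)=\sum_i a_i(z)\nabla p_i(z)$. Hence
$$\mathcal{N}_{\mathcal V}(z)=\mathrm{span}\{\nabla p_1(z),\ldots,\nabla p_k(z)\},$$
which is exactly the row space of the Jacobian $J(z)=(\partial p_i/\partial x_j)(z)\in\mathbb{R}^{k\times n}$. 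In particular $\dim\mathcal{N}_{\mathcal{V}}(z)=\mathrm{rank}\,J(z)$, so the lemma reduces to a rank bound on $J(z)$.

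Second, I would interpret $\mathrm{rank}\,J(z)$ via the Zariski tangent space. By definition the real Zariski tangent space at $z$ is $T_z\mathcal{V}:=\ker J(z)\subset\mathbb{R}^n$, and rank-nullity gives $\dim\mathcal{N}_{\mathcal{V}}(z)+\dim T_z\mathcal{V}=n$. Viewing $z$ as a point of the complex variety $\mathcal{V}_{\mathrm{Zar}}$, whose vanishing ideal is generated over $\mathbb{C}$ by the same $p_i$, the Jacobian rank at the real point $z$ is the same whether computed over $\mathbb{R}$ or $\mathbb{C}$, so $\dim T_z\mathcal{V}$ coincides with the dimension of the complex Zariski tangent space at $z$. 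The classical fact that for an irreducible variety the tangent-space dimension is at least $\dim\mathcal{V}$, with equality exactly on the non-singular locus, then furnishes $\dim T_z\mathcal{V}\geq m$ with equality if and only if $z$ is non-singular (which is the very definition of non-singularity adopted in the paper).

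Combining the two steps, $\dim\mathcal{N}_{\mathcal{V}}(z)=n-\dim T_z\mathcal{V}\leq n-m$, with equality precisely when $z$ is non-singular, as claimed. The main obstacle is the algebraic-geometric input in the second step: the inequality $\dim T_z\mathcal{V}\geq\dim\mathcal{V}$ together with the sharpness criterion ultimately rests on either the local structure theorem for irreducible varieties or the characterization of regular local rings via Krull dimension of $\mathcal{O}_{\mathcal{V}_{\mathrm{Zar}},z}$ and its maximal-ideal cotangent space. Once this standard fact is granted, the remainder is routine bookkeeping (Hilbert basis, product rule, rank-nullity), and the passage from $\mathbb{R}$ to $\mathbb{C}$ causes no new difficulty because the rank of a real matrix is invariant under scalar extension.
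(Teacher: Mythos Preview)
The paper does not supply its own proof of this lemma; it is quoted verbatim as Proposition~6.3 of \cite{andersson2013alternating} and used as a black box. So there is no in-paper argument to compare against.

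Your proposal is correct and is exactly the standard route one would expect: reduce $\mathcal{N}_{\mathcal{V}}(z)$ to the row space of the Jacobian of a finite generating set of $\mathbb{I}_{\mathbb{R}}(\mathcal{V})$ via Hilbert's basis theorem and the product rule, then use rank--nullity to pass to the Zariski tangent space, and finally invoke the classical Jacobian criterion over $\mathbb{C}$ (tangent-space dimension is at least the variety's dimension, with equality precisely at smooth points). The passage from $\mathbb{R}$ to $\mathbb{C}$ is harmless here because $\mathbb{I}_{\mathbb{R}}(\mathcal{V})$ is prime, so its extension to $\mathbb{C}[x]$ is radical and cuts out $\mathcal{V}_{\mathrm{Zar}}$ with the same Krull dimension; the Jacobian rank of a real matrix is of course unchanged under scalar extension. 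This is essentially the argument in \cite{andersson2013alternating} as well, so your approach matches the source the paper cites.
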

\begin{lemma}[Proposition 6.4 in \cite{andersson2013alternating}]\label{new1}
Let $\mathcal{V}$ be an irreducible real algebraic variety of dimension $m$. Then the decomposition $\mathcal{V}=\bigcup_{j=1}^{m}\mathcal{M}_{j}$ in Theorem \ref{whi} can be chosen as $\mathcal{V}^{ns}=\mathcal{M}_{m}.$
\end{lemma}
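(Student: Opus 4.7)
The plan is to show that the non‑singular locus $\mathcal{V}^{ns}$ is itself a smooth embedded submanifold of $\mathbb{R}^{n}$ of dimension exactly $m$, and that its complement inside $\mathcal{V}$ is a proper algebraic subvariety, hence of dimension at most $m-1$. Given these two facts, I apply Theorem \ref{whi} to $\mathcal{V}^{sing}:=\mathcal{V}\setminus\mathcal{V}^{ns}$ to stratify it into pieces of dimension $0,\ldots,m-1$, and then set the top stratum $\mathcal{M}_{m}$ to be $\mathcal{V}^{ns}$ itself.

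First I would verify that $\mathcal{V}^{ns}$ is a smooth $m$‑manifold. Fix $z\in\mathcal{V}^{ns}$. By Lemma \ref{q11}, $\dim\mathcal{N}_{\mathcal{V}}(z)=n-m$, so there exist polynomials $p_{1},\ldots,p_{n-m}\in\mathbb{I}_{\mathbb{R}}(\mathcal{V})$ whose gradients at $z$ are linearly independent. The implicit function theorem then produces a neighborhood $U$ of $z$ on which the joint zero set $N_{z}:=\{y\in U:p_{1}(y)=\cdots=p_{n-m}(y)=0\}$ is a smooth embedded $m$‑submanifold of $\mathbb{R}^{n}$. Since the $p_{i}$ vanish on $\mathcal{V}$, one has $\mathcal{V}\cap U\subset N_{z}$; combining this with Theorem \ref{whi}, which guarantees that near $z$ the set $\mathcal{V}$ is a finite union of $C^{\infty}$‑manifolds of dimension at most $m$, forces $\mathcal{V}$ to coincide with $N_{z}$ on a (possibly smaller) neighborhood of $z$. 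In particular $\mathcal{V}^{ns}$ is open in $\mathcal{V}$ and is a smooth manifold of dimension $m$.

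Next I would show that $\mathcal{V}^{sing}$ is itself a real algebraic variety and is strictly lower‑dimensional. Pick a finite generating set $p_{1},\ldots,p_{k}$ of $\mathbb{I}_{\mathbb{R}}(\mathcal{V})$, available by Hilbert's basis theorem. By Lemma \ref{q11}, a point $z\in\mathcal{V}$ is singular precisely when every $(n-m)\times(n-m)$ minor of the Jacobian $[\nabla p_{1}(z),\ldots,\nabla p_{k}(z)]$ vanishes; these minors are polynomials in the coordinates, so $\mathcal{V}^{sing}$ is a real algebraic subvariety of $\mathcal{V}$. Since the previous step shows $\mathcal{V}^{ns}\neq\emptyset$, the variety $\mathcal{V}^{sing}$ is a \emph{proper} subvariety of $\mathcal{V}$, and irreducibility then forces $\dim\mathcal{V}^{sing}<m$.

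Finally, I would apply Theorem \ref{whi} to $\mathcal{V}^{sing}$ to obtain a decomposition $\mathcal{V}^{sing}=\bigcup_{j=0}^{m-1}\mathcal{S}_{j}$, with each $\mathcal{S}_{j}$ either empty or a $C^{\infty}$‑manifold of dimension $j$ with finitely many connected components; setting $\mathcal{M}_{m}:=\mathcal{V}^{ns}$ and $\mathcal{M}_{j}:=\mathcal{S}_{j}$ for $j<m$ yields the required decomposition. The main obstacle lies in the irreducibility step: concluding that a proper algebraic subvariety of $\mathcal{V}$ has strictly smaller dimension requires passing from $\mathcal{V}\subset\mathbb{R}^{n}$ to its Zariski closure $\mathcal{V}_{Zar}\subset\mathbb{C}^{n}$, since dimension of real varieties is cleanest on the complex side, and one must verify that the \emph{real} irreducibility used in the excerpt interacts correctly with "generic‑point smoothness" and "strict dimension drop on proper subvarieties," which are classically complex‑algebraic statements.
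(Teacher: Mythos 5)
First, a remark on the comparison itself: the paper does not prove this lemma. It is imported verbatim as Proposition~6.4 of the cited reference \cite{andersson2013alternating}, so there is no in-paper argument to measure your proposal against; what follows is an assessment of your proof on its own terms.

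Your overall architecture (show $\mathcal{V}^{ns}$ is a smooth $m$-manifold, show the singular locus is a proper subvariety of dimension at most $m-1$, stratify the singular locus by Theorem~\ref{whi}, and declare $\mathcal{M}_m := \mathcal{V}^{ns}$) is the standard and correct route. However, there is a genuine gap at the central step. After establishing $\mathcal{V}\cap U\subseteq N_z$ via the implicit function theorem, you claim that Theorem~\ref{whi} \emph{forces} $\mathcal{V}$ to coincide with $N_z$ near $z$. It does not: a subset of an $m$-manifold that is a finite union of $C^{\infty}$-manifolds of dimension at most $m$ can perfectly well be a proper, lower-dimensional subset near a given point, so nothing in the stratification rules out that the stratum through $z$ has dimension $<m$. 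That the local dimension cannot drop at a point where $\dim\mathcal{N}_{\mathcal{V}}(z)=n-m$ is precisely the content one must prove, and the only known route is through the complexification: one shows that $\mathcal{V}_{Zar}$ is an irreducible complex variety of dimension $m$, hence of local dimension $m$ at $z$; that near $z$ it sits inside the complex $m$-manifold cut out by $p_1,\dots,p_{n-m}$ and therefore coincides with it locally; and that consequently every $p\in\mathbb{I}_{\mathbb{R}}(\mathcal{V})$ vanishes on the real slice $N_z$, giving the missing inclusion $N_z\subseteq\mathcal{V}$. You gesture at the need for the complex side only in your closing caveat about dimension drop, but the same machinery is already indispensable one step earlier, where you instead invoke Theorem~\ref{whi}.

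A second, smaller defect: you assert that "the previous step shows $\mathcal{V}^{ns}\neq\emptyset$," but that step only describes points of $\mathcal{V}^{ns}$ \emph{assuming they exist}; it does not produce one. Non-emptiness needs its own argument (e.g., if every real point were singular in $\mathcal{V}_{Zar}$, then $\mathcal{V}$ would lie in the proper complex subvariety $\mathrm{Sing}(\mathcal{V}_{Zar})$, contradicting the Zariski density of $\mathcal{V}$ in $\mathcal{V}_{Zar}$). The remaining steps — identifying $\mathcal{V}\setminus\mathcal{V}^{ns}$ as the zero set of the $(n-m)\times(n-m)$ minors of the Jacobian of a finite generating set, and restratifying it — are sound once the two issues above are repaired.
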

\begin{lemma}[Proposition 6.5 in \cite{andersson2013alternating}]\label{lm1}
Suppose that $\mathcal{V}_{1}$ and $\mathcal{V}_{2}$ are irreducible real algebraic varieties and that $\mathcal{V}=\mathcal{V}_{1}\cap \mathcal{V}_{2}$ is irreducible and strictly smaller that both $\mathcal{V}_{1}$ and $\mathcal{V}_{2}.$ Then each point in $\mathcal{V}^{ns}_{1}\cap \mathcal{V}^{ns}_{2}\cap \mathcal{V}^{ns}$ is a non-trivial intersection point.
\end{lemma}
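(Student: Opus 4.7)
The plan is to verify that each $z\in\mathcal{V}^{ns}_{1}\cap\mathcal{V}^{ns}_{2}\cap\mathcal{V}^{ns}$ meets the non-emptiness condition built into Definition \ref{de1}, namely that $F_{j}^{\xi}\neq\emptyset$ for $j=1,2$ and every $\xi>0$. My approach is to exploit non-singularity to obtain a smooth local manifold structure on each $\mathcal{V}_{j}$ around $z$, show that the strict containment $\mathcal{V}\subsetneq\mathcal{V}_{j}$ forces a proper tangent-space inclusion $T_{\mathcal{V}}(z)\subsetneq T_{\mathcal{V}_{j}}(z)$, and then use a local graph parametrization of $\mathcal{V}_{j}$ over its tangent space to produce nearby points whose displacement from $z$ lies entirely in $T_{\mathcal{V}}(z)^{\perp}$.

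First, I would combine Lemma \ref{q11} and Lemma \ref{new1} to identify, at a non-singular point of an irreducible variety, the tangent space with the orthogonal complement of the normal space $\mathcal{N}$. Since $\mathcal{V}\subset\mathcal{V}_{j}$ implies $\mathbb{I}_{\mathbb{R}}(\mathcal{V}_{j})\subset\mathbb{I}_{\mathbb{R}}(\mathcal{V})$, taking gradients at $z$ gives $\mathcal{N}_{\mathcal{V}_{j}}(z)\subset\mathcal{N}_{\mathcal{V}}(z)$ and hence $T_{\mathcal{V}}(z)\subset T_{\mathcal{V}_{j}}(z)$. Next, because $\mathcal{V}$ and $\mathcal{V}_{j}$ are irreducible with $\mathcal{V}\subsetneq\mathcal{V}_{j}$, the dimensions are strictly ordered: $\dim\mathcal{V}<\dim\mathcal{V}_{j}$. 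Combined with Lemma \ref{q11} (which equates the dimension of the variety with the codimension of $\mathcal{N}$ at a non-singular point), this yields $\dim T_{\mathcal{V}}(z)<\dim T_{\mathcal{V}_{j}}(z)$, so the subspace $W_{j}:=T_{\mathcal{V}_{j}}(z)\cap T_{\mathcal{V}}(z)^{\perp}$ is non-trivial.

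Second, I would parametrize $\mathcal{V}_{j}$ locally around $z$ as a graph $u\mapsto z+u+\psi_{j}(u)$, with $u$ ranging in a neighborhood of $0$ in $T_{\mathcal{V}_{j}}(z)$, $\psi_{j}(u)\in T_{\mathcal{V}_{j}}(z)^{\perp}$, $\psi_{j}(0)=0$, and $D\psi_{j}(0)=0$. The inclusion $T_{\mathcal{V}}(z)\subset T_{\mathcal{V}_{j}}(z)$ gives $T_{\mathcal{V}_{j}}(z)^{\perp}\subset T_{\mathcal{V}}(z)^{\perp}$, so the correction $\psi_{j}(u)$ lies automatically in $T_{\mathcal{V}}(z)^{\perp}$. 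Consequently the orthogonality requirement $(z+u+\psi_{j}(u))-z\perp T_{\mathcal{V}}(z)$ collapses to the linear condition $u\in W_{j}$. Picking any non-zero $u\in W_{j}$ of sufficiently small norm (chosen so that $\|u+\psi_{j}(u)\|\le\xi$) delivers a point $B_{j}\in\mathcal{V}_{j}\setminus\{z\}$ with $\|B_{j}-z\|\le\xi$ and $B_{j}-z\perp T_{\mathcal{V}}(z)$, proving $F_{j}^{\xi}\neq\emptyset$.

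The main obstacle I anticipate is justifying the strict dimension inequality $\dim\mathcal{V}<\dim\mathcal{V}_{j}$ in the real algebraic setting. Over $\mathbb{C}$ this is classical, but for real irreducible varieties one must argue carefully; my intended workaround is by contradiction, using the non-singular common point $z$: if $\dim\mathcal{V}=\dim\mathcal{V}_{j}$, then locally near $z$ both would be smooth manifolds of the same dimension, so $\mathcal{V}$ would contain a Euclidean neighborhood of $z$ in $\mathcal{V}_{j}$, which via passage to the complex Zariski closures and irreducibility of $\mathcal{V}_{j,\mathrm{Zar}}$ forces $\mathcal{V}_{j}\subset\mathcal{V}$ and contradicts strict containment. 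Once this dimensional fact is secured, the local graph parametrization and the containment of orthogonal complements make the construction essentially automatic.
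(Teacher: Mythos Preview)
The paper does not supply its own proof of this lemma: it is simply quoted as Proposition~6.5 of \cite{andersson2013alternating}, so there is nothing in the paper to compare against directly.

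Your argument is essentially the natural one and is correct in outline. The chain $\mathbb{I}_{\mathbb{R}}(\mathcal{V}_{j})\subset\mathbb{I}_{\mathbb{R}}(\mathcal{V})\Rightarrow\mathcal{N}_{\mathcal{V}_{j}}(z)\subset\mathcal{N}_{\mathcal{V}}(z)\Rightarrow T_{\mathcal{V}}(z)\subset T_{\mathcal{V}_{j}}(z)$ is exactly how one passes from the set-theoretic inclusion to the tangent-space inclusion at a common non-singular point, and the graph parametrization $u\mapsto z+u+\psi_{j}(u)$ with $u\in W_{j}=T_{\mathcal{V}_{j}}(z)\cap T_{\mathcal{V}}(z)^{\perp}$ cleanly produces the required elements of $F_{j}^{\xi}$. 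One small point worth stating explicitly: you should note that near $z$ the intersection $\mathcal{V}_{1}^{ns}\cap\mathcal{V}_{2}^{ns}$ coincides with $\mathcal{V}$ (openness of the non-singular loci in each $\mathcal{V}_{j}$), so that the ``$T_{\mathcal{M}_{1}\cap\mathcal{M}_{2}}(A)$'' appearing in Definition~\ref{de1} is indeed $T_{\mathcal{V}^{ns}}(z)$; you use this implicitly but never say it. Your handling of the strict dimension inequality $\dim\mathcal{V}<\dim\mathcal{V}_{j}$ via the local ``same-dimension submanifold is open'' observation, followed by Zariski-closure and irreducibility, is the right idea; the only delicate step is that a Euclidean-open piece of the real locus $\mathcal{V}_{j}$ around a non-singular point is Zariski-dense in $\mathcal{V}_{j,\mathrm{Zar}}$, which is standard but deserves a one-line justification (e.g.\ any polynomial vanishing on such a piece vanishes to infinite order at $z$, hence on the whole irreducible component).
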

\begin{lemma}[Theorem 6.6 in \cite{andersson2013alternating}]\label{lm2}
Suppose that $\mathcal{V}_{1}$ and $\mathcal{V}_{2}$ are irreducible real algebraic varieties and that $\mathcal{V}=\mathcal{V}_{1}\cap \mathcal{V}_{2}$ is irreducible and strictly smaller that both $\mathcal{V}_{1}$ and $\mathcal{V}_{2}.$ Let the dimension of $\mathcal{V}$ is $m.$ If $\mathcal{V}^{ns,nt}\neq 0,$ then $\mathcal{V}\setminus \mathcal{V}^{ns,nt}$ is a real algebraic variety of dimension strictly less than $m$. A sufficient condition for this to happen is that there exist a point $A\in \mathcal{V}_{1}^{ns}\cap \mathcal{V}_{2}^{ns}$ such that
\begin{equation*}
\dim(T_{\mathcal{V}_{1}^{ns}}(A)+T_{\mathcal{V}_{2}^{ns}}(A))\leq m.
\end{equation*}
\end{lemma}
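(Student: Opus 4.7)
The plan is to split the lemma into its two assertions and handle them in order. First I would show that, under the non-emptiness hypothesis, $\mathcal{V}\setminus\mathcal{V}^{ns,nt}$ is contained in a proper algebraic subvariety of $\mathcal{V}$, so that irreducibility together with $\dim\mathcal{V}=m$ forces its dimension to drop. Second, I would verify that the dimensional criterion on the tangent spaces is enough to produce a point in $\mathcal{V}^{ns,nt}$.

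For the first assertion I would decompose
\[
\mathcal{V}\setminus\mathcal{V}^{ns,nt} \;=\; (\mathcal{V}\setminus\mathcal{V}^{ns})\;\cup\;(\mathcal{V}^{ns}\setminus\mathcal{V}^{ns,nt}).
\]
The singular locus $\mathcal{V}\setminus\mathcal{V}^{ns}$ is a standard algebraic subvariety of $\mathcal{V}$ of strictly smaller dimension, which follows from Lemma \ref{q11} (non-singularity is the maximal-rank condition on the Jacobian of $\mathbb{I}_{\mathbb{R}}(\mathcal{V})$, and failure of that rank is cut out by the vanishing of the maximal minors). For the second piece I would argue that at a non-singular $A\in\mathcal{V}_1^{ns}\cap\mathcal{V}_2^{ns}$ the tangent spaces $T_{\mathcal{V}_i^{ns}}(A)$ are the kernels of the Jacobians of generators of $\mathbb{I}_{\mathbb{R}}(\mathcal{V}_i)$, hence are polynomial in the coordinates of $A$. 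Tangentiality $\alpha(A)=0$ translates into the degeneracy of an associated Gram-type matrix built from bases of these kernels, which is a determinantal (hence algebraic) condition. Thus the tangential locus inside $\mathcal{V}^{ns}$ is Zariski closed, and together with the singular locus it cuts out a proper algebraic subvariety of the irreducible variety $\mathcal{V}$; the non-emptiness hypothesis makes this subvariety proper, so its dimension is less than $m$.

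For the second assertion I would interpret the dimensional hypothesis at $A$ using the inclusions $T_{\mathcal{V}^{ns}}(A)\subseteq T_{\mathcal{V}_1^{ns}}(A)\cap T_{\mathcal{V}_2^{ns}}(A)$ and the dimension formula. Combined with $\dim T_{\mathcal{V}^{ns}}(A)=m$, the bound forces the two tangent spaces to meet as transversally as possible inside the minimal enclosing subspace, which by Definition \ref{de1} gives $\sigma(A)<1$, i.e.\ $\alpha(A)>0$, and by Lemma \ref{lm1} exhibits $A$ as a non-trivial intersection point. Thus $A\in\mathcal{V}^{ns,nt}$, triggering the conclusion of the first part.

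The principal obstacle is the algebraicity of the tangentiality condition. While it is intuitive that ``zero angle" should define a Zariski-closed subset of $\mathcal{V}^{ns}$, making this precise requires either a Grassmannian parametrization of the map $A\mapsto(T_{\mathcal{V}_1^{ns}}(A),T_{\mathcal{V}_2^{ns}}(A))$, or an explicit description of the tangential locus as the vanishing of suitable minors of the stacked Jacobians of $\mathbb{I}_{\mathbb{R}}(\mathcal{V}_1)$ and $\mathbb{I}_{\mathbb{R}}(\mathcal{V}_2)$ modulo the tangent space of $\mathcal{V}$. Carefully handling the rank drop on $\mathcal{V}^{ns}$ (where local bases exist but may not be chosen globally polynomially) is where most of the technical effort lies; once it is done, the semicontinuity of rank and the irreducibility of $\mathcal{V}$ finish the argument immediately.
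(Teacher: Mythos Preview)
The paper does not prove this lemma at all: it is quoted verbatim as Theorem~6.6 of Andersson--Carlsson \cite{andersson2013alternating} and used as a black box in the discussion leading to Theorem~\ref{th2}. There is therefore no ``paper's own proof'' to compare your proposal against; the authors simply invoke the result from the literature.

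That said, your sketch is a reasonable outline of how the original proof in \cite{andersson2013alternating} proceeds, and you have correctly identified the crux: one must show that the tangential locus inside $\mathcal{V}^{ns}$ is Zariski closed. Your plan to encode tangentiality through the rank of the stacked Jacobians of generators of $\mathbb{I}_{\mathbb{R}}(\mathcal{V}_1)$ and $\mathbb{I}_{\mathbb{R}}(\mathcal{V}_2)$ (modulo $T_{\mathcal{V}}(A)$) is essentially what Andersson--Carlsson do, and the ``principal obstacle'' you flag---that local bases of the tangent bundles need not be globally polynomial---is handled there by working on the irreducible variety $\mathcal{V}$ and using upper semicontinuity of the relevant rank function, exactly as you anticipate.

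One point to tighten in the second assertion: the hypothesis in the lemma (as stated here, and consistently with equation~\eqref{neweq1} in Theorem~\ref{th3}) concerns the \emph{intersection} $T_{\mathcal{V}_1^{ns}}(A)\cap T_{\mathcal{V}_2^{ns}}(A)$ having dimension at most $m$, not the sum. Since $T_{\mathcal{V}^{ns}}(A)\subseteq T_{\mathcal{V}_1^{ns}}(A)\cap T_{\mathcal{V}_2^{ns}}(A)$ and $\dim T_{\mathcal{V}^{ns}}(A)=m$, the bound forces equality $T_{\mathcal{V}^{ns}}(A)=T_{\mathcal{V}_1^{ns}}(A)\cap T_{\mathcal{V}_2^{ns}}(A)$, which is precisely what makes $\sigma(A)<1$ in Definition~\ref{de1}. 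Your paragraph gestures at this but conflates sum and intersection; making the equality explicit is what actually yields non-tangentiality.
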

 In  practice, we need to check a given variety is irreducible or not,  thus the following results are needed.
\begin{definition}[Definition 6.7 in \cite{andersson2013alternating}]\label{newdf}
  Suppose we are given a number $j\in \mathbb{N}$ and an index set $I$ such that for each $i\in I$, there exist an open connected $\Omega_{i}\subseteq \mathbb{R}^{j}$ and a real analytic map $\phi_{i}:\Omega_{i}\rightarrow \mathcal{V}.$  Then $\mathcal{V}$ is said to be covered with analytic patches, if for each $A\in \mathcal{V},$ there exists an $i\in I$ and a radius $r_{A}$ such that
$$\mathcal{V}_{rn}\cap Ball_{\mathbb{R}^{n}}(A,r_{A})=Im \phi_{i}\cap  Ball_{\mathbb{R}^{n}}(A,r_{A}).$$
\end{definition}

\begin{lemma}[Proposition 6.8 in \cite{andersson2013alternating}]\label{q1}
Let $\mathcal{V}$ be a real algebraic variety. If $\mathcal{V}$ is connected and can be covered with analytic patches, then $\mathcal{V}$ is irreducible.
\end{lemma}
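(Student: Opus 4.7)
The plan is to argue by contradiction, converting a non-trivial decomposition of $\mathcal{V}$ into a clopen partition that is forbidden by connectedness. Suppose $\mathcal{V}=\mathcal{V}_1\cup\mathcal{V}_2$ with each $\mathcal{V}_k$ a real algebraic variety strictly smaller than $\mathcal{V}$. Then I would first pick real polynomials $p$ and $q$ with $p$ vanishing on $\mathcal{V}_1$ but not on all of $\mathcal{V}$, and similarly $q$ vanishing on $\mathcal{V}_2$ but not on all of $\mathcal{V}$, so that $pq$ vanishes identically on $\mathcal{V}$ while neither factor does.

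Next I would pull $pq$ back through an arbitrary analytic patch $\phi_i:\Omega_i\to\mathcal{V}$: the composition $(p\circ\phi_i)(q\circ\phi_i)$ vanishes on the connected open set $\Omega_i\subseteq\mathbb{R}^j$. Since the ring of real-analytic functions on a connected open subset of $\mathbb{R}^j$ is an integral domain, each patch must be either $p$-type ($p\circ\phi_i\equiv 0$) or $q$-type ($q\circ\phi_i\equiv 0$).

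Introduce $\mathcal{U}_p:=\{x\in\mathcal{V}:\text{some } p\text{-type patch has } x \text{ in its image}\}$. By the patch-covering condition of Definition \ref{newdf}, each $p$-type patch image locally agrees with a relative ball neighborhood in $\mathcal{V}$, so $\mathcal{U}_p$ is open. For closedness, take $y_n\to x$ with $y_n\in\mathcal{U}_p$ and any patch $\phi_j$ through $x$; the local coincidence of $\mathrm{Im}(\phi_j)$ with the images of $p$-type patches near $y_n$ shows that $p\circ\phi_j$ vanishes on a non-empty open subset of $\Omega_j$, so the real-analytic identity principle on the connected set $\Omega_j$ forces $p\circ\phi_j\equiv 0$ and hence $x\in\mathcal{U}_p$. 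Since some patch must be $p$-type (else $q$ would vanish on all of $\mathcal{V}$), $\mathcal{U}_p\neq\emptyset$, and connectedness of $\mathcal{V}$ forces $\mathcal{U}_p=\mathcal{V}$, meaning $p$ vanishes on $\mathcal{V}$, contradicting the choice of $p$.

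The main obstacle is the closedness step: one must convert the local equality of patch images from Definition \ref{newdf} into an \emph{open} set in $\Omega_j$ on which $p\circ\phi_j$ vanishes, so that the analytic identity principle actually bites; mere pointwise accumulation of zeros of an analytic function is insufficient. A secondary technicality is that Definition \ref{newdf} covers the ``regular'' subset $\mathcal{V}_{rn}$ rather than all of $\mathcal{V}$, so extending the conclusion from $\mathcal{V}_{rn}$ to $\mathcal{V}$ requires that $\mathcal{V}_{rn}$ be dense in $\mathcal{V}$ (which holds when $\mathcal{V}\setminus\mathcal{V}_{rn}$ lies in a lower-dimensional subvariety), after which continuity of $p$ closes the argument.
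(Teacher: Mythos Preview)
The paper does not supply its own proof of this lemma; it is quoted verbatim as Proposition~6.8 of Andersson--Carlsson \cite{andersson2013alternating} and used as a black box. So there is no in-paper argument to compare against.

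That said, your strategy is the standard one and is essentially the argument given in the cited reference: exploit that real-analytic functions on a connected open subset of $\mathbb{R}^j$ form an integral domain to force each patch to kill either $p$ or $q$, then run a clopen argument on the union of $p$-type patches and invoke connectedness. The two issues you flag are precisely the ones that need care. For the closedness step, the fix is to use the \emph{local equality} in Definition~\ref{newdf} twice: near the limit point $x$ the image of the patch $\phi_j$ coincides with $\mathcal{V}_{rn}$ on a whole ball, and near the approximating $y_n$ the image of its $p$-type patch $\phi_{i_n}$ also coincides with $\mathcal{V}_{rn}$ on a ball; the overlap of these balls then gives a genuine relatively-open subset of $\mathrm{Im}(\phi_j)$ on which $p$ vanishes, and one checks that the preimage of this set in $\Omega_j$ has nonempty interior (this is where the fixed ambient dimension $j$ of all patches matters), so the identity principle applies. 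For the $\mathcal{V}_{rn}$ versus $\mathcal{V}$ issue, the clopen argument really takes place on the patch-covered subset, and one passes to all of $\mathcal{V}$ by continuity of $p$ together with density, exactly as you say.
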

The following lemma show us a method to compute the dimension of a variety.
\begin{lemma}[Proposition 6.9 in \cite{andersson2013alternating}]\label{q2}
Under the assumption of Lemma \ref{q1}, suppose in addition that an open subset of $\mathcal{V}$ is the image of a bijective real analytic map
defined on  a subset of $\mathbb{R}^{d}.$ Then $\mathcal{V}$ has dimension $d$.
\end{lemma}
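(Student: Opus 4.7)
The plan is to combine the algebraic-geometric stratification of $\mathcal{V}$ with local rank analysis of the real analytic parameterization $\phi$. First I would invoke Lemma \ref{q1} to conclude that $\mathcal{V}$ is irreducible, and set $m:=\dim \mathcal{V}$. Theorem \ref{whi} together with Lemma \ref{new1} then identifies the non-singular locus $\mathcal{V}^{ns}$ as a $C^{\infty}$-manifold of dimension $m$ that is open and dense in $\mathcal{V}$ in the Euclidean topology (its complement being a proper subvariety). The hypothesis furnishes a bijective real analytic map $\phi:\Omega\to U$ from an open set $\Omega\subseteq \mathbb{R}^{d}$ onto an open subset $U$ of $\mathcal{V}$. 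Openness of $U$ in $\mathcal{V}$ together with density of $\mathcal{V}^{ns}$ guarantees a point $A\in U\cap \mathcal{V}^{ns}$, and I would set $x:=\phi^{-1}(A)\in \Omega$.

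Next, I would prove the two inequalities $m\leq d$ and $d\leq m$ separately. For $m\leq d$: shrinking around $A$, one has a neighborhood of $A$ in $U$ that is simultaneously an open subset of the $m$-dimensional smooth manifold $\mathcal{V}^{ns}$ and the $\phi$-image of an open subset of $\mathbb{R}^{d}$. Since no $C^{1}$ image of $\mathbb{R}^{d}$ can contain an open subset of a submanifold of dimension strictly greater than $d$ (by Sard's theorem, or more elementarily by the pointwise rank bound $\mathrm{rank}\, d\phi(y)\leq d$), the conclusion $m\leq d$ follows.

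For $d\leq m$: I would use real analyticity to observe that the rank of $d\phi$ attains its maximum value $k$ on a dense open subset $\Omega_{0}\subseteq \Omega$. Bijectivity of $\phi$ then forces $k=d$, since if $k<d$ the constant-rank theorem would produce positive-dimensional fibers of $\phi$ on $\Omega_{0}$ and violate injectivity. Picking $y\in \Omega_{0}$ with $\phi(y)\in \mathcal{V}^{ns}$ (possible by density of $\mathcal{V}^{ns}$ in $U$), the constant-rank theorem realises $\phi$ as a local immersion whose image is a $d$-dimensional real analytic submanifold sitting inside the $m$-dimensional manifold $\mathcal{V}^{ns}$; hence $d\leq m$.

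Combining the two inequalities yields $d=m$ as claimed. The main obstacle I expect is the rank-maximality step, where global bijectivity must be converted into the local statement that the generic rank of $d\phi$ equals $d$; this relies essentially on the constant-rank theorem for analytic maps, together with the observation that any positive-dimensional fibre over a point of $\Omega_{0}$ would immediately contradict injectivity of $\phi$.
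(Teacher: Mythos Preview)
The paper does not actually prove this lemma: it is quoted verbatim as Proposition~6.9 of \cite{andersson2013alternating} and used as a black box in the proof of Theorem~\ref{q3}. There is therefore no in-paper argument to compare your proposal against.

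That said, your outline is a correct and self-contained proof. A couple of minor points worth tightening. First, the hypothesis literally says the map is defined on ``a subset of $\mathbb{R}^{d}$'', and you silently upgrade this to an \emph{open} subset; this is harmless since real analyticity requires an open domain and is exactly how the lemma is applied in Theorem~\ref{q3}, but you should make the assumption explicit. Second, in the step $d\leq m$ you need $\phi^{-1}(\mathcal{V}^{ns})$ to meet the dense open set $\Omega_{0}$ where the rank is maximal; this follows because the singular locus $\mathcal{V}\setminus\mathcal{V}^{ns}$ is Euclidean-closed in the ambient space (being a subvariety), so $\phi^{-1}(\mathcal{V}^{ns})$ is open in $\Omega$ and nonempty, hence intersects $\Omega_{0}$. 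With these two clarifications your argument goes through: the Sard-type bound gives $m\leq d$, and the constant-rank/injectivity step forces the generic rank of $d\phi$ to be $d$, yielding a local $d$-dimensional immersed submanifold inside the $m$-manifold $\mathcal{V}^{ns}$ and hence $d\leq m$.
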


 For an arbitrary quaternion matrix ${\bf A}=A_{0}+A_{1}{\bf i} +A_{2}{\bf j} +A_{3}{\bf k}\in \mathbb{H}^{m\times n},$ denote
\begin{align}\label{v5}
\mathcal{V}_{sr}:=\left\{X=
           \begin{pmatrix}
             A_{0} & -A_{1} & A_{2} & -A_{3} \\
             A_{1} & A_{0} &-A_{3} & A_{2} \\
             -A_{2} & A_{3}& A_{0} & -A_{1} \\
             A_{3} & -A_{2} & A_{1} & A_{0} \\
           \end{pmatrix}
\in \mathbb{R}^{4m\times 4n}, \ \rank(X)\leq 4r\right\}.
\end{align}
Then  we can derive the following results.

\begin{theorem}\label{q3}
Let $\mathcal{M}_{*}$ and $\mathcal{V}_{sr}$ be  given as in \eqref{v2} and \eqref{v5}, respectively. Then $\mathcal{M}_{*}$ is an affine subspaces of dimension
$3mn,$ $\mathcal{V}_{sr}$ is an irreducible real algebraic variety of dimension $4(m+n)r-4r^2$, and
\begin{align}\label{v4}
\mathcal{V}:=\mathcal{V}_{sr}\cap \mathcal{M}_{*}=\left\{X=
           \begin{pmatrix}
            0 & -A_{1} & A_{2} & -A_{3} \\
             A_{1} & 0 &-A_{3} & A_{2} \\
             -A_{2} & A_{3}& 0 & -A_{1} \\
             A_{3} & -A_{2} & A_{1} &0 \\
           \end{pmatrix}
\in \mathbb{R}^{4m\times 4n}, \ \rank(X)\leq 4r\right\}
\end{align} is an irreducible algebraic variety of dimension $3(m+n)r-3r^2$.
\end{theorem}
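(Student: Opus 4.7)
The plan is to verify the three claims in sequence using Lemmas \ref{q1} and \ref{q2}. The statement about $\mathcal{M}_*$ is immediate: the map $(A_1, A_2, A_3) \mapsto X$ implicit in \eqref{v2} is a linear injection from $(\mathbb{R}^{m\times n})^3$, so $\mathcal{M}_*$ is an affine subspace of real dimension $3mn$.

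For $\mathcal{V}_{sr}$, the first task is to recognize it as a real algebraic variety, which is immediate since it is cut out of $\mathbb{R}^{4m\times 4n}$ by the linear equations enforcing the $\Gamma$-block structure in \eqref{v5} together with the polynomial equations requiring every $(4r+1)\times(4r+1)$ minor of $X$ to vanish. To apply Lemma \ref{q1}, I would establish connectedness through the scaling homotopy $t\mapsto tX$ for $t\in[0,1]$, which preserves both defining constraints and contracts $\mathcal{V}_{sr}$ to the origin, and I would supply analytic patches via the quaternion Schur complement: near any $\hat{\mathbf{A}}\in\mathcal{V}_{sr}$ of rank exactly $4r$ some $r\times r$ subblock $\mathbf{A}_{11}$ is invertible, and then the map
\[
(\mathbf{A}_{11},\mathbf{A}_{12},\mathbf{A}_{21})\ \longmapsto\ \phi\!\left(\begin{pmatrix}\mathbf{A}_{11}&\mathbf{A}_{12}\\ \mathbf{A}_{21}& \mathbf{A}_{21}\mathbf{A}_{11}^{-1}\mathbf{A}_{12}\end{pmatrix}\right)
\]
is a real analytic bijection from an open subset of $\mathbb{H}^{r\times r}\times\mathbb{H}^{r\times (n-r)}\times\mathbb{H}^{(m-r)\times r}$ onto a relatively open subset of $\mathcal{V}_{sr}$. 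Lemma \ref{q1} then yields irreducibility, and Lemma \ref{q2} applied to the chart computes $\dim\mathcal{V}_{sr}=4r^2+4r(n-r)+4r(m-r)=4(m+n)r-4r^2$.

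The argument for $\mathcal{V}$ follows the same template. Algebraicity is automatic from $\mathcal{V}=\mathcal{V}_{sr}\cap\mathcal{M}_*$, and the scaling homotopy $t\mapsto tX$ again gives connectedness since it also preserves the linear pure-quaternion constraint. For the analytic patches, I would restrict the Schur-complement chart above to pure-quaternion input blocks $(\mathbf{A}_{11},\mathbf{A}_{12},\mathbf{A}_{21})$, whose combined real dimension is $3r^2+3r(n-r)+3r(m-r)=3(m+n)r-3r^2$; Lemma \ref{q1} yields irreducibility and Lemma \ref{q2}, applied to this chart, delivers the claimed dimension.

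I expect the main obstacle to lie in the $\mathcal{V}$-step, specifically in verifying that the restriction of the Schur chart to pure blocks is really a real analytic bijection onto a relatively open subset of $\mathcal{V}$ of the stated dimension. The subtle point is the interaction between the pure-quaternion and low-rank constraints: the Schur complement $\mathbf{A}_{21}\mathbf{A}_{11}^{-1}\mathbf{A}_{12}$ must cooperate with the pure input blocks to determine a bona fide element of $\mathcal{V}$, and one needs to show that the resulting chart domain is the correct $3(m+n)r-3r^2$-dimensional object. Once this technical point is settled, Lemmas \ref{q1} and \ref{q2} close the argument just as in the $\mathcal{V}_{sr}$ case.
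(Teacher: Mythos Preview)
Your treatment of $\mathcal{M}_*$ and $\mathcal{V}_{sr}$ is correct and in fact cleaner than the paper's: where the paper parametrizes $\mathcal{V}_{sr}$ through the quaternion SVD factorization \eqref{qj1} and counts degrees of freedom in the factors $U,\Sigma,V$, your Schur--complement chart delivers irreducibility and the dimension $4(m+n)r-4r^2$ with less bookkeeping, and the scaling homotopy $t\mapsto tX$ is a simpler connectedness argument than the paper's explicit path to a fixed reference matrix.

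The gap you flag in the $\mathcal{V}$-step, however, is genuine and cannot be closed along the lines you suggest. If $\mathbf{A}_{11},\mathbf{A}_{12},\mathbf{A}_{21}$ are pure quaternion blocks, the Schur complement $\mathbf{A}_{21}\mathbf{A}_{11}^{-1}\mathbf{A}_{12}$ is \emph{not} pure in general: already for $r=1$ and scalar pure quaternions $p,q,s$ one computes $\re(q\,p^{-1}s)=|p|^{-2}\,(q\times p)\cdot s$, which vanishes only on a hypersurface. Hence your restricted map does not land in $\mathcal{V}$; to force it to do so one must impose the $(m-r)(n-r)$ additional real equations $\re(\mathbf{A}_{21}\mathbf{A}_{11}^{-1}\mathbf{A}_{12})=0$ on the space of pure triples, and these are generically independent constraints. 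In particular the chart domain is \emph{not} an open subset of $\mathbb{R}^{3(m+n)r-3r^2}$, so Lemma~\ref{q2} does not apply as you intend and your dimension count for $\mathcal{V}$ is unjustified. The paper takes a different route here, working with the SVD parametrization and absorbing the real-part constraint \eqref{eq4} into the factor $U_0$ rather than into the matrix blocks; whatever the merits of that argument, your Schur--complement approach needs a genuinely new idea at this point, since no choice of pure input blocks will make the output automatically pure.
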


\begin{proof}
First, denote
\begin{align*}
\Gamma:=\left\{X=
            \left(\begin{array}{cccc}
             A_{0} & -A_{1} & A_{2} & -A_{3} \\
             A_{1} & A_{0} &-A_{3} & A_{2} \\
             -A_{2} & A_{3}& A_{0} & -A_{1} \\
             A_{3} & -A_{2} & A_{1} & A_{0} \\
           \end{array}\right)
\in \mathbb{R}^{4m\times 4n},~A_{i},i=0,1,2,3\in \mathbb{R}^{m\times n}\right\}.
\end{align*} % And it is well known that if $\rank({\bf A})=r$ then $\rank(\hat{{\bf A}})=4r.$
Obviously, $\Gamma$ is a linear space of dimension $4mn$.
The set $\mathcal{M}_{*}$ is obtained by adding the constraint $A_{0}=0$ to $\Gamma$.
Thus, it is an affine space with dimension $3mn$.

Second, we will show that $\mathcal{V}_{sr}$ is   an irreducible real algebraic variety with dimension $4(m+n)r-4r^2$.
It is well known that a matrix  has rank $r$ if and only if there exists at least a non-zero $r\times r$ nonsingular minor which is a matrix obtained by deleting $n-r$ rows and columns and all the $(r+1)\times (r+1)$ minors are zeros.
Then if a matrix in $ \Gamma$ has rank $4r$ then there exists at least a non-zero $4r\times 4r$ invertible minor and all $(4r+1)\times (4r+1)$ minors are zero.
Hence, $\mathcal{V}_{sr}$ is the variety induced by the determinants of these minors.
By the quaternion singular value decomposition given in \cite{zhang1997quaternions}, any $A\in \Gamma$ with $\rank(A)\leq 4r$ can be factorized into
\begin{align}\label{qj1}
{ \scriptsize
A=USV^{T}=\left(
           \begin{array}{cccc}
             U_{0} & -U_{1} & U_{2} & -U_{3} \\
             U_{1} & U_{0} &-U_{3} & U_{2} \\
             -U_{2} & U_{3}& U_{0} & -U_{1} \\
             U_{3} & -U_{2} & U_{1} & U_{0} \\
           \end{array}
         \right)\left(
                   \begin{array}{cccc}
                     \Sigma & 0 & 0 & 0 \\
                     0 &  \Sigma & 0 & 0 \\
                     0 & 0 &  \Sigma & 0 \\
                     0 & 0 & 0 &  \Sigma \\
                   \end{array}
                 \right)\left(
           \begin{array}{cccc}
             V_{0} & -V_{1} & V_{2} & -V_{3} \\
             V_{1} & V_{0} &-V_{3} & V_{2} \\
             -V_{2} & V_{3}& V_{0} & -V_{1} \\
             V_{3} & -V_{2} & V_{1} & V_{0} \\
           \end{array}
         \right)^T,}
\end{align}
with $U_{i}\in \mathbb{R}^{m\times r}$, $V_{i}\in \mathbb{R}^{n\times r}$ ($i=0,1,2,3$) and $\Sigma \in \mathbb{R}^{r\times r}$ being a diagonal matrix.
In the other hand, if a  matrix $A$ can be expressed as \eqref{qj1}, then $\rank(A)\leq 4r$, i.e, $A\in \mathcal{V}_{sr}$.  We see that $\mathcal{V}_{sr}$ is connected and can be covered with one real polynomial. Then $\mathcal{V}_{sr}$ is irreducible by Lemma  \ref{q1}.

Next, choose a subset of $\mathcal{V}_{sr}$ with the diagonal elements of $\Sigma$ being nonzero and different with each other.  Then the freedom of the column unitary matrices $U\in\mathbb{R}^{4m\times 4r}$ and $V\in\mathbb{R}^{4n\times 4r}$ in \eqref{qj1} are  $4mr-\frac{r(4r+1)}{2}$ and $4nr-\frac{r(4r+1)}{2}$, respectively. And the freedom of the diagonal  matrix $S\in\mathbb{R}^{4r\times 4r}$ is $r$.
 Thus, the subsets of such matrices can be identified with $\mathbb{R}^{4mr-\frac{r(4r+1)}{2}}$, $\mathbb{R}^{4nr-\frac{r(4r+1)}{2}}$ and $\mathbb{R}^{r}$, respectively.
Denote the inverses of the identification by
\begin{align}
\iota_{1} : \mathbb{R}^{4mr-\frac{r(4r+1)}{2}}\rightarrow \mathbb{R}^{4m\times 4r};~ \iota_{2} : \mathbb{R}^{r}\rightarrow \mathbb{R}^{4r\times 4r};~\iota_{3} : \mathbb{R}^{4mr-\frac{r(4r+1)}{2}}\rightarrow \mathbb{R}^{4n\times 4r};
 \end{align}
and denote $\Omega\subset  \mathbb{R}^{4(m+n)r-4r^2}$ as the open set corresponding to those matrices with $\Sigma$ possessing different diagonal elements.
%Then we can identify the set of such matrices with $\mathbb{R}^{4(m+n)r-4r^2}.$
Define $\phi: \Omega\rightarrow \mathcal{V}_{sr}$ by
\begin{equation}\label{qj2}
\phi(y_{1},y_{2},y_{3})= {\Large \iota_{1}(y_{1})\cdot \iota_{2}(y_{2})\cdot(\iota_{3}(y_{3}))^{T}}.
\end{equation}
It is easy to see that $\phi$ is a polynomial and moreover a bijective correspondence with an open set $\Omega.$
Thus it follows Lemma \ref{q2} that the dimension of $\mathcal{V}_{sr}$ is $4(m+n)r-4r^2.$

Third, we turn our attention to $\mathcal{V}=\mathcal{V}_{sr}\cap \mathcal{M}_{*}$.
Note that $\mathcal{V}$  is obtained by adding the algebraic equations
\begin{equation}\label{eq4}
U_{0}\Sigma V_{0}^T-U_{1}\Sigma V _{1}^T-U_{2}\Sigma V_{2}^T-U_{3}\Sigma V_{3}^T=0
\end{equation} to those entries of matrices in defining $\mathcal{V}_{sr},$  then it is also a  real algebraic variety.

Then, we will apply Lemma \ref{q1}-\ref{q2} to show $\mathcal{V}$ is an irreducible real algebraic variety with dimension $3(m+n)r-3r^2$.

Let ${U}\in \mathbb{R}^{4m\times 4r}$, ${V}\in \mathbb{R}^{4n\times 4r}$ and  ${S}\in \mathbb{R}^{4r\times 4r}$ be defined as \eqref{qj1}.
We set all the elements of $U_{0}$ as  undetermined variables and other values are fixed. Then the $m\times n$ linear equations in \eqref{eq4} relate to the undetermined variables $(U_{0})_{i,j},i=1,...,m, j=1,...,r$ may have $0,1$ or infinite solutions (the number of solutions was decided by the property of $V,\Sigma,$ and the other variables of $U$). Suppose the remaining values are chosen such that the equations have a unique solution relate to every undermined variable $(U_{0})_{i,j},i=1,...,m, j=1,...,r,$ respectively.  Denote the corresponding matrix by $\hat{U}$, after these $(U_{0})_{i,j},i=1,...,m, j=1,...,r$ are fixed. Then,  a real analytic mapping $\theta$ from $\hat{U}$,  $S$ and  ${V}$  to $\mathcal{V}$ can be constructed as follows:
\begin{align} \label{ineq1}
&\theta((U_{1})_{1,1},...,(U_{1})_{m,r},...,(U_{3})_{1,1},...,(U_{3})_{m,r},\sigma_{1},...,\sigma_{r},(V_{0})_{1,1},...,(V_{3})_{n,r}) \nonumber \\
 =&\left(\begin{array}{cccc}
             U_{0} & -U_{1} & U_{2} & -U_{3} \\
             U_{1} & U_{0} &-U_{3} & U_{2} \\
             -U_{2} & U_{3}& U_{0} & -U_{1} \\
             U_{3} & -U_{2} & U_{1} & U_{0} \\
           \end{array}
         \right)\left(
                   \begin{array}{cccc}
                     \Sigma & 0 & 0 & 0 \\
                     0 &  \Sigma & 0 & 0 \\
                     0 & 0 &  \Sigma & 0 \\
                     0 & 0 & 0 &  \Sigma \\
                   \end{array}
                 \right)\left(
           \begin{array}{cccc}
             V_{0} & -V_{1} & V_{2} & -V_{3} \\
             V_{1} & V_{0} &-V_{3} & V_{2} \\
             -V_{2} & V_{3}& V_{0} & -V_{1} \\
             V_{3} & -V_{2} & V_{1} & V_{0} \\
           \end{array}
         \right)^T \nonumber\\
= &\left(
     \begin{array}{cccc}
       W_{0} & -W_{1} & W_{2} & -W_{3} \\
       W_{1} &W_{0}  &- W_{3} & W_{2} \\
       -W_{2} & W_{3} & W_{0} & -W_{1} \\
       W_{3} & -W_{2} & W_{1} & W_{0} \\
     \end{array}
   \right),
\end{align}
with
\begin{align*}
 W_{0}=U_{0}\Sigma V_{0}^T-U_{1}\Sigma V _{1}^T-U_{2}\Sigma V_{2}^T-U_{3}\Sigma V_{3}^T,
 W_{1}=-U_{0}\Sigma V_{1}^T-U_{1}\Sigma V _{0}^T+U_{2}\Sigma V_{3}^T-U_{3}\Sigma V_{2}^T,\\
 W_{2}=U_{0}\Sigma V_{2}^T+U_{1}\Sigma V _{3}^T-U_{2}\Sigma V_{0}^T-U_{3}\Sigma V_{1}^T,
 W_{3}=-U_{0}\Sigma V_{3}^T-U_{1}\Sigma V _{2}^T-U_{2}\Sigma V_{1}^T-U_{3}\Sigma V_{0}^T.
\end{align*}
Note that the entries of $W_{0}$  in (\ref{ineq1}) can be zeros when the variables in $\hat{U}$,  $S$ and  ${V}$ are chosen as above which can guarantee  equations in \eqref{eq4} are satisfied.
%Such $m$ undetermined variables are decoupled in these inequalities.
%There are infinitely many real solutions of $m$ undetermined variables for given values of $\hat{U}$, $\Sigma$, and $V$.
It is saying that $\mathcal{V}$ is the image of $\theta$.
Let $\Gamma$ be a particular connected component of $(\hat{U}, \Sigma, V)$.
We establish a function $\psi$ with  $\Gamma$ as follows:
\begin{equation}\label{tr1}
\psi_{\Gamma}(y) = U(y) \Sigma(y) V(y)^T, ~~y \in \Gamma.
\end{equation}
Denote $\mathbb{I}$ as the set of all possible $\pi$ and $\Gamma$.
It can be found that for each matrix in $\mathcal{V}$ is in the image of at least one $\psi_{\Gamma}$ where
$ \Gamma \in \mathbb{I}$.
Then by Definition \ref{newdf},
$\mathcal{V}$ can be covered by $\{ \psi_{\Gamma} \}_{\Gamma \in \mathbb{I}}$.

Furthermore, in order to show $\mathcal{V}$ is irreducible we need to show $\mathcal{V}$ is connected. It is sufficient to prove $\mathcal{V}$ is path connected, i.e., for any two matrices $A, B\in \mathcal{V}$,
there exist a continuous map $f$ from the unit interval $[0,1]$ to $\mathcal{V}$ such that $f(0) = A$ and $f(1) = B$.
Without loss of generality, we show that
for an arbitrary $A \in \mathcal{V}_{rn}$, it is connected with \begin{equation*}
X=\left(
  \begin{array}{cccc}
    \hat{\mathbf{1}} & \hat{\mathbf{1}} & \cdots & \hat{\mathbf{1}} \\
    \hat{\mathbf{1}} & \hat{\mathbf{1}} & \cdots & \hat{\mathbf{1}} \\
    \vdots & \vdots & \ddots & \vdots \\
    \hat{\mathbf{1}} &  \hat{\mathbf{1}}& \cdots & \hat{\mathbf{1}} \\
  \end{array}
\right)
~\text{with}~
\hat{\mathbf{1}}=\left(
  \begin{array}{cccc}
    0 & -1 & 1 & -1 \\
    1 & 0 & -1 & 1 \\
    -1& 1 & 0 & -1 \\
    1& -1& 1 & 0 \\
  \end{array}
\right)
\end{equation*} instead.
%It is sufficient to prove $\mathcal{V}$ is path connected.
Suppose that $A, B\in \mathcal{V}$ are arbitrary and path connected with the $X$ matrix, respectively. Thus, there are two continuous  maps $f$ and $g$  which are from the unit interval $[0,1]$ to $\mathcal{V}$  with $f(0) = A$, $f(1) = X$,
$g(0)=X$ and $g(1)=B$. Setting $\tau(x)=(1-x)f(x)+xg(x)$, it is easy to see that $\tau(x)$ is continuous and satisfying $\tau(0)=f(0)=A$ and $\tau(1)=g(1)=B$. Then $\mathcal{V}$ is  path connected.
%We claim that any $B\in \mathcal{V}$ is path connected with the matrix
%\begin{equation*}
%X=\left(
%  \begin{array}{cccc}
%    \hat{\mathbf{1}} & \hat{\mathbf{1}} & \hat{\mathbf{1}} & \hat{\mathbf{1}} \\
%    \hat{\mathbf{1}} & \hat{\mathbf{1}} & \hat{\mathbf{1}} & \hat{\mathbf{1}} \\
%    \hat{\mathbf{1}} & \hat{\mathbf{1}} & \hat{\mathbf{1}} & \hat{\mathbf{1}} \\
%    \hat{\mathbf{1}} &  \hat{\mathbf{1}}& \hat{\mathbf{1}} & \hat{\mathbf{1}} \\
%  \end{array}
%\right)
%~\text{with}~
%\hat{\mathbf{1}}=\left(
%  \begin{array}{cccc}
%    0 & -1 & 1 & -1 \\
%    1 & 0 & -1 & 1 \\
%    -1& 1 & 0 & -1 \\
%    1& -1& 1 & 0 \\
%  \end{array}
%\right).
%\end{equation*}
% %where all elements of $\mathbf{1}$ equal to 1 and all elements of $\mathbf{0}$ equal to 0.
%Note that the subset of $\mathcal{V}$ expressed as \eqref{qj1} with all elements of $U$ are non-zero and dense in $\mathcal{V}$.
%Hence, it suffices to find a path from such an element to $X$.
Let $A$ be fixed.
 % Now all values in $\Sigma$ are non-negative, for then the diagonal values of $B$ would be as well.
We assume that the diagonal elements of $\Sigma$ are nonnegative and decreasingly ordered and $\Sigma_{ii}=1, i=1,2,3,4$.
Pick $\sigma$ such that $\sigma (i)=\sigma(j)=k$ for all $i,j$ and choose $\Omega$ such that the representation is in the form \eqref{qj2}.
If the second diagonal value in $\Sigma$ is negative, then we continuously change it to the positivity inside $\Omega$.
Then the values of $y$ corresponding to columns 1 through 8 of $U$ can be continuously moved until all elements of the columns 1 through 4 as $[\mathbf{1},\mathbf{1},\cdots,\mathbf{1}]^T$ with
  \begin{align*}\label{mat1}
  {\bf{1}}=\left(
                                                                         \begin{array}{cccc}
                                                                           1 & -1 & 1 &-1 \\
                                                                           1 & 1 & -1 & 1 \\
                                                                           -1 & 1 & 1 & -1 \\
                                                                           1 & -1 & 1 & 1 \\
                                                                         \end{array}
                                                                       \right).
  \end{align*}
At this point, all values of $U$ except the first fourth columns vanish, increasing the first value of each row whenever necessary to stay inside $\Omega$.
We can move $y$ so that the columns 1  though 4  of $V$ can be written as $[\mathbf{a},\mathbf{a},\cdots,\mathbf{a}]^T$ which satisfies  $\mathbf{1}\cdot \mathbf{a}=\hat{\mathbf{1}}$. Thus,  the matrix $\hat{\mathbf{1}}$ can be arrived which is saying that $\mathcal{V}$ is connected.

In the end, we need to determine the dimension of $\mathcal{V}$.  Consider again the map introduced earlier as \eqref{qj2}, with the difference that the diagonal blocks are zeros. In order to guarantee \eqref{eq4} is satisfied, $(m+n)r-r^2$ additional constraints are added on these variables.
%Note that map introduced earlier as \eqref{qj2}, \textcolor{red}{\Large with the difference the diagonal blocks are zeros, which is saying that  some elements lose there freedom and are determined by other variables such that \eqref{eq4} are satisfied.}
%Note that there are $(m+n)r-r^2$ constraints added on the variables.
%Thus, the free variables is $3(m+n)r-3r^2$.
It is naturally to define a real analytic map on the open subset $\Xi$ of $\mathbb{R}^{3(m+n)r-3r^2}$.
By \eqref{qj1},  there exist three matrices $U,V$ and $S$ such that $A=US V^{T}$. The sets of $U\in \mathbb{R}^{4m\times 4r},~S =\diag\{\Sigma,\Sigma,\Sigma,\Sigma\}\in \mathbb{R}^{4r\times 4r}$ and  $V\in \mathbb{R}^{4n\times4 r}$ contain $3mr-\frac{r(3r+1)}{2}, r,$ and $ 3nr-\frac{r(3r+1)}{2}$ independent variables, respectively.
Therefore, $U, S$ and $V$ identify the set of matrices with $\mathbb{R}^{3mr-\frac{r(3r+1)}{2}}, $ $\mathbb{R}^{r}$ and $\mathbb{R}^{3nr-\frac{r(3r+1)}{2}},$ respectively.
Then we can identify the set of such matrices with $\mathbb{R}^{3(m+n)r-3r^2}.$
Denote the inverse of the identification as
\begin{align*}
\iota_{1} : \mathbb{R}^{3mr-\frac{r(3r+1)}{2}}\rightarrow \mathbb{R}^{4m\times 4r}; ~\iota_{2} : \mathbb{R}^{r}\rightarrow \mathbb{R}^{4r\times 4r};~\iota_{3} : \mathbb{R}^{3mr-\frac{r(3r+1)}{2}}\rightarrow \mathbb{R}^{4n\times 4r};
\end{align*}
and $\Omega\subset  \mathbb{R}^{3(m+n)r-3r^2}$ as the open set corresponding to matrices with this structure.
$\Psi$ is a bijection with an open subset of $\mathcal{V}$.
Hence,  by  Lemma \ref{q2} we can derive the dimension of $\mathcal{V}$ is $3(m+n)r-3r^2.$
\end{proof}
 Moreover, we can get the following results.
\begin{theorem}\label{th3}
 Suppose that $\mathcal{M}_{r},$  $\mathcal{M}_{*}$ and $\mathcal{V}_{sr}$ are defined as \eqref{v1}, \eqref{v2} and \eqref{v5}, then $\mathcal{V}_{sr}^{ns}=\mathcal{M}_{r}$ and
\begin{equation}\label{neweq1}
\dim(\spn(T_{\mathcal{M}_{r}}(A)\cap T_{\mathcal{M}_{*}}(A))\leq 3(m+n)r-3r^2.
\end{equation}
%\begin{equation*}
%\mathcal{V}_{sr}^{ns}=\mathcal{M}_{r},~\mathcal{V}^{ns,nt}\neq \emptyset.
%\end{equation*}
 \end{theorem}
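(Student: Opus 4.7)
The statement has two parts, and I would handle them in sequence.

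For the first part, $\mathcal{V}_{sr}^{ns} = \mathcal{M}_{r}$: Theorem \ref{q3} gives $\mathcal{V}_{sr}$ as an irreducible real algebraic variety of dimension $d := 4(m+n)r - 4r^2$, so by Lemma \ref{new1} the non-singular locus $\mathcal{V}_{sr}^{ns}$ is exactly the top-dimensional stratum in the Whitney decomposition of Theorem \ref{whi}. Lemma \ref{qm1} shows $\mathcal{M}_{r}$ is already an embedded $d$-dimensional submanifold of $\mathbb{R}^{4m\times 4n}$, and the parametrization $\phi$ in \eqref{qj2} restricts to a local diffeomorphism of $\mathcal{V}_{sr}$ onto a neighborhood of each point of $\mathcal{M}_{r}$; hence $\mathcal{M}_{r} \subseteq \mathcal{V}_{sr}^{ns}$. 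For the reverse inclusion, any $A \in \mathcal{V}_{sr}\setminus\mathcal{M}_{r}$ has quaternion rank strictly less than $r$ and admits rank-raising infinitesimal perturbations tangent to $\mathcal{V}_{sr}$, so its Zariski tangent space strictly exceeds dimension $d$; Lemma \ref{q11} then marks $A$ as singular.

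For the bound \eqref{neweq1}, since $\mathcal{M}_{*}$ is a linear subspace we have $T_{\mathcal{M}_{*}}(A) = \mathcal{M}_{*}$. Let $\pi:\mathbb{R}^{4m\times 4n}\to\mathbb{R}^{m\times n}$ be the linear projection onto the upper-left $A_0$-block, so $\ker\pi = \mathcal{M}_{*}$. Then
\[
T_{\mathcal{M}_{r}}(A)\cap T_{\mathcal{M}_{*}}(A) = \ker\bigl(d\pi_{A}\big|_{T_{\mathcal{M}_{r}}(A)}\bigr),
\]
and by rank--nullity the bound reduces to establishing $\rank(d\pi_{A}|_{T_{\mathcal{M}_{r}}(A)}) \ge (m+n)r - r^2$. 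My plan is to transport the computation through the local SVD parametrization. Pick a non-singular point $A$ of $\mathcal{V}$, which exists because Theorem \ref{q3} shows $\mathcal{V}$ is irreducible of dimension $3(m+n)r - 3r^2$. Let $\omega_0 := \phi^{-1}(A)$; since $\phi$ is a local diffeomorphism near $\omega_0$, its differential $d\phi_{\omega_0}$ is an isomorphism onto $T_{\mathcal{M}_{r}}(A)$. The composition $\pi\circ\phi$ vanishes exactly on $\phi^{-1}(\mathcal{V})$, a real algebraic subvariety of $\Omega$ of dimension $3(m+n)r - 3r^2$, smooth at $\omega_0$ because $A$ is non-singular in $\mathcal{V}$. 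At a smooth point the analytic tangent space coincides with the kernel of the differential, so $\dim\ker d(\pi\circ\phi)_{\omega_0} = 3(m+n)r - 3r^2$; rank--nullity on the $d$-dimensional domain then gives $\rank d(\pi\circ\phi)_{\omega_0} = (m+n)r - r^2$. The isomorphism $d\phi_{\omega_0}$ transfers this rank unchanged to $d\pi_{A}|_{T_{\mathcal{M}_{r}}(A)}$, delivering \eqref{neweq1}.

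The principal difficulty is the calibration step that converts the variety dimension $\dim\phi^{-1}(\mathcal{V}) = 3(m+n)r - 3r^2$ into the analytic rank of $d(\pi\circ\phi)_{\omega_0}$: one must know that $\omega_0$ is a smooth point of $\phi^{-1}(\mathcal{V})$ so that its Zariski tangent space has exactly the expected dimension. This is secured by choosing $A$ inside the open dense non-singular locus of $\mathcal{V}$ (available by Theorem \ref{q3}) and invoking Lemma \ref{q11} to identify the analytic and Zariski tangent dimensions there. A naive attempt to bound the rank by counting the $mn$ scalar equations $W_0 = 0$ of the proof of Theorem \ref{q3} directly on $T_{\mathcal{M}_{r}}(A)$ would overestimate, because those equations are forced to be redundant to the tune of $mn - ((m+n)r - r^2)$ independent relations coming from the block-quaternion structure and unitary gauge; the parametrization-based argument sidesteps this bookkeeping cleanly. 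Once \eqref{neweq1} is in place, the automatic inclusion $T_\mathcal{V}(A)\subseteq T_{\mathcal{M}_{r}}(A)\cap T_{\mathcal{M}_{*}}(A)$ together with Theorem \ref{q3} promotes it to an equality, which is precisely the form of tangent-space compatibility required by Lemmas \ref{lm1} and \ref{lm2} to produce the non-tangential intersection points driving the convergence analysis of the alternating-projections algorithm in the next section.
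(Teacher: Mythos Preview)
Your handling of $\mathcal{V}_{sr}^{ns}=\mathcal{M}_r$ is fine and parallel in spirit to the paper's argument, which works directly with Lemma~\ref{q11}: reducing by unitary invariance to a diagonal $S_j$, counting gradients of $(4r{+}1)$-minor determinants when $j=r$, and, when $j<r$, exhibiting rank-one tangent curves $x\mapsto S_j+x\tilde U\tilde V^{T}$ whose derivatives span all of $\Gamma$, forcing $\dim N_{\mathcal{V}_{sr}}(S_j)=0$.

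The genuine gap is in your argument for \eqref{neweq1}. You correctly identify $\dim\bigl(T_{\mathcal{M}_r}(A)\cap T_{\mathcal{M}_*}(A)\bigr)=\dim\ker d(\pi\circ\phi)_{\omega_0}$, but the inclusion $T_{\omega_0}\phi^{-1}(\mathcal{V})\subseteq\ker d(\pi\circ\phi)_{\omega_0}$ only yields the \emph{lower} bound $\ge 3(m+n)r-3r^2$, whereas the statement demands $\le$. Your claim that ``at a smooth point the analytic tangent space coincides with the kernel of the differential'' conflates the Zariski tangent space---defined via the full ideal $\mathbb{I}_{\mathbb{R}}\bigl(\phi^{-1}(\mathcal{V})\bigr)$---with the kernel of the Jacobian of the \emph{particular} generating map $\pi\circ\phi$. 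These coincide only when that map already has rank equal to the codimension $(m+n)r-r^2$, which is exactly what you are trying to establish, so the reasoning is circular. (Toy model: $F(x,y)=(x^2,xy)$ has smooth zero set $\{x=0\}$, yet $\ker dF_{(0,0)}=\mathbb{R}^2$ strictly contains its tangent line.) Invoking Lemma~\ref{q11} does not rescue this step: that lemma controls the span of gradients of \emph{all} polynomials in the ideal, not merely the $mn$ components of $\pi\circ\phi$ you have chosen. The paper sidesteps the issue entirely by a direct linear-algebra count: it writes $T_{\mathcal{M}_r}(A)$ in the explicit block form of Lemma~\ref{lem1} (with the four $\mathcal{T}_i$ blocks carrying the quaternionic structure), intersects with the linear subspace $\mathcal{M}_*=T_{\mathcal{M}_*}(A)$, and reads off the dimension bound from the resulting free parameters. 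That concrete computation---or any equivalent verification that $\dim\pi\bigl(T_{\mathcal{M}_r}(A)\bigr)\ge (m+n)r-r^2$---is the missing ingredient in your proposal.
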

\begin{proof}
Recall Lemma \ref{q11} and Theorem \ref{q3}, we only need to show
\begin{equation*}
\dim N_{\mathcal{V}_{sr}}(A)=4mn-(4(m+n)r-4r^2)=4(mn-mr-nr+r^2)
\end{equation*}
if and only if $\rank(A)=4r.$ It follows Lemma \ref{q11} that $\dim N_{\mathcal{V}_{sr}}(A)\leq 4(mn-mr-nr+r^2)$, then it is sufficient to show
this inequality is strict if $\rank(A)<4r$ and the reverse inequality holds when $\rank(A)=4r.$
In this proof, the particular identification of $\Gamma$ given in Theorem  \ref{q3} with $\mathbb{R}^{4mn}$ is important.
%We define $\omega:\mathbb{R}^{4mn}\rightarrow \Gamma$   as the special symmetric structure of set $\mathcal{V}_{sr}$.
%Suppose that $p$ is a polynomial on $\Gamma$ then we can write $\nabla p$ instead of the correct $\omega(\nabla(p\circ \omega))$.
Given a polynomial $p\in \mathbb{I}_{\mathcal{V}_{rs}}$ (where $\mathbb{I}_{\mathcal{V}_{rs}}$ is defined as the set of real polynomials that vanish on $\mathcal{V}_{rs}$) and two unitary matrices $U$ and $V$ with proper orders such that $q_{U,V}(X)=p(UXV^{T})$ is clearly also in $\mathbb{I}_{\mathcal{V}_{rs}}$.
Due to the particular choice of $\omega$, we have $\nabla_{q_{(U,V)}}(B)=U\nabla_{p}(UXV^{T})V^{T}$.
Let $A$ be fixed of $\rank (A)=j\leq 4r.$ Then there exist two unitary matrices $\hat{U}$ and $\hat{V}$ such that $\hat{U}A\hat{V}^{T}=S_{j}=\diag\{\Sigma,\Sigma,\Sigma,\Sigma\},$ where $\Sigma$  is a diagonal matrix whose diagonal are $\sigma_{i},i=1,..,j$ and 0 elsewhere.
It follows that $\nabla_{q_{(\hat{U},\hat{V})}}(A)=\hat{U}\nabla p(S_{j})\hat{V}^{T},$ which implies that $\dim N_{\mathcal{V}_{sr}}(A)=\dim N_{\mathcal{V}_{sr}}(S).$
Then all $\mathbb{R}_{4r+4,4r+4}$(in order to keep the structure of $\mathcal{V}_{sr}$) subdeterminants of $\Gamma$ form polynomials in $\mathbb{I}_{\mathcal{V}_{sr}}$. We can get
$\dim N_{\mathcal{V}_{sr}}(S)\geq 4(mn-mr-nr+r^2), $ then it prove that any matrix $X$ in the set $\Gamma$ with $\rank(X)=4r$ element of $\mathcal{V}_{sr}$ is non-singular. In other direction, if $j<r$, similarly as the above we can construct a variety with dimension of $4(mn-mj-nj+j^2)$ which is bigger than
$4(mn-mr-nr+r^2)$. Consider two fixed matrices $\widetilde{U}\in \mathbb{R}^{4m\times 4}$ and $\widetilde{V}\in \mathbb{R}^{4m\times 4}$ and define the map
$\theta_{\widetilde{U},\widetilde{V}}:\mathbb{R}^{4m\times 4}\rightarrow \mathcal{\widetilde{V}}_{sr}$ via $\theta_{\widetilde{U},\widetilde{V}}(x)=S+x\widetilde{U}\widetilde{V}^{T}.$
%Let $\{e_{l}\}_{1}^{n}$ be the standard basis of $ \mathbb{R}^{4n\times 4}$ and considering $\theta_{e_{l}}$ as well as all $\theta_{e_{l}+e_{l^{'}}}-\theta_{e_{l}}-\theta_{e_{l^{'}}}$.
Then
\begin{equation*}
\spn\left\{\frac{d}{dx}\theta_{\widetilde{U},\widetilde{V}}(0):\widetilde{U},\widetilde{V}\in \mathbb{R}^{4n\times 4}\right\}=\Gamma,
\end{equation*} which is saying that the dimension of the differential geometry tangent space is $4mn$. Then $\dim N_{\mathcal{V}_{sr}}=0,$ and $S_{j}$ is singular. It follows that $\mathcal{V}_{sr}^{ns}=\mathcal{M}_{r}$.

Next, we will prove \eqref{neweq1} is satisfied.
Choose a point $A=US V^{T}\in \mathcal{M}_{r} $, where $U\in \mathbb{R}^{4m\times 4r},$ $S=\diag{\{\Sigma,\Sigma,\Sigma,\Sigma\}}$ and $V\in \mathbb{R}^{4n\times 4r}.$
Denote
\begin{align*}
\mathcal{T}_{i}=\left(
    \begin{array}{cc}
       \mathbb{R}^{r\times r} &\mathbb{R}^{r\times (n-r)} \\
      \mathbb{R}^{(m-r)\times r} & 0  \\
    \end{array}
  \right), i=0,1,2,3.
\end{align*}
By Lemma \ref{lem1}, the tangent space of $\mathcal{M}_{r}$ at $A$ can be expressed as
\begin{align*}
\mathcal{T}_{\mathcal{M}_{r}}(A)=\left\{[U,U_{\bot}]\left(
                                          \begin{array}{cccc}
                                            \mathcal{T}_{0} & -\mathcal{T}_{1} & \mathcal{T}_{2} & -\mathcal{T}_{3} \\
                                            \mathcal{T}_{1} & \mathcal{T}_{0} & -\mathcal{T}_{3} & \mathcal{T}_{2} \\
                                            -\mathcal{T}_{2} & \mathcal{T}_{3} & \mathcal{T}_{0} & -\mathcal{T}_{1} \\
                                            \mathcal{T}_{3} & -\mathcal{T}_{2} & \mathcal{T}_{1} & \mathcal{T}_{0} \\
                                          \end{array}
                                        \right)[V,V_{\bot}]^{T}
                                        \right\}.
\end{align*}
Then, it is easy to prove $T_{\mathcal{M}_{*}}(A)=\spn (W)$, with
\begin{align*}
W=\left(
                      \begin{array}{cccc}
                        0 & -\widetilde{\mathbf{1}}^{m\times n} & \widetilde{\mathbf{1}}^{m\times n}& -\widetilde{\mathbf{1}}^{m\times n} \\
                        \widetilde{\mathbf{1}}^{m\times n} & 0 & -\widetilde{\mathbf{1}}^{m\times n} & \widetilde{\mathbf{1}}^{m\times n} \\
                        -\widetilde{\mathbf{1}}^{m\times n} & \widetilde{\mathbf{1}}^{m\times n} & 0 & -\widetilde{\mathbf{1}}^{m\times n} \\
                        \widetilde{\mathbf{1}}^{m\times n}& -\widetilde{\mathbf{1}}^{m\times n} & \widetilde{\mathbf{1}}^{m\times n} & 0 \\
                      \end{array}
                    \right)~\text{with}~\widetilde{\mathbf{1}}=\left(
                            \begin{array}{cccc}
                              1 & 1 & \cdots & 1 \\
                              1 & 1 & \cdots & 1 \\
                              \vdots & \vdots & \ddots & \vdots \\
                              1 & 1 & \cdots & 1 \\
                            \end{array}
                          \right).
\end{align*}
After that we can obtain \eqref{neweq1}.

\end{proof}

 It follows from Theorem \ref{q3} that $\mathcal{V}$ is an irreducible variety with dimension $m=3(m+n)r-3r^2,$  then by Lemma \ref{new1}, $\mathcal{V}^{ns}$ can be chosen as a manifold with dimension $m=3(m+n)r-3r^2.$  Morover, the set of nonsingular points of $\mathcal{V}_{sr}$ forms the manifold $\mathcal{M}_{r}.$  Hence,  by Lemma \ref{lm1}, every intersction point of $\mathcal{M}_{r}\cap \mathcal{M}_{*}\cap \mathcal{V}^{ns}$ is a non-trivial intersection point, i.e., the angle between $\mathcal{M}_{r}$ and $\mathcal{M}_{*}$ is well defined. Denote $\mathcal{V}^{ns,nt}\subset \mathcal{V}$ as the set of all points in $\mathcal{M}_{r}\cap \mathcal{M}_{*}\cap \mathcal{V}^{ns}$ that are nontangential with respect to the manifolds $\mathcal{M}_{r}$ and $\mathcal{M}_{*}.$ Then by \eqref{neweq1} and Lemma \ref{lm2},  we have $\mathcal{V}^{ns,nt}\neq \emptyset, $ which is saying that nontangentiality at one single intersection point implies nontangentiality at all the points of the manifold.

Based on the above results, we can get the main results of this section.
%Theorem \ref{th3}, Lemma \ref{lm1} and \ref{lm2}, we can get the following result immediately.
\begin{theorem}\label{th2}
The set $\mathcal{V}^{ns,nt}=\mathcal{M}_{r}\cap \mathcal{M}_{*}=\mathcal{M}_{r*}$ is an $3(m+n)r-3r^2$ dimensional  manifold.
Its complement $\mathcal{V}\setminus \mathcal{V}^{ns,nt}$ is a finite set of connected manifolds of lower dimension.
\end{theorem}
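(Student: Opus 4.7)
The plan is to combine Theorems \ref{q3} and \ref{th3} with Lemmas \ref{new1}, \ref{lm1}, \ref{lm2} and Whitney's Theorem \ref{whi}. The argument naturally splits into three steps: identify $\mathcal{V}^{ns}$, show that every point of it is actually non-tangential, and decompose the complement.

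First, I would apply Lemma \ref{new1}: because $\mathcal{V}$ is irreducible of dimension $m := 3(m+n)r - 3r^2$ by Theorem \ref{q3}, its non-singular locus $\mathcal{V}^{ns}$ can be taken as a smooth manifold of dimension $m$. Since $\mathcal{M}_*$ is an affine subspace (so $\mathcal{M}_*^{ns} = \mathcal{M}_*$) and $\mathcal{V}_{sr}^{ns} = \mathcal{M}_r$ by Theorem \ref{th3}, Lemma \ref{lm1} applies to the pair $(\mathcal{V}_{sr},\mathcal{M}_*)$ — we just need $\dim\mathcal{V} < \min(\dim \mathcal{V}_{sr}, \dim \mathcal{M}_*)$, which follows from $(m-r)(n-r) > 0$ together with the dimension counts in Theorems \ref{q3}. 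Hence every point of $\mathcal{M}_r \cap \mathcal{M}_* \cap \mathcal{V}^{ns}$ is a non-trivial intersection point, and the angle $\alpha(A)$ of Definition \ref{de1} is well defined there.

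Second, I would invoke Lemma \ref{lm2}. The tangent-space inequality \eqref{neweq1} established in Theorem \ref{th3} is precisely the sufficient hypothesis of that lemma, so $\mathcal{V}^{ns,nt} \neq \emptyset$. To promote this from "non-empty" to "equal to $\mathcal{M}_{r*}$", I would use the homogeneity of the set-up. The left/right action $X \mapsto \hat{\mathbf{U}} X \hat{\mathbf{V}}^T$ by real representations of unitary quaternion matrices is orthogonal on $\mathbb{R}^{4m\times 4n}$ and preserves both $\mathcal{V}_{sr}$ and $\mathcal{M}_*$ (the latter because $\mathbf{U}\mathbf{A}\mathbf{V}^*$ has zero real part whenever $\mathbf{A}$ does). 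By the QSVD of Theorem \ref{t:qsvd} this action, combined with positive rescaling of singular values, is transitive on $\mathcal{M}_{r*}$. Because orthogonal maps preserve both non-singular loci and the angle of Definition \ref{de1}, the properties "non-singular in $\mathcal{V}$" and "non-tangential" are constant on $\mathcal{M}_{r*}$; so $\mathcal{V}^{ns,nt} \neq \emptyset$ forces $\mathcal{V}^{ns,nt} = \mathcal{V}^{ns} = \mathcal{M}_{r*}$. The manifold structure and its dimension $m = 3(m+n)r - 3r^2$ are inherited from $\mathcal{V}^{ns}$ via Lemma \ref{new1}.

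Third, Lemma \ref{lm2} immediately yields that $\mathcal{V} \setminus \mathcal{V}^{ns,nt}$ is a real algebraic variety of dimension strictly less than $m$. Applying Whitney's Theorem \ref{whi} to this variety writes it as a finite union of connected smooth manifolds of strictly lower dimension, which is the second assertion.

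The main obstacle is the propagation step in the second paragraph: Lemma \ref{lm2} on its own only guarantees a single non-tangential point, and the identification $\mathcal{V}^{ns} = \mathcal{M}_{r*}$ is likewise not automatic. Closing this gap cleanly requires three invariance checks — (i) the orthogonal action by unitary-quaternion representations preserves $\mathcal{V}_{sr}$ and $\mathcal{M}_*$, (ii) together with positive scaling it acts transitively on $\mathcal{M}_{r*}$ via Theorem \ref{t:qsvd}, and (iii) the angle in Definition \ref{de1}, being built from tangent spaces and orthogonal complements, is invariant under orthogonal transformations. Once these three items are recorded, the theorem assembles directly from the lemmas already in place.
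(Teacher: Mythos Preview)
Your overall strategy tracks the paper's: assemble Theorems \ref{q3} and \ref{th3} with Lemmas \ref{new1}, \ref{lm1}, \ref{lm2}, and then invoke Whitney's Theorem \ref{whi} for the complement. You also correctly flag the real gap --- Lemma \ref{lm2} alone yields only $\mathcal{V}^{ns,nt}\neq\emptyset$, not the full equality $\mathcal{V}^{ns,nt}=\mathcal{M}_{r*}$ --- and you attempt to close it with a homogeneity argument. The problem is that your invariance check (i) is false: left/right multiplication by unitary quaternion matrices does \emph{not} preserve the pure-quaternion condition. Already in the $1\times 1$ case, take $\mathbf{A}=\mathbf{i}$ (pure), $\mathbf{U}=\mathbf{i}$ (unitary since $\mathbf{i}^{*}\mathbf{i}=1$), $\mathbf{V}=1$; then $\mathbf{U}\mathbf{A}\mathbf{V}^{*}=\mathbf{i}^{2}=-1$, which has nonzero real part. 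More generally, for a pure $\mathbf{q}$ one has $\re(\mathbf{p}\mathbf{q})=-(p_iq_i+p_jq_j+p_kq_k)$, which vanishes only for very special $\mathbf{p}$. Since the action does not preserve $\mathcal{M}_{*}$, it does not carry $\mathcal{M}_{r*}$ to itself, the transitivity claim (ii) becomes vacuous on $\mathcal{M}_{r*}$, and the propagation argument collapses.

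The paper itself does not use a group-action argument here. In the paragraph preceding Theorem \ref{th2} it records that the hypotheses of Lemmas \ref{lm1} and \ref{lm2} are met via Theorems \ref{q3} and \ref{th3}, and then states the theorem; the sentence ``nontangentiality at one single intersection point implies nontangentiality at all the points of the manifold'' is asserted rather than derived from an explicit symmetry. So your instinct to supply a mechanism for the propagation is sound, but the mechanism you chose does not work. A repair would need either a symmetry that genuinely preserves both $\mathcal{V}_{sr}$ and $\mathcal{M}_{*}$ (the unitary action preserves only the first), or a direct pointwise argument --- e.g.\ observing that the tangent-space estimate \eqref{neweq1} of Theorem \ref{th3} is in fact established at \emph{every} $A\in\mathcal{M}_{r*}$, so that the sufficient condition of Lemma \ref{lm2} is witnessed everywhere and no nontangential point can be missing from $\mathcal{M}_{r*}$.
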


\section{Alternating Projections on Manifolds}
\label{sec:main}

In this section,  the alternating projection method is chosen to solve the problem \eqref{pmain}.
%we focus on the problem \eqref{pmain} which is  difficult to solve by  the existing \textcolor{red}{\Large numerical linear algebra methods and convex optimization methods}.
%We propose to apply the alternating projections method to the problem \eqref{pmain}.
The basic idea of alternating projections is to find a point in the intersection of two sets by iteratively projecting a point into one set and then the other.
Here, one manifold is the fixed rank $r$ quaternion matrix set $\mathcal{M}_{r}$  given as \eqref{v1},
% \begin{equation}\label{set01}
% \mathcal{M}_{qr}:=\{{\bf X}\in \mathbb{H}^{m\times n},\rank({\bf X})= r\}
% \end{equation}
 and the other one is the pure quaternion matrix set $\mathcal{M}_{*}$ given as \eqref{v2}.
%\begin{equation}\label{set02}
%\mathcal{M}_{q0}:=\{{\bf X}\in \mathbb{H}^{m\times n},\re({\bf X})=0\}.
%\end{equation}
It follows that \eqref{pmain} can be rewritten as finding the nearest matrix in the set $\mathcal{M}_{r*}$ given as \eqref{v3}, i.e., the intersection of the above two manifolds.
%\begin{equation}\label{set2}
%\mathcal{M}_{qr0}:=\mathcal{M}_{qr}\cap \mathcal{M}_{q0}=\{{\bf X}\in \mathbb{H}^{m\times n}, \rank({\bf X})= r, \re({\bf X})=0\}.
%\end{equation}

We first introduce two projections that project the given matrix onto the two matrix sets, respectively.
Similar to the real and complex matrix cases, the Eckart-Young-Mirsky low-rank approximation theorem \cite{golub2012matrix} still hold for quaternion matrices.
With the singular value decomposition of quaternion matrix given in Theorem \ref{t:qsvd}, the projection onto fixed rank matrix set $\mathcal{M}_{r}$ can be expressed as
\begin{align}\label{p1}
\pi_{1}({\bf X})=\sum_{i=1}^{r}\sigma_{i}({\bf X}){\bf u}_{i}({\bf X}){\bf v}_{i}^{T}({\bf X}),
\end{align}
where $\sigma_{i}({\bf X})$ are the $r$ first singular values of ${\bf X}$, and ${\bf u}_{i}({\bf X}),{\bf v}_{i}({\bf X})$ are the first $r$ columns of the unitary matrices of ${\bf U}$ and ${\bf V}$ given in Theorem \ref{t:qsvd}, respectively.
In addition, for an arbitrary quaternion matrix ${\bf X}=X_{0}+X_{1}{\bf i}+X_{2}{\bf j}+X_{3}{\bf k}\in \mathbb{H}^{m\times n}$, the projection onto the affine manifold $\mathcal{M}_{*}$ is exactly removing the real part, i.e.,
\begin{align}\label{p2}
\pi_{2}({\bf X})=\pi_{2}(X_{0}+X_{1}{\bf i}+X_{2}{\bf j}+X_{3}{\bf k})=X_{1}{\bf i}+X_{2}{\bf j}+X_{3}{\bf k}.
\end{align}
Recall that $\mathcal{M}_{r}$ and $\mathcal{M}_{*}$ are fix rank manifold and affine manifold introduced in Section \ref{sec:pre}.
Then the projection mappings may not be single valued.
We write $\pi_{1}({\bf X})$ and $\pi_{2}({\bf X})$ to denote an arbitrarily closest point to ${\bf X}$ on the manifolds  $\mathcal{M}_{r}$ and $\mathcal{M}_{*}$, respectively.

Nevertheless,  the projection onto the intersection  $\mathcal{M}_{r*}$ cannot be computed efficiently. We use $\pi ({\bf X})$ to denote an arbitrarily closest point to ${\bf X}$ on  the intersection  $\mathcal{M}_{r*}$. Furthermore, the convergence of the alternating projections cannot be guaranteed in general even when the two non-convex sets have a nonempty intersection, which is different from the convex case.
For instance, suppose $\textrm{K}=\mathbb{R}^{2}$ and denote $\mathcal{M}_{1}=\{t, (t+1)(3-t)/4:t\in \mathbb{R}\}$ and $\mathcal{M}_{2}=\mathbb{R}\times \{0\}.$ It easy to see that $\pi_{1}((1,0))=(1,1)$ and $\pi_{2}((1,1))=(1,0),$ then the sequence of alternating projections does not convergence.
Therefore, it is more difficult to consider alternating projections on non-linear manifolds than convex sets.

The following algorithm describes the alternating projections method.
of problem \eqref{pmain}.
\begin{algorithm}[!h]\label{ab1}
\caption{Alternating projections on manifolds} \label{alg1}
\textbf{Input: } Given a quaternion matrix ${\bf A}\in \mathbb{H}^{m\times n}$ this algorithm computes optimal rank-$r$ pure quaternion matrix approximation. \\
%~~1: \textcolor{red}{\Large $\triangle S_{0}=0, Y_{0}={\bf A}$};\\
~~1: Initialize ${\bf X}_0 = {\bf A}$; \\
~~2: \textbf{for} $k=1,2,...$\\
~~3: \quad ${\bf Y}_{k+1}=\pi_{1}({\bf X}_{k});$\\
~~4: \quad ${\bf X}_{k+1}=\pi_{2}({\bf Y}_{k+1});$\\
~~5: \textbf{end}\\
\textbf{Output:} ${\bf X}_k$ when the stopping criterion is satisfied.
\end{algorithm}

Combining the above results with Theorem \ref{th2} and Theorem 5.1 in \cite{andersson2013alternating}, we can obtain the main result of this paper.
%conditions in Theorem \ref{th1} are satisfied, then we can show our
\begin{theorem}\label{thm_convergence}
 Let $\mathcal{M}_{r}$, $ \mathcal{M}_{*}$ and $\mathcal{M}_{r*}=\mathcal{M}_{r}\cap \mathcal{M}_{*}$ be given as \eqref{v1}- \eqref{v3}, respectively.
%Then $\mathcal{M}_{qr0}=\mathcal{M}_{q0}\cap \mathcal{M}_{qr}$ given as \eqref{set2} is also a manifold.
The projections onto the manifolds $\mathcal{M}_{r}$ and $\mathcal{M}_{*}$ are given in \eqref{p1} and \eqref{p2},  denote $\pi$ as the projection onto the manifold $\mathcal{M}_{r*}.$
Suppose that ${\bf B}\in \mathcal{M}_{r*}$
is a non-tangential intersection point of $\mathcal{M}_{r}$ and $\mathcal{M}_{*}$, then for any  given $\epsilon>0$ and $1>c>\sigma({\bf B})$, there exist an
$r>0$ such that for any ${\bf A}\in {\cal B}({\bf B},r)$
%the sequence of alternating projections
%\begin{align*}
%{\bf A}_{1}=\pi_{1}({\bf A}),~ {\bf A}_{2}=\pi_{2}({\bf A}_{1}),~{\bf A}_{3}=\pi_{1}({\bf A}_{2}),~{\bf A}_{4}=\pi_{2}({\bf A}_{3}),...
%\end{align*} satisfy the following results:\\
%(1) the sequence converges to a point ${\bf A}_{\infty}\in \mathcal{M}$,\\
%(2) $\|{\bf A}_{\infty}-\pi({\bf A})\|\leq \epsilon\|{\bf A}-\pi({\bf A})\|$,\\
%(3) $\|{\bf A}_{\infty}-{\bf A}_{k}\|\leq const\cdot c^{k}\|{\bf A}-\pi({\bf A})\|.$
% Denote ${\cal B}$ as the set of all non-tangential intersection point of ${\cal M}_{q0}$ and ${\cal M}_{qr}$. Given $\epsilon>0$ and $c$ with
%  $$
%  \sup_{B\in{\cal B}} \sigma(B) < c < 1,
%  $$
%  there exists a positive scalar function $r_{\epsilon,c}$ on ${\cal B}$ such that for any
%  $$
%  A \in {\cal N}_{\epsilon,c} := \bigcup_{B\in{\cal B}} {\bf B}\big( B,r_{\epsilon,c}(B) \big)
%  $$
  the sequence $\{{\bf X}_k\}_{k=0}^\infty$ generated by the alternating projections algorithm initializing from ${\bf A}$ satisfies the following results:
  \begin{enumerate}[(1)]
    \item the sequence converges to a point ${\bf X}_\infty \in {\cal M}_r \cap {\cal M}_*$,
    \item $\| {\bf X}_\infty - \pi({\bf A}) \| \leq \epsilon \| {\bf A} - \pi({\bf A}) \|$,
    \item $\| {\bf X}_\infty - {\bf X}_k \| \leq {\rm const} \cdot c^k \| {\bf A} - \pi({\bf A}) \|$.
  \end{enumerate}
\end{theorem}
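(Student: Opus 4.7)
The plan is to reduce the theorem to Theorem~5.1 of \cite{andersson2013alternating}, which delivers exactly the three conclusions (1)--(3) for alternating projections between two embedded manifolds at a non-tangential intersection point. That abstract theorem requires three structural ingredients at the target point $\mathbf{B}$: (i) both sets projected onto are embedded submanifolds of the ambient Euclidean space in a neighborhood of $\mathbf{B}$; (ii) the intersection $\mathcal{M}_{r*}$ is itself a manifold near $\mathbf{B}$, so that the projection $\pi$ is locally single-valued and the cosine $\sigma(\mathbf{B})$ of Definition~\ref{de1} is meaningful; and (iii) $\mathbf{B}$ is non-tangential, so that $\sigma(\mathbf{B})<1$ and one may choose the rate constant $c$ with $\sigma(\mathbf{B})<c<1$.

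The first step of the proof is simply to harvest these ingredients from Section~\ref{sec:pre}. The real representation $\phi$ in \eqref{realq} embeds the quaternion setting isometrically into $\mathbb{R}^{4m\times 4n}$; under $\phi$, $\mathcal{M}_r$ is an embedded submanifold by Lemma~\ref{qm1}, while $\mathcal{M}_*$ is an affine subspace by construction and hence trivially a manifold. Theorem~\ref{th2} then asserts that the intersection $\mathcal{M}_{r*}=\mathcal{V}^{ns,nt}$ is a smooth manifold of dimension $3(m+n)r-3r^{2}$, which both makes $\pi$ well defined on a neighborhood of $\mathbf{B}$ and guarantees that the angle $\alpha(\mathbf{B})$ is defined. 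Ingredient~(iii) is a hypothesis of the theorem.

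The second step is to verify that the iterations in Algorithm~\ref{alg1} are precisely the metric projections demanded by the abstract theorem. The mapping $\pi_2$ in \eqref{p2} is the orthogonal projection onto the linear subspace $\mathcal{M}_*$ and is globally single-valued. The mapping $\pi_1$ in \eqref{p1}, delivered by truncated QSVD, is single-valued on a neighborhood of $\mathbf{B}$ because the spectral gap $\sigma_r(\mathbf{B})-\sigma_{r+1}(\mathbf{B})=\sigma_r(\mathbf{B})>0$ at $\mathbf{B}\in\mathcal{M}_r$ persists under small perturbations. Therefore, for $\mathbf{A}$ in a sufficiently small ball $\mathcal{B}(\mathbf{B},r)$, the iterates of Algorithm~\ref{alg1} coincide with alternating metric projections onto $\mathcal{M}_r$ and $\mathcal{M}_*$, and Theorem~5.1 of \cite{andersson2013alternating} applies to yield (1)--(3) verbatim.

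The main obstacle is bookkeeping, not substance: one must confirm that the real-representation isometry $\phi$ intertwines $\pi_1$ and $\pi_2$ with the corresponding real-matrix projections, so that the Frobenius distance, the angle at $\mathbf{B}$, and the distance to the intersection all transfer unchanged from $\mathbb{H}^{m\times n}$ to the real ambient space used by the cited theorem. This is immediate because $\phi$ preserves the Frobenius inner product, the QSVD respects the real representation in the block form of \eqref{realq}, and excision of the real part commutes with $\phi$. Hence the whole weight of the argument rests on Theorem~\ref{th2}, which has already been proved, and the present theorem is a direct corollary of the abstract convergence result.
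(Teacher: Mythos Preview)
Your proposal is correct and follows exactly the paper's own approach: the paper simply states that the theorem is obtained by ``combining the above results with Theorem~\ref{th2} and Theorem~5.1 in \cite{andersson2013alternating}'' and gives no further argument. Your write-up supplies the verification of hypotheses (manifold structure from Lemma~\ref{qm1} and Theorem~\ref{th2}, identification of $\pi_1,\pi_2$ with metric projections, and the isometry bookkeeping under $\phi$) that the paper leaves implicit, so it is in fact more detailed than the original.
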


\section{Initialization for Alternating Projections Algorithm} \label{sec:initia}
%In real application, it is not easy to check whether there exist a point $A$ and a radius $r$ such that $B\in B(A,r).$ So the results in Theorem is local convergence. When $B$ is far away the intersection of $M_{1}\cap M_{2}$ the convergence is very slow. In order to accelerate the speed of the algorithm, we need to choose a good initialization. For a given matrix $B$ we need to find a unique matrix $B_{0}$ such that $B_{0}\in B(A,r)$ so we construct the following optimal model as
%\begin{align}\label{xin1}
%&\min ~\|X-B\|_{F}^2+\tau \|X\|_{*}\nonumber\\
%&~~ s.t. ~Re(X)=0
%\end{align}
%Note that the solution of \eqref{xin1} $\widetilde{B}$  which is unique and satisfies:
Theoretically, Theorem \ref{thm_convergence} implies that the alternating projections algorithm linearly converges to a good approximation to $\pi({\bf A})$ assuming that ${\bf A}$ is in some neighborhood ${\cal N}_{\epsilon,c}$ of the intersection manifold.
Nevertheless, it is hard to check whether the original matrix is inside such a neighborhood or not since there is no explicit formula for the radius function $r_{\epsilon,c}$ in terms of the given scalars $\epsilon$ and $c$.
Therefore, it is necessary to design an initialization strategy to make the alternating projections method more practical and reliable.

Recall the target projection is as follows
\begin{equation}\label{eq_proj}
  \begin{array}{rl}
    \pi({\bf A}) \in {\rm arg}\min & \|{\bf X}-{\bf A}\|_\textrm{F}^2, \\
    {\rm s.t.} & {\rm Re}({\bf X}) = 0, \\
                & {\rm rank}({\bf X}) = r.
  \end{array}
\end{equation}
We aim at an initial point $X^0$ which is close to the intersection of the two manifolds and $\pi(X^0) \approx \pi(A)$.
If we apply a convergent iterative methods to the optimization problem \eqref{eq_proj}, then it is reasonable to regard an iterate after several steps as such a good initial point.
The reasons why we do not use this convergent iterative methods are (i) the convergence of this guaranteed method could be pretty slow and (ii) the computational cost of the alternating projections is generally much cheaper.

We reformulate the projection \eqref{eq_proj} to an unconstrained problem
\begin{equation}\label{eq_ind1}
  \min {\textstyle\frac{1}{2}}\|{\bf X}-{\bf A}\|_\textrm{F}^2 + \delta_{{\cal S}_0}({\bf X})
  + {\textstyle\frac{1}{2}}\|{\bf X}-{\bf A}\|_\textrm{F}^2 + \delta_{{\cal S}_r}({\bf X}),
\end{equation}
where $\delta_C$ denotes the indicator function of the set $C$.
To adapt the convergence conditions which will be discussed shortly, we further relax the problem \eqref{eq_ind1} to
\begin{equation}\label{eq_ind2}
  \min \underbrace{{\textstyle\frac{1}{2}}\|{\bf X}-{\bf A}\|_\textrm{F}^2 + {\textstyle\frac{\tau}{2}}\| {\rm Re}({\bf X}) \|_\textrm{F}^2}_{f({\bf X})}
  + \underbrace{{\textstyle\frac{1}{2}}\|{\bf X}-{\bf A}\|_\textrm{F}^2 + \delta_{{\cal S}_r}({\bf X})}_{g({\bf X})}.
\end{equation}
Note that the problems \eqref{eq_ind1} and \eqref{eq_ind2} are equivalent when $\tau$ approaches the infinity.

Li and Pong \cite{LP16} proposed and investigate the Douglas-Rachford splitting method (DRSM) for solving the nonconvex optimization problem
\begin{equation*}
  \min f(x) + g(x).
\end{equation*}
The DRSM iterates
\begin{equation*}
  \left\{
  \begin{array}{l}
    y^{k+1} \in {\rm prox}_{\alpha f}(x^k) := {\rm arg}\min\limits_y \big\{ f(y) + \frac{1}{2\alpha}\|y-x^k\|_2^2 \big\}, \\
    z^{k+1} \in {\rm prox}_{\alpha g}(2y^{k+1}-x^k) := {\rm arg}\min\limits_z \big\{ g(z) + \frac{1}{2\alpha}\|2y^{k+1}-x^k-z\|_2^2 \big\}, \\
    x^{k+1} = x^k + (z^{k+1} - y^{k+1}),
  \end{array}
  \right.
\end{equation*}
Assuming the existence of a cluster point, they proved the following conditions can guarantee the global convergence to a stationary point:
\begin{enumerate}[(1)]
  \item $f$ has a Lipschitz continuous gradient whose Lipschitz continuity modulus is bounded by $L$,
  \item $g$ is a proper closed function,
  \item $f$ and $g$ are semi-algebraic functions,
  \item $0 < \alpha < \frac{1}{L}$.
\end{enumerate}

In the problem \eqref{eq_ind2}, the function $f({\bf X})$ is quadratic and thus it is semi-algebraic and has Lipschitz continuous gradient whose Lipschitz constant is $1+\tau$.
The manifold ${\cal S}_r$ can be characterized by $$\{ {\bf Y}:\, \det({\bf Y}_{r+1})=0 \text{ for any $(r+1)$-by-$(r+1)$ submatrix } {\bf Y}_{r+1} \}. $$
Hence, this set is a semi-algebraic set, which implies that its indicator function is a semi-algebraic function.
That is, $g({\bf X})$ is also a semi-algebraic function.
Therefore, the global convergence to a stationary point can be guaranteed as long as we choose the stepsize $\alpha$ less than $\frac{1}{1+\tau}$.

Furthermore, the proximal operator for $\alpha f$ is implemented by
\begin{equation*}
\begin{split}
  {\rm prox}_{\alpha f}({\bf Y}) &=
  (\textstyle\frac{\alpha}{1+\alpha+\alpha\tau}A_1 + \frac{1}{1+\alpha+\alpha\tau}Y_1) +
  (\textstyle\frac{\alpha}{1+\alpha}A_2 + \frac{1}{1+\alpha}Y_2) {\bf i} \\&\quad +
  (\textstyle\frac{\alpha}{1+\alpha}A_3 + \frac{1}{1+\alpha}Y_3) {\bf j} +
  (\textstyle\frac{\alpha}{1+\alpha}A_4 + \frac{1}{1+\alpha}Y_4) {\bf k},
\end{split}
\end{equation*}
and the proximal operator for $\alpha g$ is
\begin{equation*}
  {\rm prox}_{\alpha g}({\bf Y}) = \pi_1 \big( \textstyle\frac{\alpha}{1+\alpha}{\bf A} + \frac{1}{1+\alpha}{\bf Y} \big),
\end{equation*}
i.e., the truncation to a rank-$r$ quaternion matrix.
To sum up, the DRSM for solving \eqref{eq_ind2} iterates
\begin{equation}\label{eq_drsm_initial}
  \left\{
  \begin{array}{l}
    {\bf Y}^{k+1} = (\textstyle\frac{\alpha}{1+\alpha+\alpha\tau}A_1 + \frac{1}{1+\alpha+\alpha\tau}X^k_1) +
  (\textstyle\frac{\alpha}{1+\alpha}A_2 + \frac{1}{1+\alpha}X^k_2) {\bf i} \\
  \hspace{40pt} +
  (\textstyle\frac{\alpha}{1+\alpha}A_3 + \frac{1}{1+\alpha}X^k_3) {\bf j} +
  (\textstyle\frac{\alpha}{1+\alpha}A_4 + \frac{1}{1+\alpha}X^k_4) {\bf k}, \\
    {\bf Z}^{k+1} \in \pi_1 \big( \textstyle\frac{\alpha}{1+\alpha}{\bf A} + \frac{2}{1+\alpha}{\bf Y}^{k+1}-\frac{1}{1+\alpha}{\bf X}^k \big), \\
    {\bf X}^{k+1} = {\bf X}^k + ({\bf Z}^{k+1} - {\bf Y}^{k+1}).
  \end{array}
  \right.
\end{equation}
We perform the DRSM for a fixed number steps and then apply the generated iterate $Y^k$ as the initial point of the alternating projections method.
Note that this initialization strategy is still heuristic and the improvement using this method will be numerically illustrated in the following section.

%%%%%%%%%%%%%%%%%%%%%%%%%%%%%%%%%%%%%%%%%%%%

\section{Numerical Experiments}\label{sec:exper}

In this paper, we focus on searching an optimal rank-$r$ pure quaternion matrix approximation of a given quaternion matrix.
%This problem  is  difficult to solve by  the existing \textcolor{red}{\Large  methods}.
Although this problem  is  difficult,  there exist some suboptimal methods to solve it.
For example,  in \cite{jia2013new}, the authors  do the rank $r$ truncation of a given quaternion matrix and then take three imaginary parts as an approximation of the given quaternion matrix. This method is called   ``QsvdTr''  in the sequel.
However, if the real part of the quaternion matrix is removed, the rank of the quaternion matrix generally changes.  This fact can be guaranteed by the following proposition (The proof can be found in the Appendix).

\begin{proposition}\label{rankincrease}
 For an arbitrary quaternion ${\bf A}=A_{0}+A_{1}{\bf i}+A_{2}{\bf j}+A_{3}{\bf k}\in 
\mathbb{H}^{m\times n}$, with $\rank({\bf A})=r \leq \min\{m,n\}/4$, we denote the 
pure quaternion part of ${\bf A}$ as ${\bf A}_{p}=A_{1}{\bf i}+A_{2}{\bf j}+A_{3}{\bf k},$ then
$$r\leq \rank({\bf A}_{p})\leq 4r.$$
\end{proposition}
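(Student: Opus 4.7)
The plan is to handle the two inequalities separately. The upper bound is the substantive part and relies on an explicit rank-$1$ factorization derived from the QSVD (Theorem \ref{t:qsvd}); the lower bound follows from rank subadditivity.

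For $\rank({\bf A}_p) \leq 4r$, my first step is to reduce to the rank-$1$ case. Writing ${\bf A} = \sum_{k=1}^{r} \sigma_k {\bf u}_k {\bf v}_k^{*}$ by the QSVD, one has ${\bf A}_p = \sum_k \sigma_k ({\bf u}_k {\bf v}_k^{*})_p$, and by rank subadditivity it suffices to bound the rank of $({\bf u}{\bf v}^{*})_p$ for a single outer product. Decomposing ${\bf u} = u_0 + u_1 {\bf i} + u_2 {\bf j} + u_3 {\bf k}$ with each $u_s \in \mathbb{R}^{m}$ and ${\bf v}$ analogously, I would expand the Hamilton product and collect the coefficients of ${\bf i}, {\bf j}, {\bf k}$ to obtain an explicit factorization
\[
({\bf u}{\bf v}^{*})_p = [\,u_0,\ u_1,\ u_2,\ u_3\,]\, M_{{\bf v}},
\]
where the left factor is a real $m \times 4$ matrix and $M_{{\bf v}} \in \mathbb{H}^{4\times n}$ is a pure-quaternion matrix whose rows are certain pure-quaternion combinations of the components of ${\bf v}$. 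This bounds $\rank(({\bf u}{\bf v}^{*})_p) \leq 4$, and summing the $r$ rank-$1$ contributions via subadditivity yields $\rank({\bf A}_p) \leq 4r$. The hypothesis $r \leq \min\{m,n\}/4$ is only there so that the bound $4r$ does not exceed the matrix dimensions.

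For $\rank({\bf A}_p) \geq r$, write ${\bf A} = {\bf A}_p + A_0$ and apply rank subadditivity to get
\[
r = \rank({\bf A}) \leq \rank({\bf A}_p) + \rank(A_0).
\]
An equivalent viewpoint uses the real representation of Section \ref{sec:pre}: from $\widehat{\bf A} = \widehat{{\bf A}_p} + \diag(A_0, A_0, A_0, A_0)$ with $\rank(\widehat{\bf A}) = 4r$ and $\rank(\widehat{{\bf A}_p}) = 4\rank({\bf A}_p)$, Sylvester's rank inequality recovers the same bound. In the regime of interest for this paper (pure-quaternion ${\bf A}$, so $A_0 = 0$), the lower bound reduces to the identity $\rank({\bf A}_p) = \rank({\bf A}) = r$.

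The main obstacle will be the rank-$1$ factorization for the upper bound: the noncommutativity of quaternion multiplication mixes the real components of ${\bf u}$ and ${\bf v}$ in a non-trivial way among $A_1, A_2, A_3$, and the naive estimate $\rank({\bf A}_p) \leq \rank({\bf A}) + \rank(A_0) \leq 5r$ falls short of the target. The tighter $4r$ bound emerges only after regrouping terms so that the four real components $u_0, u_1, u_2, u_3$ appear as the columns of a common real factor, which collapses the ostensibly five rank-$1$ summands of $({\bf u}{\bf v}^{*})_p$ into four. Once this per-summand calculation is in hand, both bounds follow as indicated.
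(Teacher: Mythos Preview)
Your upper-bound argument is correct but proceeds differently from the paper. The paper introduces three involutions on $\mathbb{H}$, for instance $(\,\cdot\,)^{*{\bf ij}}: q_0+q_1{\bf i}+q_2{\bf j}+q_3{\bf k}\mapsto q_0-q_1{\bf i}-q_2{\bf j}+q_3{\bf k}$ (and analogously $(\,\cdot\,)^{*{\bf ik}}$, $(\,\cdot\,)^{*{\bf jk}}$), which are $\mathbb{R}$-algebra automorphisms and hence satisfy $({\bf B}{\bf C})^{*{\bf ij}}={\bf B}^{*{\bf ij}}{\bf C}^{*{\bf ij}}$ and preserve rank. From the identity ${\bf A}_p=\tfrac14(3{\bf A}-{\bf A}^{*{\bf ij}}-{\bf A}^{*{\bf ik}}-{\bf A}^{*{\bf jk}})$ and a full-rank factorization ${\bf A}={\bf U}{\bf V}$, one gets ${\bf A}_p=\tfrac14[\,3{\bf U},\,{\bf U}^{*{\bf ij}},\,{\bf U}^{*{\bf ik}},\,{\bf U}^{*{\bf jk}}\,]\cdot[\,{\bf V};-{\bf V}^{*{\bf ij}};-{\bf V}^{*{\bf ik}};-{\bf V}^{*{\bf jk}}\,]$, an $(m\times 4r)\cdot(4r\times n)$ product, whence $\rank({\bf A}_p)\le 4r$. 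Your route---expanding a single rank-one term $({\bf u}{\bf v}^{*})_p$ into real components and exhibiting the real $m\times 4$ factor $[u_0,u_1,u_2,u_3]$---reaches the same factorization size by direct computation. The paper's device is more conceptual (no Hamilton-product bookkeeping once the involutions are known to be multiplicative), while yours is self-contained and avoids introducing auxiliary maps; both collapse the naive ``five rank-one summands'' to four in the same structural way.

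On the lower bound you are, if anything, more careful than the paper. Your subadditivity step yields only $\rank({\bf A}_p)\ge r-\rank(A_0)$, and you flag this. The paper's proof supplies no separate argument for $r\le\rank({\bf A}_p)$ beyond the final assertion, and indeed the inequality fails for general quaternion matrices (take ${\bf A}$ real and nonzero). So you are not missing an idea present in the paper; the stated lower bound should be read as holding in the intended regime rather than for arbitrary ${\bf A}$.
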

In Subsection \ref{ssec1}-\ref{ssec3}, the performances of  the ``QsvdTr'' algorithm given in \cite{jia2013new} and  the ``AltProj'' algorithm  proposed in Algorithm \ref{pmain} are compared by testing  synthetic data, random data and color images, respectively.  We use the running time  and the objective function values, i.e., $\|{\bf X}^k-{\bf A}\|_F$ to compare the results derived by ``QsvdTr'' and ``AltProj''.
All the experiments are performed under Windows 10 and MATLAB R2018a running on a desktop (Intel Core i7, @ 3.40GHz, 8.00G RAM).

\subsection{Synthetic Data}\label{ssec1}

In our first example, we compare the ``AltProj''  algorithm and   ``QsvdTr''  algorithm by finding the optimal rank 4 pure quaternion 
approximation of the pure quaternion matrix
{\scriptsize
\begin{align*}
{\bf A}  &  =  \left(
    \begin{array}{ccccc}
      0.37 & -0.79 & 0.04 & -0.73 & -0.06 \\
      -1.42 & -0.10 & 1.01 & 1.59 & -1.59 \\
      -0.34 & 0.38 & 1.30 & -0.66 & 1.08 \\
      -1.98 & 0.83 & 0.22 & -0.77 & 0.70 \\
      -0.38 & -0.14 & 0.86 & 0.54 & 1.65 \\
    \end{array}
  \right) {\bf i} +\left(
             \begin{array}{ccccc}
               0.29 & -0.38 & -0.13 & -1.77 & 0.20 \\
               0.70 & -0.69 & 0.83 & -0.16 & -0.52 \\
               -1.15 & 1.00 & -1.97 & 0.63 & 1.57 \\
               1.86 & -1.14 & 0.12 & -1.27 & 0.77 \\
               2.37 & 0.15 & 0.26 & -0.30 & -0.59 \\
             \end{array}
           \right) {\bf j}  \nonumber \\
      &   +   \left(
                      \begin{array}{ccccc}
                        0.33 & 0.74 & -1.40 & -0.77 & 0.86 \\
                        1.13 & -1.32 & 0.36 & -0.02 & 0.50 \\
                        0.25 & -0.68 & 0.36 & -0.71 & 0.77 \\
                        0.56 & -0.35 & 0.92 & 0.87 & -0.58 \\
                        0.64 & -1.59 & 0.37 & -1.51 & 0.19 \\
                      \end{array}
                    \right) {\bf k}.
\end{align*}}
By applying Algorithm \ref{alg1} on ${\bf A}$, we can get a  rank 4  pure quaternion approximation as
{\scriptsize
\begin{align*}
{\bf A}_{4}&=\left(
    \begin{array}{ccccc}
      0.50 & -0.73 & -0.01& -0.68 & 0.08 \\
      -1.34 & -0.11 & 1.05 & 1.69 & -1.50 \\
      -0.29 & 0.36 & 1.24 & -0.64 & 1.11 \\
      -2.01 & 0.81 & 0.11 & -0.83 & 0.56 \\
      -0.37 & -1.12 & 0.94 & 0.57 & 1.70 \\
    \end{array}
  \right){\bf i}+\left(
             \begin{array}{ccccc}
               0.23 & -0.50 & 0.08 & -1.71 & 0.37 \\
               0.65 & -0.95 & 0.76 & -0.17 & -0.47 \\
               -1.17 & 0.91 & -1.96 & 0.62 & 1.63 \\
               1.92 & -1.01 & 0.05 & -1.30 & 0.74 \\
               2.35 & 0.09 & 0.28 & -0.28 & -0.58 \\
             \end{array}
           \right){\bf j}\\
   & +\left(
              \begin{array}{ccccc}
                0.39 & 0.65 & -1.41 & -0.66 & 0.92 \\
                1.07 & -1.34 & 0.34 & -0.04 & 0.36 \\
                0.22 & -0.76 & 0.37 & -0.70 & 0.77 \\
                0.51 & -0.44 & 0.92 & 0.82 & -0.54 \\
                0.67 & -1.53 & 0.35 & -1.50 & 0.16 \\
              \end{array}
            \right){\bf k}.
\end{align*}
}
The singular value decomposition of ${\bf A}_{4}$ can be expressed as
${\bf A}_{4}={\bf U}S{\bf V}$ with
{\scriptsize
\begin{align*}
{\bf U}&=\left(
        \begin{array}{ccccc}
          -0.02 & -0.11 & 0.02 & 0.17 & -0.43 \\
          -0.10 & -0.24 & -0.04 & -0.19 &0.07 \\
          0.21 & 0.14& -0.34 & 0.17 & 0.02 \\
          0.06 & 0.24 & 0.16 & 0.30 & 0.36 \\
          0.04 & -0.01 & 0.10 & 0.04 & -0.20 \\
        \end{array}
      \right)+\left(
                \begin{array}{ccccc}
                  -0.27 & -0.14 & 0.30 & 0.25 & 0.25 \\
                  0.35 & 0.29 & 0.14 & -0.37 & 0.28 \\
                  -0.05 & -0.50 & -0.16 & -0.05 & 0.18 \\
                  0.33 & -0.30 & 0.18 & -0.05 & -0.11 \\
                  0.08 & -0.05 & 0.12 & 0.34 & -0.03 \\
                \end{array}
              \right){\bf i}\\
             & +\left(
                 \begin{array}{ccccc}
                   -0.15 & -0.05 & 0.13 & -0.10 & 0.16 \\
                   -0.19 & -0.12 & -0.10 & -0.14 & 0.46 \\
                   0.22 & -0.04 & 0.11 & -0.22 & 0.17 \\
                   -0.40 & 0.02 & -0.32 & -0.25& -0.08 \\
                   -0.32 & 0.53 & -0.22 & 0.06 & 0.05 \\
                 \end{array}
               \right){\bf j}+\left(
                          \begin{array}{ccccc}
                            -0.05 & -0.32 & -0.34 & 0.18 & 0.37 \\
                            -0.31 & -0.07 & 0.08 & 0.17& -0.17 \\
                            -0.23 & 0.01 & 0.49 & 0.21 & 0.07 \\
                            0.00 & 0.03 & -0.23 & 0.23 & 0.07 \\
                            -0.33 & 0.11 & 0.24 & -0.44 & -0.09 \\
                          \end{array}
                        \right){\bf k},
                        \end{align*}

                      \begin{align*}
            {\bf V} &=\left(
                      \begin{array}{ccccc}
                        0.65 & 0.31 & -0.54 & 0.30 & 0.32 \\
                        0.39 & -0.08 & -0.24 & 0.18 & 0.31 \\
                        -0.09 & 0.05 & 0.10 & 0.18 & 0.31 \\
                        0.39 & 0.28 & -0.10 & 0.24 & 0.13 \\
                        -0.06 & -0.43 & 0.12 & 0.41 & 0.10 \\
                      \end{array}
                    \right)+
                    \left(
                      \begin{array}{ccccc}
                        0 & 0 & 0 & 0 & 0 \\
                        -0.12 & 0.12 & 0.06 & -0.48 & 0.18 \\
                        0.03 & -0.06 & -0.07 & -0.16 & 0.38 \\
                        0.05 & 0.41 & 0.42 & 0.27 & 0.16 \\
                        -0.11 & -0.14 & 0.15 & 0.11 & 0.04 \\
                      \end{array}
                    \right){\bf i}\\
              & +     \left(
                      \begin{array}{ccccc}
                        0 & 0 & 0 & 0 & 0 \\
                        -0.19 & -0.07 & 0.04 & 0.05 & -0.29 \\
                        0.38 & 0.07 & -0.59 &0.10&-0.28 \\
                        0.14 & 0.04 & -0.02 & 0.18 & 0.03 \\
                        -0.06 & -0.19 & -0.08 & 0.06 & -0.13 \\
                      \end{array}
                    \right){\bf j}+\left(
                               \begin{array}{ccccc}
                                 0 & 0 & 0 & 0 & 0 \\
                                 0.11 & 0.07 & -0.03 & -0.28 & 0.37 \\
                                 -0.13 & -0.26 & -0.22 & -0.22 & -0.18 \\
                                 0.00 & -0.07 & 0.07 & 0.39 & -0.16 \\
                                 -0.04 & 0.55 & 0.08 & -0.04 & -0.43 \\
                               \end{array}
                             \right){\bf k},
\end{align*}}
and
{\scriptsize
 \begin{align*}
               S&=\left(
                       \begin{array}{ccccc}
                         5.90 & 0 & 0 & 0 & 0 \\
                         0 & 4.47 & 0 & 0 & 0 \\
                         0 & 0 & 2.96 & 0 & 0 \\
                         0 & 0 & 0 & 1.75 & 0 \\
                         0 & 0 & 0 & 0 & 0 \\
                       \end{array}
                     \right).
                      \end{align*}}
Moreover, by  Theorem \ref{t:qsvd} the optimal rank 4 approximation of ${\bf A}$ can be expressed as $\tilde{{\bf A}}=\tilde{{\bf A}}_{re}+\tilde{{\bf A}}_{\text{p}},$ where
{\scriptsize
\begin{align*}
\tilde{{\bf A}}_{\text{re}}&=\left(
                 \begin{array}{ccccc}
                   0.03 & 0.17 & 0.10 & 0.03 & -0.02 \\
                   -0.05 & -0.03 & 0.14 & 0.03 & 0.06 \\
                   0.01 & 0.02 & 0.06 & 0.03 & 0.04 \\
                   0.03 & -0.07 & -0.07& 0.02& 0.03 \\
                   -0.03 & 0.02 & 0.03 & -0.02 & -0.02 \\
                 \end{array}
               \right),\\
\tilde{{\bf A}}_{\text{p}}&=\left(
                 \begin{array}{ccccc}
                   0.46 & -0.75 & 0.03 & -0.70 & 0.04 \\
                   0.06 & -1.36 & -0.10 & 1.66 & -1.53 \\
                   -0.04 & -0.30 & 0.37 & 1.26 & -0.65 \\
                   0.03 & -2.00 & 0.82 & 0.14 & -0.82 \\
                   -0.02 & -0.37 & -0.13 & 0.91 & 0.56 \\
                 \end{array}
               \right){\bf i}+
               \left(
                 \begin{array}{ccccc}
                   0.25 & -0.47 & 0.01 & -1.73 & 0.31 \\
                   0.67 & -0.86 & 0.78 & -0.16 & -0.48 \\
                   -1.17 & 0.95 & -1.96 & 0.63 & 1.62 \\
                   1.91 & -1.05 & -0.08 & -1.29 & 0.75 \\
                   2.35 & 0.11 & 0.27 & -0.29 & -0.58 \\
                 \end{array}
               \right){\bf j}\\
               &+\left(
                   \begin{array}{ccccc}
                     0.38 & 0.69 & -1.42 & -0.70 & 0.89 \\
                     1.09 & -1.34 & 0.34 & -0.03 & 0.40 \\
                     0.23 & -0.73 & 0.36 & -0.71 & 0.77 \\
                     0.53 & -0.41 & 0.92 & 0.84 & -0.55 \\
                     0.66 & -1.55 & 0.36 & -1.51 & 0.17 \\
                   \end{array}
                 \right){\bf k}.
\end{align*}}
In this case, $\tilde{{\bf A}}_{\text{p}}$ is often chosen as the optimal 
pure quaternion rank $4$ approximation. However, $\rank(\tilde{{\bf A}}_{\text{p}})=5$, i.e., $\tilde{{\bf A}}_{\text{p}}$ is essentially not a rank $4$ approximation. Recall  Proposition \ref{rankincrease}  that the rank of a quaternion matrix will increase if  its real part is removed.
Then if one want to find a  rank $r$ 
pure quaternion
approximation, less than $r$ truncation of ${\bf A}$ is suitable.
So, we can get a  rank 4 pure quaternion
approximation {\scriptsize
\begin{align*}
{\bf A}_{\text{p}}&=
  \left(
    \begin{array}{ccccc}
      1.05 & -0.78 & 0.37 & -0.59 & 0.13 \\
      -1.32 & 0.25 & 0.69 & 1.07 & -0.26 \\
      0.20 & -0.06 & 0.31 & 0.03 & -0.01 \\
      -1.30 & 0.58 & 0.16 & 0.73 & 0.02 \\
      -0.27 & -0.38 & 0.98 & 0.42 & -0.03 \\
    \end{array}
  \right){\bf i}+\left(
     \begin{array}{ccccc}
       0.56 & -0.14 & -0.15 & -0.32 & 0.00 \\
       0.68 & -0.44 & 0.54 & -0.42 & 0.33 \\
       -0.78 & -0.93 & -0.65 & 0.24 & 0.12 \\
       1.52 & -1.07 & 0.39 & -0.97 & 0.28 \\
       1.28 & -0.47 & 0.08 & -0.88 & 0.37 \\
     \end{array}
   \right){\bf j}\\
   &+\left(
              \begin{array}{ccccc}
                0.17 & 0.09 & -0.57 & -0.30 & 0.04 \\
                1.21 & -1.20 & 0.86 & -0.40 & -0.14 \\
                0.87 & -0.42 & 0.17 & -0.61 & 0.30 \\
                -0.05 & -0.72 & 0.84 & 0.40 & -0.36 \\
                1.26 & -1.14 & 0.41 & -0.63 & -0.08 \\
              \end{array}
            \right){\bf k},
\end{align*}}
by deleting the real part of its optimal rank 1 approximation of ${\bf A}$. In this case, the objective function values are $\|{\bf A}-{\bf A}_{4}\|_{F}=0.6479$ and $\|{\bf A}-{\bf A}_{\text{p}}\|_{F}=5.3579,$ respectively.  We see that approximation derived by the proposed ``AltProj'' algorithm is better than that  derived by the ``QsvdTr" algorithm.

\subsection{Random Matrices}\label{ssec2}
 In our second experiment, we use random low rank 
pure quaternion matrix to illustrate the validity of Algorithm \ref{pmain}.  In the proposed `AltProj'' algorithm, the maximum number of iterations is chosen as $5000$ steps. And the iterations stops  when the residual, i.e., the Frobenius norms of the real part is less than $10^{-6}$. Since it is hard to generate a random low rank pure quaternion matrix directly, the following alternative method is applied.  We first generate m-by-n quaternion  matrices ${\bf A}=A_{0}+A_{1}{\bf i} +A_{2}{\bf j} +A_{3}{\bf k},$ where the matrix entries of $A_{i}, i=0,1,2,3$ follow  the standard normal distribution. Random quaternion matrices with ranks $1,2,3,4,5$ and $10$ can be derived by applying 
QSVD truncations on ${\bf A}$, respectively. It follows Proposition \ref{rankincrease} that the rank of quaternion matrix will increase when the real part is deleted. In addition, when the quaternion matrix is randomly generated with small rank, the columns of the different full rank decompositions with three kinds of  conjugate definitions are always independents then  $\rank({\bf A}_{p})=4\rank({\bf A})$ holds at most times.
 Although this result cannot be guaranteed to be correct in general, we often use this method to obtain approximate solutions of some problems in practical application. Then we can get ranks $4,8,12,16,20$ 
and $40$ pure quaternion matrices  by  setting the real parts of the these low rank quaternion matrices to be zeros, respectively.

Tables \ref{table1} shows the running times  and the objective function values  of the computed solutions from the proposed ``AltProj'' algorithm and the ``QsvdTr'' algorithm for ranks $4,8,12,16,20$ and $40$ random 
pure quaternion matrices sets of sizes 100-by-100, 200-by-200 and 500-by-500, respectively.  When the input quaternion matrix ${\bf A}$ is exactly a low rank 
pure quaternion matrix, the proposed ``AltProj'' algorithm can provide exact recovery results in the first iteration. However, there is no guarantee that the ``QsvdTr'' algorithm  can determine the 
low rank pure quaternion matrix. In the
tables, it is clear that the ``QsvdTr'' algorithm  cannot obtain the underlying low
rank factorization. The running times of the ``AltProj'' algorithm is nearly two times of ``QsvdTr'' algorithm.

\begin{table}[!h]\label{table1}
\centering
\caption{ The running times (Time) and the objective function values (OBF-value) by different algorithms for low rank pure quaternion approximations of random low rank 
pure quaternion matrices.} \label{table1}
\begin{tabular}{|c|c|cccccc|}
\hline
        \multirow{2}{*}{method} & \multirow{2}{*}{item} &\multicolumn{6}{c|}{100-by-100 quaternion matrix} \\
  \cline{3-8}       &                      &r=4         &r=8        &r=12       &r=16        &r=20        &r=40\\\hline

     \multirow{2}{*}{QsvdTr}           & OBF-value        &8.89        &12.78      &15.61    &18.05       &20.22        &28.03    \\\cline{2-8}
                                       & Time (s)  &0.28        &0.28      &0.29      &0.29       &0.29        &0.30 \\\hline
     \multirow{2}{*}{AltProj}          & OBF-value        &4.09e-14    &1.65e-13   &6.80e-13 &1.90e-13    &1.30e-13     &1.87e-13\\\cline{2-8}
                                       & Ttime (s)  &0.56        &0.57       &0.57     &0.58      &0.57         &0.58\\\hline
    \hline
        \multirow{2}{*}{method} & \multirow{2}{*}{item} &\multicolumn{6}{c|}{200-by-200 quaternion matrix} \\
  \cline{3-8}       &                      &r=4         &r=8        &r=12       &r=16        &r=20        &r=40\\\hline

     \multirow{2}{*}{QsvdTr}        & OBF-value               &12.58         &17.79      &21.91    &25.31       &28.24        &40.22 \\\cline{2-8}
                                    & Time (s)             & 1.40        &1.42      &1.41      &1.42       &1.46         &1.44 \\\hline
     \multirow{2}{*}{AltProj}        & OBF-value              &6.07e-14     &1.28e-13   &2.86e-13  &3.40e-13    &2.25e-13     &3.21e-13\\\cline{2-8}
                                     &  Time (s)          &2.79         &2.80       &2.83      &2.84        &2.83       &2.85\\\hline
  \hline
        \multirow{2}{*}{method} & \multirow{2}{*}{item} &\multicolumn{6}{c|}{500-by-500 quaternion matrix} \\
  \cline{3-8}       &                      &r=4         &r=8        &r=12       &r=16        &r=20        &r=40\\\hline

     \multirow{2}{*}{QsvdTr}        & OBF-value            &19.57        &27.61      &33.98     &39.07       &43.92        &62.39 \\\cline{2-8}
                                   &  Time (s)         & 19.11       &18.94      &19.03      &19.13       &19.11        &19.27 \\\hline
     \multirow{2}{*}{AltProj}        & OBF-value            &9.50e-14     &2.70e-13   &3.19e-13 &6.13e-13    &4.66e-13     &6.57e-13\\\cline{2-8}
                                    & Time (s)         &38.06        &38.14       &38.45    &38.48      &38.21        &38.48\\\hline
\end{tabular}
\end{table}

In our third experiment, we use random pure quaternion matrix to compare the two algorithms, where the low rank minimizer is unknown in this setting.   The maximum number of iterations and the tolerance of the residual relate to the ``AltProj'' algorithm are chosen as $5000$ steps and $10^{-6}$, respectively. We randomly generate $m$-by-$n$ pure quaternion  matrices ${\bf A}=A_{1}{\bf i} +A_{2}{\bf j} +A_{3}{\bf k},$ where the matrix entries of $A_{i}, i=1,2,3$ follow  the standard normal distribution.   Then by applying the  ``QsvdTr" algorithm  and the proposed ``AltProj" algorithm  to  ${\bf A},$ we can find its optimal ranks $4,8,12,16,20$ and $40$ pure quaternion approximations, respectively. Table \ref{ntable1} shows that the running times and the objective function values
of the computed solution ${\bf X}^{k}$ from the proposed ``AltProj'' algorithm and the ``QsvdTr'' algorithm.
We see from Table \ref{ntable1} that the objective function values  computed by the proposed
``AltProj''  algorithm are smaller than that derived  by the testing ``QsvdTr'' algorithm, although the corresponding running times are longer.

\begin{table}[!h]\label{ntable1}
\centering
\caption{ The running times (Time) and the objective function values (OBF-value) by different algorithms for low rank pure quaternion approximations of random pure quaternion matrices.} \label{ntable1}
\begin{tabular}{|c|c|cccccc|}
\hline
        \multirow{2}{*}{method} & \multirow{2}{*}{item} &\multicolumn{6}{c|}{100-by-100 quaternion matrix} \\
  \cline{3-8}       &                                     &r=4         &r=8        &r=12       &r=16        &r=20        &r=40\\\hline
     \multirow{2}{*}{QsvdTr}        & OBF-value             &171.00        &166.97      &163.10    &159.43       &155.78        &139.52 \\\cline{2-8}
                                     & Time (s)         & 0.32         &0.33         &0.32    &0.31           &0.32        &0.31 \\\hline
     \multirow{2}{*}{AltProj}        & OBF-value            &168.48       &160.66       &152.29    &143.66        &1.35.24     &91.12\\\cline{2-8}
                                     &Time (s)       &1.73     &33.36   &2.83 &48.37      &24.01  &12.70\\\hline
    \hline
        \multirow{2}{*}{method} & \multirow{2}{*}{item} &\multicolumn{6}{c|}{200-by-200 quaternion matrix} \\
  \cline{3-8}       &                      &r=4         &r=8        &r=12       &r=16        &r=20        &r=40\\\hline

     \multirow{2}{*}{QsvdTr}           &OBF-value            &342.33        &338.21      &334.31    &330.57     &326.79      &308.86 \\\cline{2-8}
                                       &Time (s)       &1.56          &1.55        &1.51      &1.55       &1.56        &1.55 \\\hline
     \multirow{2}{*}{AltProj}          & OBF-value            &339.88     &332.94   &325.36 &317.80   &310.34     &268.59\\\cline{2-8}
                                       & Time (s)         &13.42     &11.71   &11.31 &14.60      &29.84  &30.48\\\hline
  \hline
        \multirow{2}{*}{method} & \multirow{2}{*}{item} &\multicolumn{6}{c|}{500-by-500 quaternion matrix} \\
  \cline{3-8}       &                      &r=4         &r=8        &r=12       &r=16        &r=20        &r=40\\\hline

     \multirow{2}{*}{QsvdTr}           &OBF-value            &861.25        &857.10      &853.01    &848.98       &844.98        &825.60 \\\cline{2-8}
                                       &Time (s)        & 20.58       &20.67      &20.32    &20.28       &20.48        &21.75 \\\hline
     \multirow{2}{*}{AltProj}          & OBF-value            &860.02     &853.28   &846.58 &839.92    &823.73     &794.54\\\cline{2-8}
                                       & Time (s)         &72.04     &184.56   &175.56 &1460.45      &405.81  &1099.45\\\hline
\end{tabular}
\end{table}

\subsection{Color Images}\label{ssec3}
In this subsection,  we employ the color images `peppafamily'', ``pepper'' and ``colortexture'' with sizes $200$-by-$200$
to compare ``QsvdTr'' algorithm and ``AltProj''algorithm  in terms of objective function values and time.
The maximum number of iterations and the tolerance of the residual relate to the ``AltProj'' algorithm are chosen as $5000$ steps and $10^{-6}$, respectively.
The original three color images employed in this subsection are shown in the first column of Figure 1, and the rank $16$ and $20$ pure quaternion approximations by the ``QsvdTr'' algorithm are listed in the second and fourth columns, the optimal rank $16$ and $20$ pure quaternion
approximations derived by our algorithm are listed in the third and fifth columns. For the two cases, we can see respectively that the images derived by the  ``AltProj'' algorithm  are better than those derived by ``QsvdTr'' algorithm in terms  of visual quality. The  ``AltProj'' algorithm can preserve more details than
``QsvdTr'' algorithm for the three testing images.

Moreover, we also compute the objective function values of  ranks $4, 8, 12, 16, 20$ and $40$ approximations, respectively, which illustrates the validity of our method. The results and shown in Table \ref{table5}. It can be seen
that the  objective function values  obtained by ``AltProj" algorithm are much lower
than those by ``QsvdTr'' algorithm. For the ``colortexture'' and ``pepper'' images, the  objective function values  of the ``AltProj" algorithm are nearly half of that derived by ``QsvdTr'' algorithm.
\begin{figure}[!h] \label{f1}
\centering
\begin{tabular}{ccccc}
\includegraphics[width=0.85in]{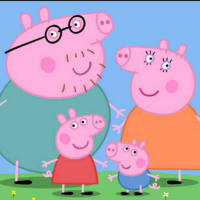} &
\includegraphics[width=0.85in]{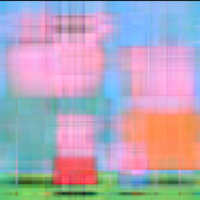}&
\includegraphics[width=0.85in]{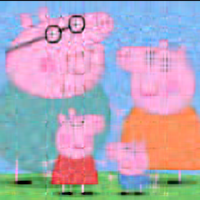}&
\includegraphics[width=0.85in]{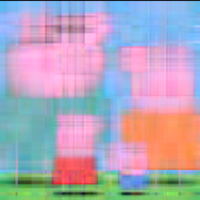}&
\includegraphics[width=0.85in]{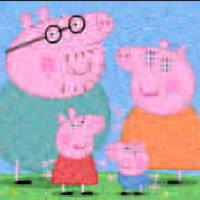}\\
\includegraphics[width=0.85in]{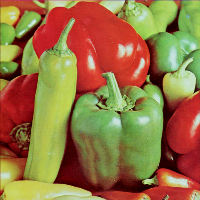} &
\includegraphics[width=0.85in]{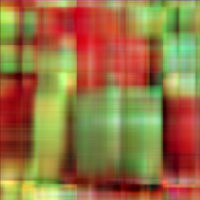}&
\includegraphics[width=0.85in]{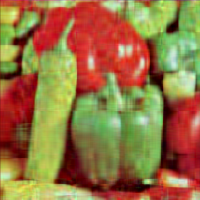}&
\includegraphics[width=0.85in]{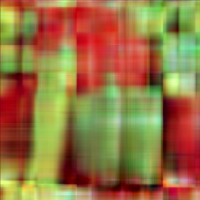}&
\includegraphics[width=0.85in]{opep4.png}\\
%%%%%%%%%%%%%%%%%%%%%%%%%%%%%%%%%%%%%%%%%%%%%%%%%%%%
\includegraphics[width=0.85in]{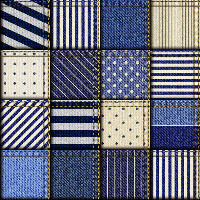}&
\includegraphics[width=0.85in]{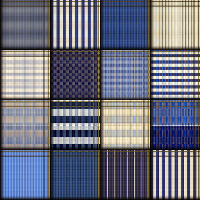} &
\includegraphics[width=0.85in]{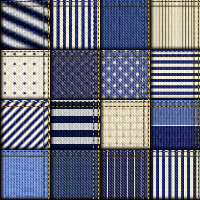}&
\includegraphics[width=0.85in]{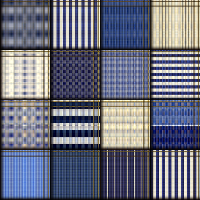} &
\includegraphics[width=0.85in]{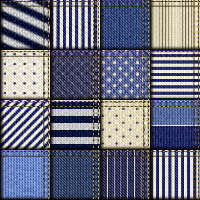}\\
original graph&$\textrm{r}=16,$ QsvdTr  & $\textrm{r}=16,$ AltProj  & $\textrm{r}=20,$ QsvdTr & $\textrm{r}=20,$ AltProj
\end{tabular}
\caption{ ranks 16 and 20 approximations of ``peppafamily'', ``pepper'' and ``colortexture'' by QsvdTr and AltProj algorithms, respectively}\label{fig6}
\end{figure}
\begin{table}[!h]
\centering
\caption{ The running times (Time) and the objective function values (OBF-value)  by different algorithms for low rank  approximations of  color images} \label{table5}
\begin{tabular}{|c|c|cccccc|}
    \hline
        \multirow{2}{*}{method} & \multirow{2}{*}{item} &\multicolumn{6}{c|}{peppafamily} \\
  \cline{3-8}       &                      &r=4         &r=8        &r=12       &r=16        &r=20        &r=40\\\hline

     \multirow{2}{*}{QsvdTr}           &OBF-value            &13731.6    &10944.1    &9450.5    &8742.4    &8143.7      &6470.5  \\\cline{2-8}
                                       &Time (s)        &1.55          &1.55        &1.51      &1.55       &1.56        &1.55 \\\hline
     \multirow{2}{*}{AltProj}          & OBF-value            &9077.2     &7351.9     &6218.8    &5439.9    &4770.3      &2726.0\\\cline{2-8}
                                       & Time (s)         &189.78     &189.06   &191.38 &190.19      &195.46  &211.15\\\hline
  \hline
        \multirow{2}{*}{method} & \multirow{2}{*}{item} &\multicolumn{6}{c|}{pepper} \\
  \cline{3-8}       &                      &r=4         &r=8        &r=12       &r=16        &r=20        &r=40\\\hline

     \multirow{2}{*}{QsvdTr}           &OBF-value            &17831.5     &14407.2    &12587.8   &11147.5   &10153.1     &7185.0  \\\cline{2-8}
                                       &Time (s)       & 1.67       &1.57      &1.68    &1.56       &1.59        &1.56 \\\hline
     \multirow{2}{*}{AltProj}          & OBF-value            &11392.7     &8494.4     &6804.4    &5667.4    &4949.5      &2923.9\\\cline{2-8}
                                       & Time (s)         &201.76     &206.77   &221.51 &206.21      &193.88  &199.59\\\hline
  \hline
        \multirow{2}{*}{method} & \multirow{2}{*}{item} &\multicolumn{6}{c|}{colortexture} \\
  \cline{3-8}       &                                     &r=4         &r=8        &r=12       &r=16        &r=20        &r=40\\\hline
     \multirow{2}{*}{QsvdTr}        & OBF-value             &21964.6   &18261.4   &15485.2   &14034.7  &13155.7     &9530.4  \\\cline{2-8}
                                     & Time (s)        & 1.63         &1.57         &1.57    &1.63           &1.85        &1.55 \\\hline
     \multirow{2}{*}{AltProj}        & OBF-value            &14039.5   &10864.6   &8583.1    &7199.8   &6527.4      &4496.8 \\\cline{2-8}
                                     & Time (s)       &219.47    &199.95   &215.61 &205.76      &197.87  &203.71\\\hline
\end{tabular}
\end{table}

\subsection{Initialization }
The experiments in this section are conducted under Windows 10 and Matlab R2017a running on a desktop (Intel Core i7-8700, CPU @ 3.20GHz, 16.0G RAM).
We still employ the three color images ``peppafamily'', ``pepper'', and ``colortexture'' to verify the effectiveness of our proposed initialization strategy using the Douglas-Rachford splitting method in Section \ref{sec:initia}.
We test the performances for (1) the alternating projections, (2) DRSM, (3) the alternating projections initialized by DRSM on the best rank-$20$ approximations to these three images.
In the DRSM, we set the parameters $\tau$ and $\alpha$ adaptively.
A large penalty parameter $\tau$ implies a good approximation \label{eq_ind2} to the original problem \label{eq_ind1}.
However, it also leads to a small stepsize, which may cause a slow convergence and also overflows in the floating point arithmetic.
Hence, we gradually increase $\tau$ but keep it constant when it is large enough:
$$
\tau_0 = 1, \quad \tau_{k} =
\left\{
\begin{array}{ll}
  2\tau_{k-1}, & k \leq 1000, \\
  \tau_{k-1}, & k > 1000.
\end{array}
\right.
$$
We adopt the strategy for adaptively choosing the stepsize $\alpha$ in \cite{LP16}:
$$
\alpha_0 = \frac{150}{1+\tau_0}, \quad \alpha_{k} = \max\bigg\{ 0.7\alpha_{k-1},\frac{0.99}{1+\tau_k} \bigg\},
$$
which satisfy the convergence conditions when $k$ is sufficiently large.
\textcolor{red}{
When $\tau_k$ is large, the corresponding $\alpha_k$ will be close to zero.
Then the coefficients in the iteration \eqref{eq_drsm_initial} will be close to either zero or $O(1)$ constants.
Hence, a large parameter $\tau_k$ will not lead to numerical problems when iterating.
}

For the images ``peppafamily'' and ``pepper'', we set the maximum total number of iterations as $5000$ steps, where $500$ steps of DRSM for initialization.
For the image ``colortexture'', for which the involved algorithms converge slower than the other images, we set the maximum total number of iterations as $10000$ steps, where $3000$ steps of DRSM for initialization.
We also cease the iterations when the residual, the Frobenius norms of the real part, is less than $10^{-6}$.

Figure \ref{fig_res} displays the residuals decreasing with the iterations and the running time.
The convergence of the DRSM can be very slow although guaranteed.
One can conclude from the numerical comparisons that the alternating projections converges much faster with the initialization by DRSM than simply taking the original image as the initial point.

We also present the singular values of the original images (quaternion matrices) and the results for different algorithms in Figure \ref{fig_sv}.
The truncations of the singular values with our proposed initialization strategy are the most ``clear'' among these three algorithms.
That is, the final results with the initialization are the closest to an actual rank-$20$ quaternion matrix.

\begin{figure}[!h]
  \centering
  \includegraphics[width=170pt]{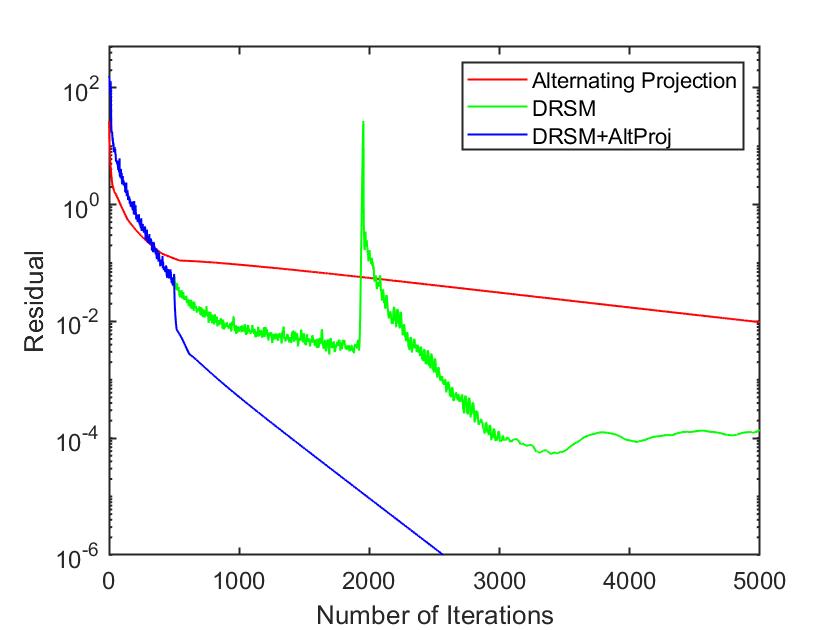}
  \includegraphics[width=170pt]{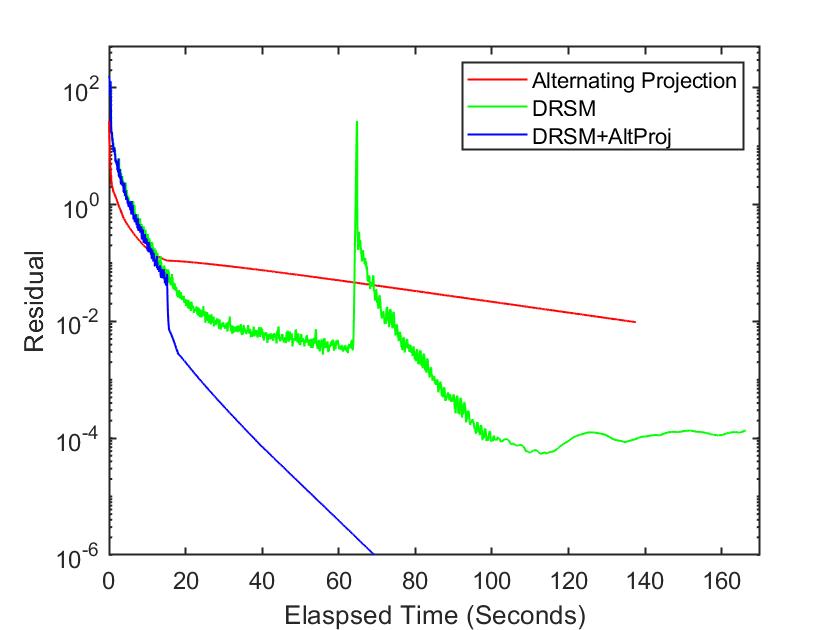}
  \includegraphics[width=170pt]{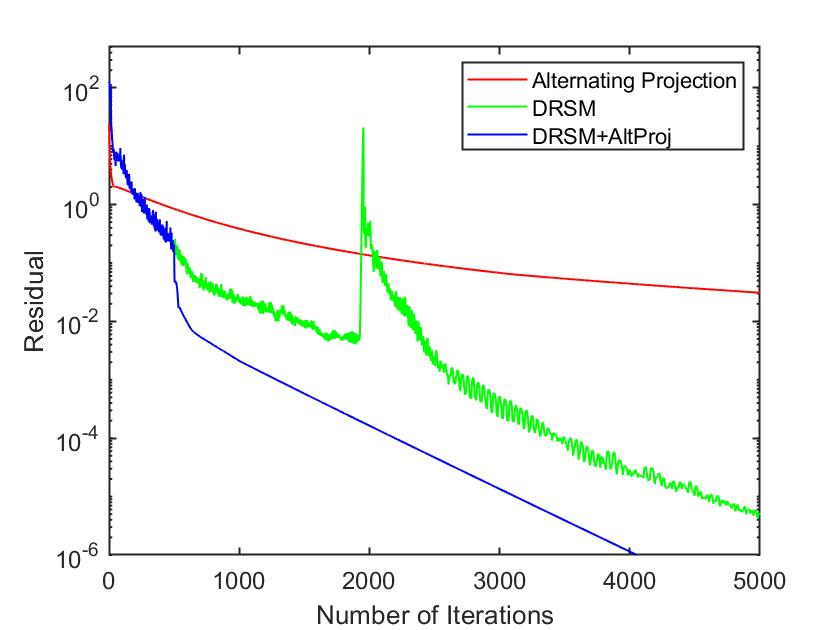}
  \includegraphics[width=170pt]{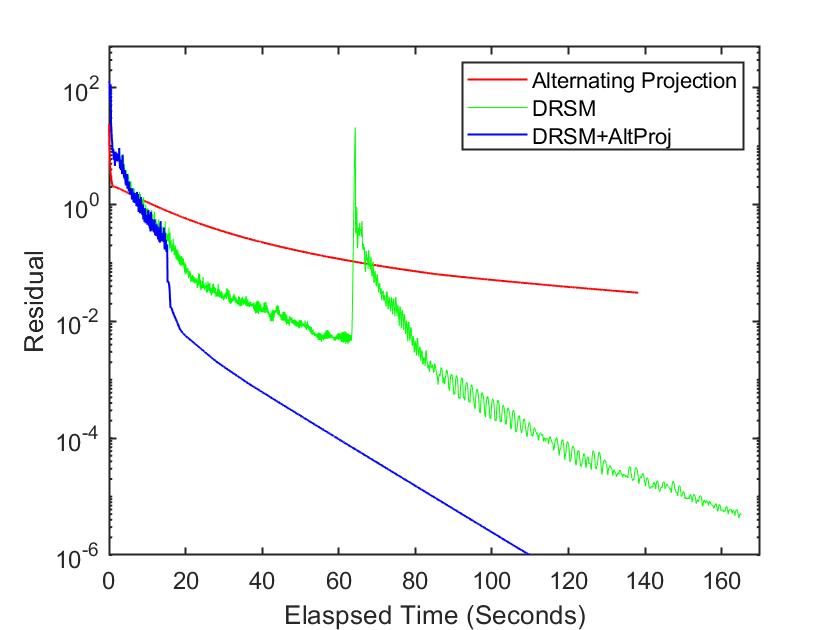}
  \includegraphics[width=170pt]{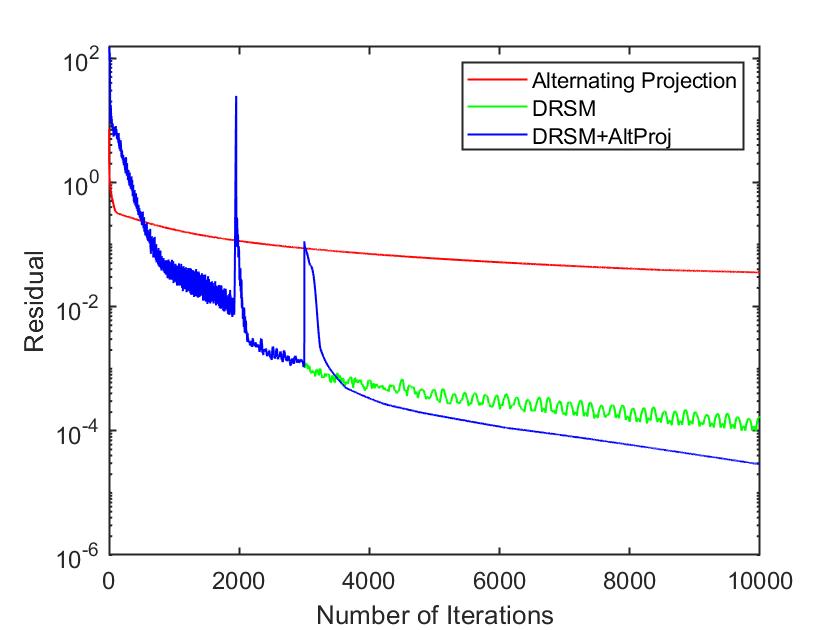}
  \includegraphics[width=170pt]{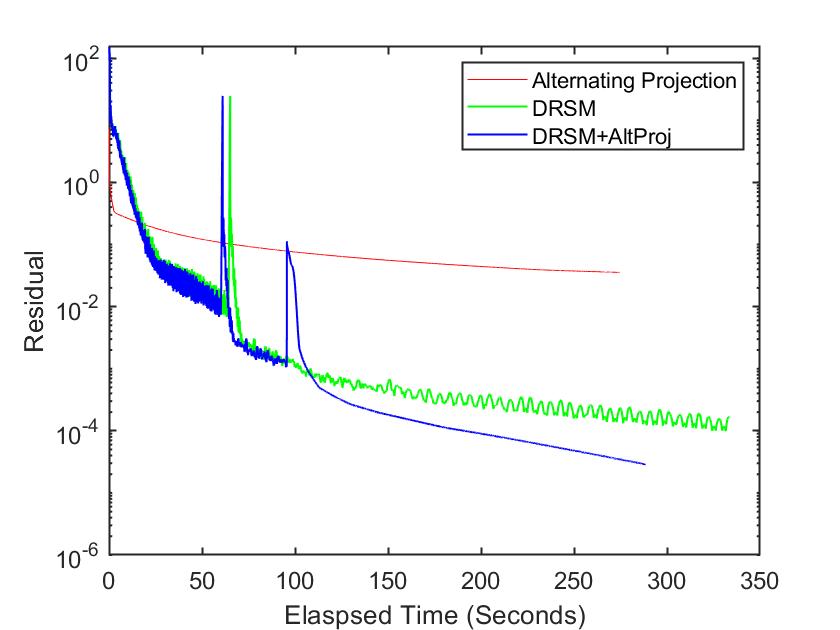}
  \caption{The residuals for the alternating projections, DRSM, the alternating projections initialized by DRSM. These three rows display the cases for ``peppafamily'', ``pepper'', and ``colortexture'', respectively. The figures in the left column show the residuals against the number of iterations, and the figures in the right column show the residuals against the elapsed time.}\label{fig_res}
\end{figure}

\begin{figure}[!h]
  \centering
  \includegraphics[width=170pt]{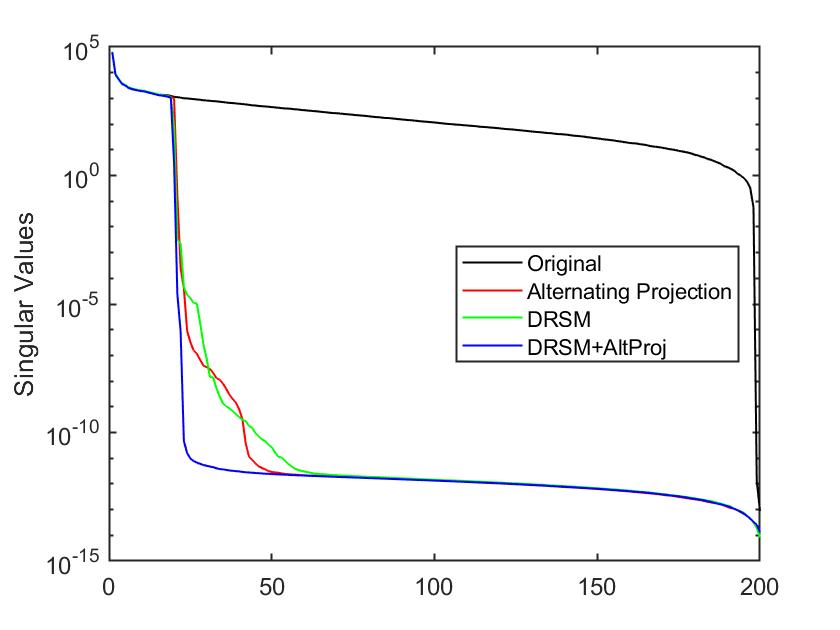}
  \includegraphics[width=170pt]{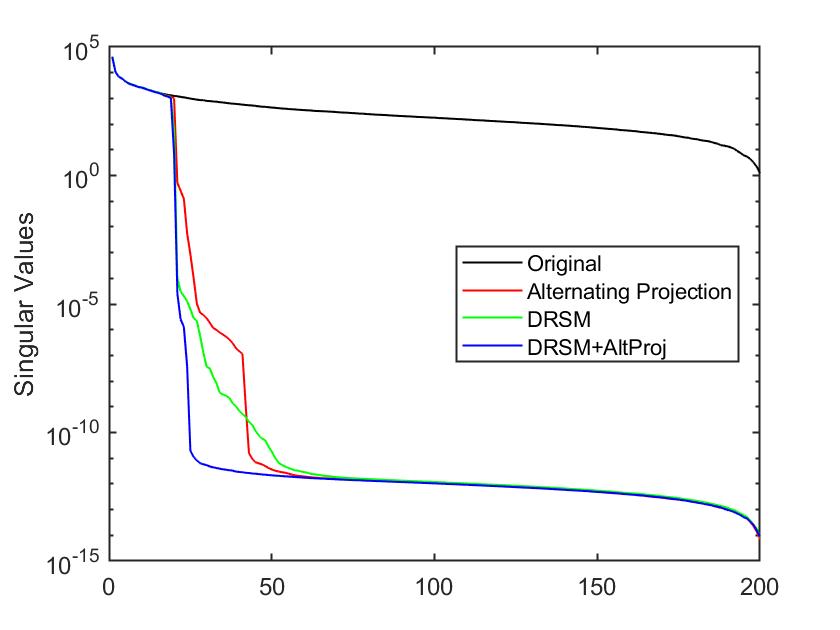}
  \includegraphics[width=170pt]{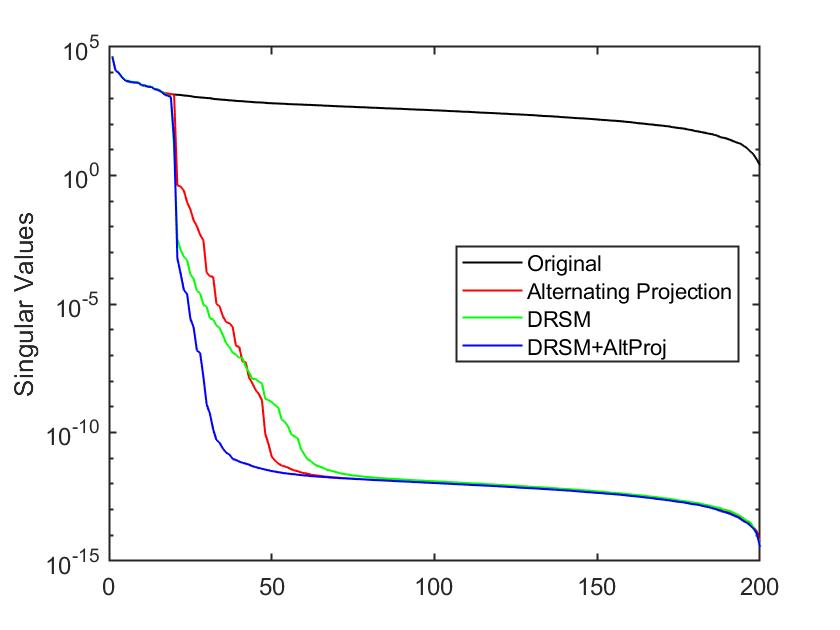}
  \caption{The singular values of the original images and the results for the alternating projections, DRSM, the alternating projections initialized by DRSM. These three figures display the cases for ``peppafamily'', ``pepper'', and ``colortexture'', respectively.}\label{fig_sv}
\end{figure}

%%%%%%%%%%%%%%%%%%%%%%%%%%%%%%
\section{Conclusion}\label{sec:con}
%%%%%%%%%%%%%%%%%%%%%%%%%%%%%%%%
We propose the alternating projections method for computing the optimal 
rank-$r$ 
pure quaternion
approximation to any pure quaternion matrix, which consists of the alternating projections onto the rank-$r$ quaternion matrix manifold and the 
pure quaternion matrix manifold.
The linear local convergence for the alternating projections method is proved employing the manifold structures.
In order to guarantee the quality of the limit point and pursue a faster convergence rate, we also propose an initialization strategy using the Douglas-Rachford splitting method to search for an initial point in some neighborhood of the intersection manifold.
Furthermore, we also conduct numerical experiments on both random matrices and real-world color images to illustrate the effect of our proposed alternating projections method and the initialization strategy.

%%%%%%%%%%%%%%%%%%%%%%%%%%%%%%%%%%%%

%\bibliography{refmanifold}
%%%%%%%%%%%%%%%%%%%%%%%%%%%%%%%%%%%%%
\section*{Appendix}
\vspace{3mm}
\noindent
{\bf Proof of Lemma \ref{qm1}}:  For a rank-$r$ quaternion matrix ${\bf E}=E_{0}+E_{1}{\bf i}+E_{2}{\bf j}+E_{3}{\bf k}\in \mathcal{Q},$   by the elementary transformations it can be expressed as
\begin{align*}
{\bf E}=\left(
    \begin{array}{cc}
      {\bf A} & {\bf B} \\
      {\bf C} & {\bf D} \\
    \end{array}
  \right)
\end{align*}
where ${\bf A}\in \mathbb{H}^{r\times r}$ is invertible, ${\bf B}\in \mathbb{H}^{r\times (m-r)},$ ${\bf C}\in \mathbb{H}^{(n-r)\times r}$ and ${\bf D}\in \mathbb{H}^{(m-r)\times (n-r)}.$  It is easy to find an invertible quaternion matrix
$$
{\bf P}=\left(
          \begin{array}{cc}
            {\bf A}^{-1} & -{\bf A}^{-1}{\bf B} \\
            0 & {\bf I}_{n-k} \\
          \end{array}
        \right)
$$
such that
\begin{align*}
{\bf EP}=\left(
    \begin{array}{cc}
      {\bf A} & {\bf B} \\
      {\bf C} & {\bf D} \\
    \end{array}
  \right)\left(
          \begin{array}{cc}
            {\bf A}^{-1} & -{\bf A}^{-1}{\bf B} \\
            0 &{\bf I}_{n-k} \\
          \end{array}
        \right)=\left(
          \begin{array}{cc}
            {\bf I}_{r} & 0 \\
            {\bf CA^{-1}} & {\bf D-CA^{-1}B} \\
          \end{array}
        \right),
\end{align*}
with  ${\bf D-CA^{-1}B}=0.$ Let
\begin{align*}
{\bf U}=\left\{\left(
    \begin{array}{cc}
      {\bf A} & {\bf B} \\
      {\bf C} & {\bf D} \\
    \end{array}
  \right)\in \mathbb{H}^{m\times n}: {\bf A}~ \text{is ~invertible}
\right\}
\end{align*}
be an open set of $\mathbb{H}^{m\times n}$ which contains ${\bf E}$.
Denote $ \hat{\mathbb{R}}^{4m\times 4n}$ as the set of matrices that possess the structure as the real expression of an $m\times n$ quaternion matrix.
By the projection $\phi$ defined in \eqref{realq}, ${\bf U}$ is  isomorphic to
$$\hat{U}=\left\{\left(
    \begin{array}{cc}
     \hat{ {\bf A}} & \hat{{\bf B}} \\
      \hat{{\bf C}} & \hat{{\bf D}} \\
    \end{array}
  \right)\in \hat{\mathbb{R}}^{4m\times 4n}: {\bf \hat{A}}\in \hat{\mathbb{R}}^{4r\times 4r}~ \text{is ~invertible}
\right\},$$
which is a open subset of $\hat{\mathbb{R}}^{4m\times 4n}.$
 Hence, we can define
$F\circ \phi:{\bf U}\rightarrow  \hat{\mathbb{R}}^{4(m-r)\times 4(n-r)}$ as
\begin{align*}
F\circ \phi\left(
    \begin{array}{cc}
      {{\bf A}} & {{\bf B}} \\
      {{\bf C}} & {{\bf D}} \\
    \end{array}
  \right)=\hat{{\bf D}}-{\bf \hat{C}\hat{A}^{-1}\hat{B}}.
\end{align*}
Clearly, $F\circ \phi$ is smooth. In order to show it is a submersion, we need to show $D(F\circ \phi)({{\bf E}})$ is surjective for each ${{\bf E}}\in {\bf U}.$  Note that $\hat{\mathbb{R}}^{4(m-r)\times 4(n-r)}$ is a vector space, the tangent vectors at $F\circ \phi({{\bf E}})$ can be identified  by the matrices in $\hat{\mathbb{R}}^{4(m-r)\times 4(n-r)}$. Given ${{\bf E}}=\left(
    \begin{array}{cc}
     {{\bf A}} & {{\bf B}} \\
      {{\bf C}} & {{\bf D}} \\
    \end{array}
  \right)
$
and any matrix $X\in \hat{\mathbb{R}}^{4(m-r)\times 4(n-r)},$ define a curve $\tau :(-\xi,\xi)\rightarrow \hat{U}$ by
\begin{align*}
\tau(t)=\left(
    \begin{array}{cc}
      \hat{{\bf A}} & \hat{{\bf B}} \\
      \hat{{\bf C}} & \hat{{\bf D}}+tX \\
    \end{array}
  \right).
\end{align*}
Then
\begin{align*}
(F\circ \phi)_{*}\tau^{'}(0)=(F\circ \phi\circ\tau )^{'}(t)=\frac{d}{dt}|_{t=0}(\hat{{\bf D}}+tX-{\bf \hat{C}\hat{A}^{-1}\hat{B}})=X,
\end{align*}
 where $(F\circ \phi)_{*}$ is the push-forward projection relate $F\circ \phi.$
 Then $F\circ \phi$ is a submersion and so  $\mathcal{Q}\cap \bf{U}$  is an embedded submanifold of $\mathbb{H}^{m\times n}$.
Next, if $\bf{E}'$ is an arbitrary quaternion matrix with $\rank(\bf{E}')=r,$  then it can be transformed to a quaternion matrix in $\bf{U}$ by a rearrangement along its rows and columns.  Let $\omega$ denote such a rearrangement which preserves the quaternion matrix rank. It follows that  ${\bf U}_0 = \omega^{-1}({\bf U})$ is a neighborhood of $\bf{E}'$ and $F\circ \phi\circ \omega: \bf{U}_0\rightarrow \hat{\mathbb{R}}^{4(m-r)\times 4(n-r)}$ is a submersion whose zero level set is $\mathcal{Q}\cap \bf{U}_{0}.$  Thus every point in $\mathcal{Q}$ has a neighborhood ${\bf U}_{0}\subseteq \mathbb{H}^{m\times n}$ such that $\mathcal{Q}\cap {\bf U}_0$ is an embedded submanifold of ${\bf U}_0$, so $\mathcal{Q}$ is an embedded submanifold. Moreover, note that $\dim ((F\circ \phi)_{*}\tau^{'}(0))=4(m+n)r-4r^2$ which is saying that $\mathcal{Q}$ possess dimension $4(m+n)r-4r^2.$ $\quad \Box$

\vspace{3mm}
\noindent
{\bf Proof of Proposition \ref{rankincrease}}:
Denote ${\bf A}^{*{\bf ij}}=A_{0}-A_{1}{\bf i}-A_{2}{\bf j}+A_{3}{\bf k}$ then for two arbitrary quaternion matrices ${\bf B,C}$, we have $({\bf BC})^{*{\bf ij}}={\bf B}^{*{\bf ij}}{\bf C}^{*{\bf ij}}.$ Similarly, we have $({\bf BC})^{*{\bf ik}}={\bf B}^{*{\bf ik}}{\bf C}^{*{\bf ik}}$ and $({\bf BC})^{*{\bf jk}}={\bf B}^{*{\bf jk}}{\bf C}^{*{\bf jk}}.$ Note that $\rank({\bf A})=r$,
 then there exist a full column rank matrix ${\bf U}\in \mathbb{H}^{m\times r}$ and a full row rank matrix ${\bf V}\in \mathbb{H}^{r\times n}$ such that ${\bf A}={\bf U}\cdot {\bf V}$. Moreover,
 \begin{align*}{\bf A}^{*{\bf ij}}={\bf U}^{*{\bf ij}}\cdot {\bf V}^{*{\bf ij}},{\bf A}^{*{\bf ik}}={\bf U}^{*{\bf ik}}\cdot {\bf V}^{*{\bf ik}},\text{~and~}{\bf A}^{*{\bf jk}}={\bf U}^{*{\bf jk}}\cdot {\bf V}^{*{\bf jk}}.
 \end{align*}For the pure quaternion
part of  ${\bf A}$, we have
 \begin{align*}
 {\bf A}_{\text{p}}&=\frac{1}{4}(({\bf A}-{\bf A}^{*{\bf ij}})+({\bf A}-{\bf A}^{*{\bf jk}})+({\bf A}-{\bf A}^{\bf*ik}))=\frac{1}{4}(3{\bf A}-{\bf A}^{\bf*ij}-{\bf A}^{\bf*ik}-{\bf A}^{*{\bf jk}})\\
 &=\frac{1}{4}(3{\bf U}V-{\bf U}^{*{\bf ij}}{\bf V}^{*{\bf ij}}-{\bf U}^{*{\bf ik}}{\bf V}^{*{\bf ik}}-{\bf U}^{*{\bf jk}} {\bf V}^{*{\bf jk}} )\\
 &=\frac{1}{4}(3{\bf U},{\bf U}^{*{\bf ij}},{\bf U}^{*{\bf ik}},{\bf U}^{*{\bf jk}})\left(
                                                                                                         \begin{array}{c}
                                                                                                           {\bf V}   \\
                                                                                                          -{\bf V}^{*{\bf ij}}   \\
                                                                                                          - {\bf V}^{*{\bf ik}}   \\
                                                                                                          - {\bf V}^{*{\bf jk}}   \\
                                                                                                         \end{array}
                                                                                                       \right).
 \end{align*} Then  $r\leq \rank({\bf A}_{\text{p}})\leq 4r.$
This completes the proof. \qed

\end{document}